\documentclass[a4paper,11pt]{amsart}
\usepackage{amsfonts}
 \usepackage{enumerate}
\usepackage{amsmath, amsthm}
\usepackage{amscd}
\usepackage{amssymb}
\input xy
\xyoption{all}
\usepackage{latexsym,graphicx}
\usepackage[latin1]{inputenc}
\usepackage[cyr]{aeguill}
\usepackage{xspace}
\usepackage{array}
\usepackage{newclude}
\usepackage{hyperref}
\hypersetup{pdfpagelabels,
                  plainpages=false,
                    colorlinks=true,       
                    linkcolor=red}
\renewcommand*{\HyperDestNameFilter}[1]{\jobname-#1} 
\numberwithin{equation}{section}
\usepackage[centering]{geometry}

\usepackage[nameinlink]{cleveref}

\newcommand{\sspace}{\vspace{0.25cm}}

 \theoremstyle{plain}
\newtheorem{theor}{Theorem}[section]

\newtheorem{prop}[theor]{Proposition}
\newtheorem{lem}[theor]{Lemma}
\newtheorem{sublem}[theor]{Sublemma}

\theoremstyle{remark}
\newtheorem{rem}[theor]{Remark}
\newtheorem{rems}[theor]{Remarks}
\newtheorem{Example}[theor]{Example}

\theoremstyle{plain}
\newtheorem{defi}[theor]{Definition}

\numberwithin{equation}{section}

\newcommand{\CC}{{\mathbb C}}
\newcommand{\RR}{{\mathbb R}}
\renewcommand{\SS}{{\mathbf S}}
\newcommand{\QQ}{{\mathbb Q}}
\newcommand{\FF}{{\mathcal{ F}}}
\newcommand{\ZZ}{{\mathbb Z}}
\newcommand{\VV}{{\mathbb V}}

\newcommand{\G}{{\mathbf G}}
\newcommand{\HH}{{\mathbf H}}

\newcommand{\LL}{{\mathbf L}}

\newcommand{\PP}{{\mathbf P}}

\newcommand{\NN}{{\mathbb N}}

\newcommand{\Ga}{\Gamma}

\newcommand{\Gr}{{\textnormal{Gr}}}
\newcommand{\HL}{\textnormal{HL}}

\newcommand{\ti}[1]{\mbox{$\tilde{#1} $}}

\newcommand{\ol}{\overline}

\newcommand{\lo}{\longrightarrow}

\newcommand{\End}{{\rm End}\,}

\newcommand{\ad}{{\rm ad}}

\newcommand{\GL}{{\rm \bf GL}}

\newcommand{\MT}{{\rm \bf MT}}

\newcommand{\alg}{\textnormal{alg}}

\newcommand{\an}{\textnormal{an}}

\newcommand{\bH}{{\mathbf H}}

\def\Fp{\mathfrak{p}}
\def\FS{\mathfrak{S}}

\def\FH{\mathfrak{H}}

\def\Fs{\mathfrak{s}}
\def\Fg{\mathfrak{g}}
\def\Fh{\mathfrak{h}}

\def\Fn{\mathfrak{n}}

\newcommand{\cF}{\mathcal{F}}
\newcommand{\cA}{{\mathcal A}}

\newcommand{\cD}{{\mathcal D}}

\newcommand{\cR}{{\mathcal R}}
\newcommand{\cS}{{\mathcal S}}

\newcommand{\bP}{\mathbf P}
\newcommand{\bQ}{\mathbf Q}
\newcommand{\bL}{\mathbf L}
\newcommand{\bN}{\mathbf N}
\newcommand{\bM}{\mathbf M}
\newcommand{\bS}{\mathbf S}
\newcommand{\bz}{\mathbf z}
\newcommand{\br}{\mathbf r}
\newcommand{\bx}{\mathbf x}
\newcommand{\by}{\mathbf y}
\newcommand{\bn}{\mathbf n}
\newcommand{\bj}{\mathbf j}
\newcommand{\bss}{\mathbf s}

\newcommand{\oQ}{\overline{\QQ}}

\newcommand{\SL}{{\mathbf{SL}}}

\newcommand{\Hod}{\textnormal{Hod}}
\newcommand{\Reel}{\textnormal{Re}\,}
\newcommand{\Ima}{\textnormal{Im}\,}
\newcommand{\Ad}{\textnormal{Ad}\,}


\begin{document}
\title{Tame topology of arithmetic quotients and algebraicity of Hodge
loci}
\author{B. Klingler}
\thanks{This work was partially supported by an Einstein Foundation's
  professorship}

\begin{abstract}
We prove that the uniformizing map of any arithmetic quotient, as well
as the period map associated to any pure
polarized $\ZZ$-variation of Hodge structure $\VV$ on a smooth complex
quasi-projective variety $S$, are topologically
tame. As an easy corollary of these results and of Peterzil-Starchenko's
o-minimal GAGA theorem we obtain that the Hodge locus of
$(S, \VV)$ is a countable union of algebraic subvarieties of $S$ (a
result originally due to Cattani-Deligne-Kaplan).
\end{abstract}

\maketitle

\section{Introduction.}

\subsection{Arithmetic quotients}
Arithmetic quotients are real analytic manifolds of the form
$S_{\Gamma,G, M}:= \Gamma \backslash G/M$, for $\G$ a connected
semi-simple linear algebraic $\QQ$-group, $G:= \G(\RR)^+$ the real Lie group
connected component of the identity of $\G(\RR)$, $M \subset G$ a
connected compact subgroup and $\Gamma \subset \G(\QQ)^+:= \G(\QQ) \cap G$
a neat arithmetic group. By a morphism of arithmetic quotients
we mean the real analytic map $f: S_{\Gamma', G', M'} \lo S_{\Gamma,G, M}$
deduced from a morphism $f: \G' \lo \G$ of semi-simple linear algebraic
$\QQ$-group such that $f(M') \subset M$ and $f(\Ga') \subset \Ga$.

Such quotients are ubiquitous in various parts of
mathematics. For $M= \{1\}$ the arithmetic quotients $
S_{\Gamma, G, \{1\}}= \Gamma \backslash G$ are the main players in homogeneous dynamics, for example Ratner's
theory \cite{Rat0}, \cite{Rat1}. For $K\subset G$ a maximal compact subgroup the
arithmetic quotients $S_{\Gamma, G, K}$ are the arithmetic locally symmetric
spaces, for instance the arithmetic hyperbolic manifolds $\Gamma
\backslash SO(n,1)^+/SO(n)$. They are intensively studied by differential geometers and group
theorists. When $G$ is moreover of Hermitian type then $S_{\Gamma, G,K}$ is a so-called 
arithmetic variety (a connected Shimura variety if $\Gamma$ is a
congruence subgroup), a smooth complex quasi-projective variety
(naturally defined over $\oQ$ in the Shimura case). The simplest
examples of connected Shimura varieties are the modular
curves $\Gamma_0(N) \backslash SL(2, \RR)/ SO(2)$. Shimura varieties play a paramount role in
arithmetic geometry and the Langlands program. 

\subsection{Moderate geometry of arithmetic
  quotients} \label{tameness}

For $S_{\Ga, G, K}$ a connected Shimura variety, the study of the topological 
tameness properties of the uniformization map $\pi: G/M \lo S_{\Ga, G,
K}$ recently provided a crucial tool for solving longstanding
algebraic and arithmetic questions  (see \cite{Pil}, \cite{PT},
\cite{KUY}, \cite{Tsi}, \cite{KUY18},
\cite{MPT}). Here tameness is understood in the sense 
proposed by Grothendieck \cite[\textsection 5 and 6]{Gro} and
developed by model theory under the name ``o-minimal structure'' (see below). The first goal of this paper is to develop a similar
study for a general arithmetic quotient $S_{\Ga, G, M}$. In particular {\em from now on we
assume in this section that $\Ga$ is not cocompact in $G$} (the results are trivially
true if $\Ga$ is cocompact).

Among real analytic manifolds the ones with the tamest geometry
at infinity are certainly the complex algebraic ones, as they are
simply described as the zero locus of finitely many polynomial
equations with complex coefficients. However most arithmetic quotients have no complex algebraic 
structures, as they do not even admit a complex analytic one (for
instance for obvious dimensional reasons). What about a semi-algebraic
structure? One can show that any
arithmetic quotient $S_{\Ga, G, M}$ is real-analytically equivalent to a non-singular semi-algebraic set. On the other hand such 
abstract semi-algebraic models are useless if they don't satisfy some basic
functorial properties. A crucial feature of the geometry of arithmetic quotients is the
existence of infinitely many real-analytic finite self-correspondences: any
element $g \in \G(\QQ)$ commensurates $\Gamma$ (meaning that the
intersection $g\Gamma g^{-1} \cap \Gamma$ is of finite index in both
$\Gamma$ and $g\Gamma g^{-1}$) hence defines a Hecke correspondence 
\begin{equation} \label{Hecke}
\xymatrix{
 c_g = (c_1, c_2) :& S_{\Ga, G, M} &S_{g^{-1}\Gamma g \cap \Gamma, G, M} \ar[l]^{c_1} 
\ar[r]^{g \cdot} \ar@/_1pc/[rr]_{c_2} & S_{\Gamma \cap g \Gamma g^{-1}, G, M}
\ar[r] &S_{\Ga, G, M}}\;\;,
\end{equation}
where the extreme maps are the natural finite \'etale projections. We
would like these Hecke correspondences to
be real algebraic. Such functorial real algebraic models do exist in certain cases: see
\cite{Jaf75}, \cite{Jaf78}, \cite{Le79}; but I don't know of any general procedure for producing
such a nice semi-algebraic structure on all arithmetic quotients. Hence our
need to work in a more general notion of tame geometry.

Recall that a structure in the sense of
model theory is a collection $\cS= (S_n)_{n \in \NN^*}$,
where $S_n$ is a set of subsets of $\RR^n$ (called the {\em $\cS$-definable
sets}), such that: all algebraic subsets of $\RR^n$ are in $S_n$; $S_n$ is  a boolean subalgebra of the power set of $\RR^n$;
if $A\in S_n$ and $B \in S_m$ then $A \times B \in
S_{n+m}$; if $p: \RR^{n+1} \lo \RR^n$ is a linear projection and $A
\in S_{n+1}$ then $p(A) \in S_n$. A function $f: \RR^n \lo \RR^m$ is
said to be $\cS$-definable if its graph 
is $\cS$-definable. A structure $\cS$ is said to be {\em o-minimal} if the definable subsets of
$\RR$ are precisely the finite unions of points and intervals
(i.e. the semi-algebraic subsets of $\RR)$. This o-minimal axiom guarantees the possibility of doing
geometry using definable sets as basic blocks: it excludes infinite
countable sets, like $\ZZ \subset \RR$, as well as Cantor sets or
space-filling curves, to be
definable. Intuitively, subsets of
$\RR^n$ definable in an o-minimal structure are the ones having at
the same time a reasonable local topology and a tame topology at
infinity. Given an o-minimal structure $\cS$, there is an obvious
notion of $\cS$-definable manifold: this is a manifold $S$ admitting a {\em
  finite} atlas of charts $\varphi_i: U_i \lo \RR^n$, $i \in I$, such
that the intersections $\varphi_i(U_i \cap U_j)$, $i, j \in I$, are
$\cS$-definable subset of $\RR^n$ and the change of coordinates
$\varphi_i \circ \varphi_j^{-1}: \varphi_j(U_i \cap U_j) \lo
\varphi_i(U_i \cap U_j)$ are $\cS$-definable maps. 

The simplest o-minimal structure is $\RR_{\alg}$, the definable sets being the
semi-algebraic subsets. Fortunately there exist more general o-minimal structures. A result of Van den
Dries based on Gabrielov's results \cite{Gabrielov} shows that the
structure $$\RR_\an:=\langle \RR,\, +, \,\times,\, <, \{f\} \, 
\textnormal{for} \, f \,\textnormal{restricted analytic function}
\rangle$$ generated from $\RR_\alg$ by adding the restricted analytic
functions is o-minimal. Here a real function on $\RR^n$ is restricted analytic
if it is zero outside $[0,1]^n$ and coincides on $[0,1]^n$ with a real analytic function
$g$ defined on a neighbourhood of $[0,1]^n$. The $\RR_\an$-definable
sets of $\RR^n$ are the globally subanalytic subsets of $\RR^n$
(i.e. the ones which are subanalytic in the compactification $\bP^n
\RR$ of $\RR^n$). A deep result of Wilkie
\cite{Wil} states that the structure $\RR_{\exp}:= \langle \RR, +,
\times, <, \exp: \RR \lo \RR \rangle$ generated from $\RR_{\alg}$ by making the real
exponential function definable is also o-minimal. Finally the
structure $\RR_{\an, \exp}:= \langle \RR,\, +, \,\times,\, <,
\,\exp, \{f\} \, \textnormal{for} \,  f \,\textnormal{restricted
  analytic function} \rangle$ 
generated by $\RR_{\an}$ and $\RR_{\exp}$ is still o-minimal \cite{VdM}.

The first result of this paper is the following:
\begin{theor}
\label{definability}
Let $\G$ be a connected linear semi-simple algebraic $\QQ$-group,
$\Gamma \subset \G(\QQ)^+$  a torsion-free arithmetic lattice 
 of $G:= \G(\RR)^+$, and $M \subset G$ a connected compact subgroup.
\begin{itemize}
\item[(1)]
The quotient $S_{\Gamma, G, M} := \Gamma \backslash G/M$ has a natural structure of
$\RR_{\an}$-definable manifold, induced from the real-analytic
structure with corners of its Borel-Serre compactification
$\overline{S_{\Ga, G, M}}^{BS}$. 
\item[(2)] \label{unif} Let $G/M$ be endowed with its natural semi-algebraic
  structure (see \Cref{G/M semi}) and $\FS \subset G/M$ be a
  semi-algebraic Siegel set (see \Cref{Siegel sets} for the definition
  of Siegel sets). Then 
$$\pi_{|\FS}: \FS  \lo  S_{\Gamma,G, M} $$ is $\RR_{\an}$-definable (where $S_{\Gamma,G, M}$ is endowed
with its $\RR_\an$-structure defined in $(1)$). 

In particular, there exists a semi-algebraic fundamental set $\FF \subset G/M$ for the
  action of $\Gamma$ on $G/M$ such that 
$$\pi_{|\FF}: \FF \lo S_{\Gamma,G, M} $$ is $\RR_{\an}$-definable.

\item[(3)] Any morphism $f: S_{\Ga', G', M'} \lo S_{\Ga, G, M}$ of
  arithmetic quotients is $\RR_\an$-definable. In particular the Hecke
  correspondences $c_g$, $g \in \G(\QQ)^+$, on $S_{\Ga, G, M}$ are $\RR_{\an}$-definable. 
\end{itemize}
\end{theor}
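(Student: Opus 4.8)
The plan is to deduce all three parts from the single key input that the Borel–Serre compactification $\overline{S_{\Ga, G, M}}^{BS}$ is a compact real-analytic manifold with corners whose interior is $S_{\Ga,G,M}$, together with the fact that the structure $\RR_{\an}$ is precisely the one in which globally subanalytic sets are definable and compact real-analytic manifolds with corners are canonically objects. For part (1), I would cover $\overline{S_{\Ga, G, M}}^{BS}$ by finitely many charts $\psi_i\colon V_i \to [0,1)^{k_i}\times \RR^{n-k_i}$ coming from the corner structure; since the manifold is compact the atlas is finite, and the transition maps, being real-analytic on a neighborhood of compact pieces, are restricted analytic, hence $\RR_{\an}$-definable. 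Restricting each chart to $U_i := V_i \cap S_{\Ga,G,M}$ and noting that $\psi_i(U_i\cap U_j)$ is an open subanalytic subset of Euclidean space gives the desired finite $\RR_{\an}$-definable atlas on $S_{\Ga,G,M}$, and one checks this does not depend on the chosen charts up to $\RR_{\an}$-definable isomorphism.

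For part (2), the strategy is reduction theory: a Siegel set $\FS\subset G/M$ maps with finite fibers onto $S_{\Ga,G,M}$, and the Borel–Serre construction is built so that the closure $\overline{\FS}$ inside $\overline{S_{\Ga,G,M}}^{BS}$ is covered by finitely many of the corner charts $\psi_i$. In those charts the map $\pi$ is described explicitly by the real-analytic "rational canonical" coordinates attached to the $\QQ$-parabolic subgroups (the exponential-of-root-coordinates on the unipotent radical, the split torus coordinates $t\mapsto t^\alpha$ which extend analytically to the boundary $t=0$, and the bounded symmetric-space/compact-fiber coordinates). Each of these is a semi-algebraic or restricted-analytic function on the relevant compact box, so $\pi_{|\FS}$, being a composition of an $\RR_{\an}$-definable map into $\overline{S_{\Ga,G,M}}^{BS}$ with the chart inclusions of (1), is $\RR_{\an}$-definable; passing from a single Siegel set to a finite union of $\Ga$-translates yields the fundamental set $\FF$. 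The key point to verify carefully is that the semi-algebraic structure on $G/M$ of \Cref{G/M semi} and that on the Siegel set $\FS$ are compatible with the real-analytic corner charts — i.e. that the change-of-coordinates between "Siegel/Iwasawa coordinates" and "Borel–Serre corner coordinates" is $\RR_{\an}$-definable; this is really the heart of the matter.

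For part (3), given a morphism $f\colon \G'\to \G$ of $\QQ$-groups with $f(M')\subset M$ and $f(\Ga')\subset\Ga$, the induced map on quotients extends to a morphism $\overline{f}\colon \overline{S_{\Ga',G',M'}}^{BS}\to \overline{S_{\Ga,G,M}}^{BS}$ of manifolds with corners by the functoriality of the Borel–Serre compactification; reading $\overline{f}$ in the finite corner atlases from (1) expresses it by restricted-analytic functions, hence $f$ is $\RR_{\an}$-definable. For the Hecke correspondences $c_g$ one applies this to the two finite étale projections $c_1,c_2$ in diagram \eqref{Hecke}: each is a morphism of arithmetic quotients in the stated sense (with $\G'=\G$ and the lattice $g^{-1}\Ga g\cap \Ga$), so each is $\RR_{\an}$-definable, and a correspondence given by an $\RR_{\an}$-definable pair of maps out of an $\RR_{\an}$-definable manifold is $\RR_{\an}$-definable.

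I expect the main obstacle to be the explicit-charts compatibility in part (2): one must show that the semi-algebraic description of Siegel sets inside $G/M$ matches, under $\pi$, the real-analytic-with-corners charts of the Borel–Serre space in an $\RR_{\an}$-definable way. This requires knowing that the boundary degeneration of the geodesic action / horospherical coordinates is governed by monomial maps $t\mapsto t^\alpha$ in the torus variable (which extend analytically across $t=0$) — a statement that is true but whose bookkeeping over all $\QQ$-parabolics, with the compact factor $M$ possibly non-maximal, is where the real work lies.
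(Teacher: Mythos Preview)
Your proposals for parts (1) and (2) are essentially correct and match the paper's argument: part (1) is exactly \Cref{ManifoldWC} applied to $\overline{S_{\Ga,G,M}}^{BS}$, and part (2) is precisely the factorisation $\FS_i \xrightarrow{1_{U_i}\times e_{P_i}\times 1_{W_i}} \overline{\FS_i} \xrightarrow{\overline{\pi_i}} \overline{V_i}$, where the first map is semi-algebraic (the torus coordinates $a\mapsto a^{-\alpha}$) and the second is real-analytic between compact sets. Your description of the ``main obstacle'' is accurate and is exactly what the paper checks.

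Part (3), however, has a genuine gap. You assert that $f$ extends to a map $\overline{f}\colon \overline{S_{\Ga',G',M'}}^{BS}\to \overline{S_{\Ga,G,M}}^{BS}$ ``by the functoriality of the Borel--Serre compactification''. This is false: Borel--Serre compactification is \emph{not} functorial for morphisms of arithmetic quotients coming from a strict inclusion $\G'\subsetneq\G$. The problem is that two $\QQ$-parabolic subgroups $\bP_1,\bP_2\subset\G$ may be non-conjugate under $\Ga$ while their intersections $\bP_i\cap\G'$ are $\Ga'$-conjugate parabolics of $\G'$; the map $f$ then does not even extend continuously to the boundary. Your argument does go through when $\G'=\G$ (this is a lemma of Goresky--MacPherson), and that case already covers Hecke correspondences, so your ``in particular'' clause is fine.

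For the general case the paper replaces your non-existent $\overline{f}$ by the following: take the finite Siegel atlas $(\FS'_i)$ on the source and apply Orr's theorem (\Cref{orr}), which says that any Siegel set for $\G'\subset\G$ is contained in $C\cdot\FS$ for some finite $C\subset\G(\QQ)$ and some Siegel set $\FS$ for $\G$. Then $\pi\circ f_{|\FS'_i}$ factors through the semi-algebraic inclusion $\FS'_i\hookrightarrow C_i\cdot\FS_i$ followed by $\pi_{|C_i\cdot\FS_i}$, and the latter is $\RR_{\an}$-definable by part (2). This finiteness result of Orr is the missing ingredient in your proposal.
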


\begin{rems}
\begin{itemize}
\item[(1)] \Cref{definability}(1) is a corollary of general facts
  about compact real analytic manifolds with corners and Borel-Serre
  theory: see \Cref{general}. 
\item[(2)] Siegel sets are the main tool in the understanding of the
  Borel-Serre compactification $\overline{S_{\Ga, G, M}}^{BS}$, and
  \Cref{definability}(2) is the crucial ingredient for studying
  arithmetic quotients via o-minimal techniques.
\item[(3)] Borel-Serre compactification is not functorial: a morphism $f: S_{\Ga', G', M'} \lo S_{\Ga, G, M}$ of
  arithmetic quotients does not in general extend to a morphism of real analytic
  manifolds with corners between $\ol{S_{\Ga', G', M'}}^{BS}$ and
  $\ol{S_{\Ga, G, M}}^{BS}$. Hence \Cref{definability}(3) is
  non-trivial but follows from a finiteness result for Siegel sets due
  to Orr (see \Cref{orr}).
\end{itemize}
\end{rems}

When $S_{\Gamma, G, K}$ is an arithmetic variety, the
main result of \cite{PetStar} (for $S_{\Gamma, G,
  K} = \cA_g$ the moduli space of principally polarized Abelian
varieties of dimension $g$) and \cite[Theor.1.9]{KUY} (in general)
states that the map $\pi_{|\FF}: \FF \lo S_{\Gamma,G, K}$ is $\RR_{\an, \exp}$-definable
when the arithmetic variety $S_{\Gamma, G, K}$ is endowed with the
$\RR_{\an}$-definable manifold structure deduced from its Baily-Borel
compactification. \Cref{definability}(iii) can be thought as a generalisation
of this result, as one can prove that the two
$\RR_\an$-definable structures on $S_{\Gamma, G, K}$ given by the
Borel-Serre compactification on the one hand and the Baily-Borel
compactification on the other hand do not coincide but do define the
same $\RR_{\an, \exp}$-definable structure. As we won't need this
result in this paper we just provide the simplest illustration:

\begin{Example} \label{Ex}
Let $\FH$ be the Poincar\'e upper half-plane and $Y_0(1)$ the modular
curve $\SL(2, \ZZ) \backslash \FH$. A semi-algebraic fundamental domain for the
action of $\SL(2, \ZZ)$ on $\FH$ is given by $$\cF:=\{ (x, y) \in \FH\; | \; x^2 + y^2 \geq 1, -1/2 < x< 1/2\}\;\;.$$
The Borel-Serre compactification $\ol{Y_0(1)}^{BS}$ is obtained by adding a circle at
infinity to $Y_0(1)$, corresponding to the compactification $\ol{\cF}$
of $\cF$ obtained by glueing the segment $\{ y= \infty, -1/2 < x<
1/2\}$ to $\cF$. The Baily-Borel compactification $X_0(1):= \ol{Y_0(1)}^{BB}$
is the one-point compactification of $Y_0(1)$ and is naturally
identified with the complex projective line $\bP^1\CC$. The natural map
$\ol{Y_0(1)}^{BS} \lo \ol{Y_0(1)}^{BB}$ contracting the circle at
infinity to a point sends a point $(x, t= 1/y) \in [-1/2, 1/2] \times [0, 1)$
close to the circle at infinity $t=0$ to the point $[1, z= \exp(2 \pi i x) \exp (-2
\pi/t)] \in \bP^1\CC$. This map is not globally subanalytic but it is
$\RR_{\an, \exp}$-definable.
\end{Example}

It is worth noticing that the proof of the general
\Cref{definability}(iii) is much easier than the one in \cite{PetStar} (which uses explicit theta
functions) or the one in \cite{KUY} (which relies in a fundamental
way on the delicate toroidal compactifications \cite{AMRT}). Indeed
these proofs, which apply only to arithmetic varieties, 
insist on using complex analytic maps, which obscure to some
extent the o-minimality issues.

\subsection{Moderate geometry of period mappings} Arithmetic quotients
of interest to the algebraic geometers arise as quotients of period
domains 
(or more generally Mumford-Tate domains) in Hodge theory. Let $S$ be a
smooth complex quasi-projective variety and $\VV \lo S$ a polarized
variation of $\ZZ$-Hodge structures (PVHS) of weight $k$ on $S$. A typical example of such a
PVHS is $\VV = R^kf_*\ZZ$ for $f: \mathcal{X} \lo
S$ a smooth proper morphism; in which case we say that $\VV$ is geometric.
We refer to \cite{K17} and references therein for the
relevant background in Hodge theory, which we use thereafter. 
Let $\MT(\VV)$ be the generic Mumford-Tate group
associated to $\VV$ (this is a connected reductive $\QQ$-group) and $\G$
its associated adjoint semi-simple $\QQ$-group. The group $G:= \G(\RR)^+$ acts by
holomorphic transformations and transitively on the Mumford-Tate
domain $\cD=G/M$ associated to $\MT(\VV)$, with compact isotropy denoted
by $M$. If $\Ga$ is a torsion free arithmetic quotient of $G$ the
quotient $S_{\Ga, G, M}$ is a complex analytic manifold (which carries
an algebraic structure in only very few cases). Replacing if necessary $S$ by a finite \'etale cover, the PVHS
$\VV$ on $S$ is completely described by its holomorphic period map
$\Phi_S: S \lo \Hod^0(S, \VV)$, where the connected Hodge variety $\Hod^0(S,
\VV)$ associated to the pair $(S, \VV)$ is an arithmetic quotient $S_{\Ga, G, M}$ for a suitable
torsion-free arithmetic subgroup $\Gamma \subset G$, admitting a
natural complex analytic structure but usually no complex algebraic structure (notice that here and in the
rest of the text we use the same symbol $S$ for denoting a complex
algebraic variety and its associated complex analytic space).

We prove that period maps have a moderate
geometry:

\begin{theor} \label{definability period map}
Let $\VV \rightarrow S$ be a polarized variation of pure Hodge
structures of weight $k$ over a smooth complex quasi-projective variety $S$.
Let $\Phi_S: S \lo \Hod^0(S, \VV):= S_{\Ga, G, M}$ be the horizontal
holomorphic period map associated to $\VV$. 

Then $\Phi_S$ is $\RR_{\an, \exp}$-definable (where we endow $S$ with its canonical
$\RR_{\alg}$-definable structure coming from its algebraic structure and $S_{\Ga, G, M}$ with its $\RR_{\an}$-definable
structure defined in \Cref{definability}(1)). 
\end{theor}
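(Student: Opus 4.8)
The plan is to reduce the global definability statement to a local one near the boundary of a compactification of $S$, and then to use the asymptotic description of period maps (the nilpotent orbit theorem and Schmid's $\mathrm{SL}_2$-orbit theorem) to control the local behaviour. First I would choose a smooth projective compactification $\overline{S}$ of $S$ in which the boundary $D = \overline{S}\setminus S$ is a normal crossing divisor; since $S$ is algebraic, its $\RR_{\alg}$-definable structure is the one coming from $\overline{S}$ and a covering by finitely many charts of the form $(\Delta^*)^p\times\Delta^q$, so by the finiteness of the atlas it suffices to prove that $\Phi_S$ restricted to each such chart is $\RR_{\an,\exp}$-definable. On such a polydisc chart, after passing to a finite \'etale cover of the punctured polydisc (which is harmless, since $\RR_{\an,\exp}$-definability can be checked after such a cover and $\Phi_S$ was already only defined after replacing $S$ by a finite \'etale cover), the local monodromy operators $T_1,\dots,T_p$ around the boundary components are unipotent, with logarithms $N_i$.

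Next I would use the nilpotent orbit theorem. Lifting $\Phi_S$ to the universal cover $\HH^p\times\Delta^q$ of the chart (with coordinates $z_j$, $w_k$, $q_j = \exp(2\pi i z_j)$), the period map takes the form $\Phi_S(z,w) = \exp\big(\sum_j z_j N_j\big)\cdot\psi(q,w)$, where $\psi$ extends holomorphically over $q=0$ with values in the compact dual $\check{\cD}$. Composing with the projection $\pi\colon \cD \to S_{\Ga,G,M}$, the map $\Phi_S$ on $(\Delta^*)^p\times\Delta^q$ is thus $\pi\circ\big(\exp(\sum_j z_j N_j)\cdot\psi(q,w)\big)$. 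The holomorphic map $\psi$, being holomorphic on a relatively compact polydisc, is $\RR_{\an}$-definable (it is the restriction of an analytic map, hence globally subanalytic), and $q_j = \exp(2\pi i z_j)$ together with the matrix exponential $\exp(\sum_j z_j N_j)$ — a polynomial in the $z_j$ because the $N_j$ are nilpotent — introduces precisely the $\exp$ needed, so the $\cD$-valued map $(z,w)\mapsto \exp(\sum_j z_j N_j)\cdot\psi(q,w)$, viewed on a fundamental set for the translation action $z_j \mapsto z_j + 1$ (a vertical strip, which is semi-algebraic in $\HH$), is $\RR_{\an,\exp}$-definable into $\cD = G/M$ with its semi-algebraic structure.

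The crux is then to pass from $\cD$ to the quotient $S_{\Ga,G,M}$ definably, i.e. to show that the composite lands in a region where $\pi$ is $\RR_{\an,\exp}$-definable. Here I would invoke \Cref{definability}(2): $\pi$ restricted to a semi-algebraic Siegel set $\FS$ (or a fundamental set $\FF$ built from finitely many Siegel sets) is $\RR_{\an}$-definable, and any set of the above form, after multiplying the strip fundamental set by finitely many Weyl-type elements, can be taken inside $\FF$. The essential input making this work near the boundary is Schmid's $\mathrm{SL}_2$-orbit theorem together with its multivariable refinement (Cattani-Kaplan-Schmid): as $\Imm z_j \to \infty$ the orbit $\exp(\sum z_j N_j)\cdot\psi(q,w)$ stays, up to the $\Ga$-action, within a fixed Siegel set attached to the rational parabolic $\QQ$-subgroup determined by the monodromy weight filtration of $\sum N_j$. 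Thus after translating by a fixed finite set of elements of $\G(\QQ)^+$ and possibly shrinking the polydisc, the lifted period map has image in a single semi-algebraic Siegel set $\FS$, and $\Phi_S = \pi_{|\FS}\circ(\text{the }\RR_{\an,\exp}\text{-definable lift})$ is $\RR_{\an,\exp}$-definable on that chart. Covering $S$ by finitely many such charts and the interior (where $\Phi_S$ is real-analytic on a relatively compact set, hence trivially $\RR_{\an}$-definable) and patching, we conclude.

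The main obstacle I anticipate is exactly this last reduction: making precise, and uniform in finitely many boundary charts, that the nilpotent-orbit approximation of $\Phi_S$ stays within a \emph{single} semi-algebraic Siegel set (after a fixed finite correction by $\G(\QQ)^+$). This requires the full strength of the several-variable $\mathrm{SL}_2$-orbit theorem to guarantee that the rational parabolic is constant on a neighbourhood of each boundary stratum and that the error term between $\Phi_S$ and its nilpotent orbit is bounded in a way compatible with Siegel-set coordinates; once the image is confined to $\FS$, the definability is formal from \Cref{definability}(2) and the structure of $\RR_{\an,\exp}$. A secondary subtlety is checking that the two $\RR_{\an}$-structures on $S_{\Ga,G,M}$ (Borel-Serre vs. the one implicitly used via Siegel sets) are compatible for this argument, but this is handled by \Cref{definability} itself, which phrases everything in terms of the Borel-Serre $\RR_{\an}$-structure and semi-algebraic Siegel sets simultaneously.
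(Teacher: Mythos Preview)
Your overall strategy coincides with the paper's: reduce to a punctured polydisc chart, pass to unipotent monodromy, use the nilpotent orbit theorem to write the lifted period map on a strip fundamental set as an $\RR_{\an,\exp}$-definable map into $G/M$, and then invoke \Cref{definability}(2) once the image is confined to finitely many Siegel sets. The paper packages this confinement as \Cref{finitely many Siegel} and deduces \Cref{definability period map} exactly as you describe.

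There is, however, one concrete inaccuracy at the point you yourself flag as the crux. You assert that the image of the lifted period map lies, up to a finite $\G(\QQ)^+$-translation, in a \emph{single} Siegel set attached to ``the rational parabolic determined by the monodromy weight filtration of $\sum N_j$''. This is false for $n>1$: the paper explicitly records (after the statement of \Cref{finitely many Siegel}, citing \cite{GGLR}) that one Siegel set does \emph{not} suffice in the multivariable case. The correct statement, and what the paper proves, is that finitely many Siegel sets are needed, one for each ordering $\sigma$ of the $n$ variables. On the sector $C_{\sigma,\eta,1}=\{y_{\sigma(1)}\ge y_{\sigma(2)}\ge\cdots\ge y_{\sigma(n)}\ge\eta\}$ the multivariable $SL_2$-orbit theorem of Cattani--Kaplan--Schmid produces commuting $\mathfrak{sl}_2$-triples whose semisimple parts grade a \emph{flag} of weight filtrations $W^{\mathbf 1},\ldots,W^{\mathbf n}$ (not just $W(\sum N_j)=W^{\mathbf n}$), and the relevant rational parabolic is built from the full flag via the split torus spanned by the $\hat Y_{\mathbf r}$; this parabolic genuinely depends on $\sigma$. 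Since the strip fundamental set decomposes into the $n!$ sectors, you get $n!$ Siegel sets, which is all that is needed for the definability argument. So your outline is right in spirit, but ``the parabolic of $W(\sum N_j)$'' is the wrong parabolic and ``single Siegel set'' should be ``finitely many Siegel sets, one per ordering''; with that correction your sketch matches the paper's proof.
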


\begin{rems}
\begin{itemize}
\item[(1)] Notice that \Cref{definability period map} is easy in the
  rare case when the connected
  Hodge variety $S_{\Ga, G, M}$ is compact. In that case, consider $\ol{S}$ a
  smooth projective compactification of $S$ with normal crossing divisor at
infinity. It follows from Borel's monodromy theorem \cite[Lemma
(4.5)]{Schmid} and the fact that the cocompact lattice $\Ga$ does not
contain any unipotent element \cite[Cor. 11.13]{Rag72} that the
monodromy at infinity of $\VV$ is finite. Thus, replacing if
necessary $S$ by a finite \'etale cover, the PVHS $\VV$ extends to
$\ol{S}$. Equivalently the period map
$\Phi_S: S \lo \Hod^0(S, \VV):= S_{\Ga, G, M}$ extends to a
period map $\Phi_{\ol{S}}: \ol{S} \lo S_{\Ga, G, M}$. In particular
the period map 
$\Phi$ is definable in $\RR_\an$ in that case.

\item[(2)] When the connected Hodge variety $S_{\Ga, G, M}$ is a
  connected Shimura variety, \Cref{definability period map} implies
  that $\Phi_S: S \lo S_{\Ga, G, M}$ is an algebraic map (see \Cref{algebraic}), a classical
  result due to Borel \cite[Theor. 3.10]{Bor72}. Hence \Cref{definability period map}
  can be thought as an extension of Borel's result to the general
  case where the connected Hodge variety $S_{\Ga, G, M}$ has no
  algebraic structure. On the other hand, notice that Borel
  \cite[Theor.A]{Bor72} proves in the Shimura case the
  stronger result that $\Phi_S$ extends to a holomorphic map $\Phi_{\ol{S}}: \ol{S} \lo \ol{S_{\Ga, G,
      M}}^{BB}$ (where, as above, $\ol{S_{\Ga, G,
      M}}^{BB}$ denotes the projective Baily-Borel compactification of the
  Shimura variety $S_{\Ga, G,M}$). 

\item[(3)] A long standing conjecture of Griffiths, whose proof has recently been
announced in \cite{GGLR}, states that $\Phi(S)$ admits a natural
completion $\ol{\Phi(S)}$ as a projective algebraic variety (see
\cite{Sommese} for earlier results in this direction) and that the map
$\Phi_S: S \lo \Phi(S)$ extends to an algebraic map $\Phi_{\ol{S}}:
\ol{S} \lo \ol{\Phi(S)}$. This implies that $\Phi(S)$ has a tame
topology, but says nothing about the tameness of $\Phi_S: S \lo
S_{\Ga, G, M}$, as the relation
between the projective compactification $\ol{\Phi(S)}$ of \cite{GGLR} and the Hodge
variety $S_{\Ga, G,M}$ is far from clear. Hence \Cref{definability
  period map} seems to go in a direction different from the one
followed in \cite{GGLR}.
\end{itemize}
\end{rems}

The main ingredient in the proof of \Cref{definability period map} is
the following finiteness result on the geometry of Siegel sets (we
refer to \Cref{Siegel sets} for the precise
definition of Siegel sets):

\begin{theor} \label{finitely many Siegel} 
Let $\Phi: (\Delta^*)^n \lo S_{\Ga, G, M}$ be a period
map with unipotent monodromy on a product of punctured disks. Let $\tilde{\Phi}: \FH^n 
\lo G/M$ be its lifting to the universal cover $\FH^n$ of $(\Delta^*)^n$. 

For any given constants $R>0$ and $\eta >0$ there exists finitely
many Siegel sets $\FS_i \subset G/M$, $i \in I$, such that
$\tilde{\Phi}(\bz) \in \bigcup_{i \in I}\FS_i$
whenever $|\Reel \bz| \leq C$
and $\Ima \bz \geq \eta$ (where
$|\Reel \bz| := \inf_{1\leq j \leq n} |\Reel z_i|$ and $\Ima \bz := \inf_{1 \leq j \leq n} \Ima z_i$).
\end{theor}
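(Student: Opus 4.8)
The plan is to reduce the statement to the $\SL_2$-orbit theorem / nilpotent orbit theorem of Schmid (in the several-variable version of Cattani--Kaplan--Schmid), which controls the asymptotic behaviour of $\tilde\Phi$ on the region $\{|\Reel \bz|\le C,\ \Ima \bz \ge \eta\}$. Write $N_1,\dots,N_n$ for the logarithms of the (unipotent) local monodromies around the $n$ punctures, and let $N=\sum_j N_j$; by the nilpotent orbit theorem there is a holomorphic map $\psi:\Delta^n\to \check{\cD}$ into the compact dual such that $\tilde\Phi(\bz)=\exp\!\big(\sum_j z_jN_j\big)\cdot\psi(e^{2\pi iz_1},\dots,e^{2\pi iz_n})$, and the ``untwisted'' map stays in a bounded set of $\check\cD$ as $\Ima\bz\to\infty$. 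The point is then that the group element $\exp(\sum_j z_jN_j)$, for $\bz$ in the prescribed region, moves $\psi(\mathbf q)$ through a set which, although unbounded, is swallowed by finitely many Siegel sets.

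First I would set up the relevant parabolic: the monodromy weight filtration of $N$ (relative to the limit Hodge filtration) defines a $\QQ$-parabolic subgroup $\cP\subset\G$, with the $N_j$ lying in the nilradical, or more precisely I would use the grading element/$\SL_2$-datum attached to the cone $\sigma=\sum_{j}\RR_{\ge 0}N_j$ as in Cattani--Kaplan--Schmid to produce a one-parameter family of Siegel sets associated to $\cP$. Using the bounded image of $\psi$ together with the compactness of $M$, I would cover the relevant compact piece of $\check\cD$ by finitely many translates and then invoke Borel's reduction theory: in the fundamental-set language, the image $\exp(\sum z_jN_j)\cdot(\text{bounded set})$ for $\Reel\bz$ bounded and $\Ima\bz$ large lies inside a Siegel set attached to $\cP$ (this is exactly the statement that ``going to the boundary along a rational cone'' is Siegel-bounded). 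The semisimple part of the Siegel-set decomposition is handled by the bounded real parts (the $\omega$-part), the torus part by the unboundedness of $\Ima\bz$ in the direction of $\sigma$, and the unipotent part by the bounded image of $\psi$; because $\sigma$ is a single rational polyhedral cone contained in the closure of one Weyl chamber, only finitely many Siegel sets (one per ``face'' / per choice of which $\Ima z_j$ dominate, together with the finitely many translates covering the bounded part) are needed.

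The main obstacle I expect is precisely the passage from ``$\tilde\Phi(\bz)$ lies in a single explicit unbounded region described by the nilpotent orbit'' to ``this region is covered by \emph{finitely many} Siegel sets'': Siegel sets are cut out by inequalities $\alpha(a)\le t$ on the $A$-coordinate and compactness on the $\omega U$-coordinates, and one must check that as $\Ima\bz$ ranges over $[\eta,\infty)^n$ the corresponding torus elements $a(\bz)=\exp(\text{(stuff involving }\log\Ima z_j))$ stay in the Siegel range for at least one of finitely many Siegel sets, uniformly. The subtlety is that different orderings of the sizes of the $\Ima z_j$ put $a(\bz)$ in different Weyl chambers, so one genuinely gets several Siegel sets, indexed by the (finitely many) orderings, i.e. by the faces of the cone $\sigma$; establishing that finitely many suffice and that their union contains the whole region amounts to a careful but essentially combinatorial bookkeeping on the cone $\sigma$ and the weight filtrations of its faces, combined with the (several-variable) $\SL_2$-orbit theorem's uniform estimates. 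Once that bookkeeping is done, the theorem follows by taking $I$ to be the resulting finite index set.

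Concretely, I would organise the write-up as: (i) recall the nilpotent orbit theorem and fix the $\psi$ with bounded image; (ii) attach to the cone $\sigma$ and its faces the parabolic $\cP$ and the associated $\SL_2$-data, quoting Cattani--Kaplan--Schmid; (iii) using reduction theory for $\Gamma$ in $G$, show that for each face $F$ of $\sigma$ the set $\{\exp(\sum z_jN_j)\cdot K\text{-bounded set}: \Reel\bz\ \text{bdd},\ \Ima z_j\ \text{large exactly for }j\notin F\}$ lies in one Siegel set $\FS_F$; (iv) conclude that $\tilde\Phi$ of the whole prescribed region lies in $\bigcup_F \FS_F$, a finite union, possibly after further subdividing each $\FS_F$ into finitely many pieces to absorb the finitely many translates covering the bounded part of $\check\cD$. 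This is where I would stop, leaving the uniform estimates in (iii) to the detailed proof.
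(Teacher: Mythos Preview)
Your overall strategy matches the paper's: decompose the region $\{|\Reel\bz|\le R,\ \Ima\bz\ge\eta\}$ into finitely many sectors and show that on each sector the image of $\tilde\Phi$ lands in a single Siegel set. The paper does exactly this reduction (to its \Cref{Un Siegel}), and then proves the one-Siegel-set statement on a fixed sector using the full ${SL_2}^n$-orbit theorem of Cattani--Kaplan--Schmid together with the nilpotent orbit theorem, just as you anticipate.

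Two points where your proposal needs sharpening to line up with what actually works. First, the sectors are indexed by \emph{orderings} $\sigma$ of $\{1,\dots,n\}$ (the domains $C_{\sigma,\eta,1}=\{y_{\sigma(1)}\ge y_{\sigma(2)}\ge\cdots\ge y_{\sigma(n)}\ge\eta\}$), not by faces of the cone $\sigma=\sum_j\RR_{\ge 0}N_j$. Your own remark that ``different orderings of the sizes of the $\Ima z_j$ put $a(\bz)$ in different Weyl chambers'' is the correct picture; the face language (``large exactly for $j\notin F$'') describes a different stratification and is not what is used. Second, the parabolic $\bP$ is not the one attached to the single weight filtration $W(N)$ but is built, for a \emph{fixed} ordering $\sigma$, from the commuting semisimple elements $\hat Y_{\br}$ grading the chain of filtrations $W^{\mathbf 1},\dots,W^{\bn}$ produced by the ${SL_2}^n$-orbit theorem; one then needs the rationality argument of Cattani--Deligne--Kaplan to conjugate this data to a $\QQ$-parabolic. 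With that parabolic in hand, the paper checks (your step (iii)) via the precise asymptotic expansion \cite[(4.20)(vii)--(viii)]{CKS} that the nilpotent orbit lies in a Siegel set on $C_{\sigma,\eta,1}$, and then transfers this to $\tilde\Phi$ by the distance estimate from the nilpotent orbit theorem. So your step (iii) is indeed the heart of the matter and does require the detailed ${SL_2}^n$-orbit estimates you defer; but the scaffolding you set up is the right one once the two corrections above are made.
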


In the one-variable case ($n=1$) \Cref{finitely many Siegel} is due to Schmid (see
\cite[Cor. 5.29]{Schmid}), with $|I|=1$. In the multivariable case, Green, Griffiths, Laza and
Robles \cite[Claims A.5.8 and A.5.9]{GGLR} show that the result with
$|I|=1$ does not hold. As in the one-variable case, our proof of \Cref{finitely
  many Siegel} is a corollary of the (multivariable) ${SL_2}^n$-orbit
theorem of Cattani-Kaplan-Schmid \cite[(4.20)]{CKS}. 

It seems it might be possible to use only the simpler $SL_2$-orbit
theorem in one variable to prove \Cref{definability period map}, we hope to come back
to this question in a subsequent work.

\subsection{Algebraicity of Hodge loci}
Recall that the Hodge locus $\HL(S, \VV)
\subset S$ associated to the PVHS $\VV$ is the set of 
points $s$ in $S$ for which exceptional Hodge tensors for $\VV_{s}$ do
occur. The locus $\HL(S, \VV)$ is easily seen to be a countable union
of irreducible complex analytic subvarieties of $S$, called special
subvarieties of $S$ associated to $\VV$. If $\VV = R^kf_*\QQ$ for $f: \mathcal{X}
\lo S$ a smooth proper morphism, it follows from the Hodge
conjecture that the exceptional Hodge tensors in $\VV_{s}$ come from exceptional
algebraic cycles in some product $\mathcal{X}_s^N$. A Baire category
type argument then implies that every special subvariety of $S$ ought to be algebraic. As a
corollary of \Cref{definability}, \Cref{definability 
  period map}, and Peterzil-Starchenko's GAGA o-minimal \Cref{PS} we obtain an alternative proof of this result originally proven by 
Cattani, Deligne and Kaplan \cite{CDK95}: 

\begin{theor}  \label{algebraicity}
The special subvarieties of $S$ associated to $\VV$ are algebraic, i.e.
the Hodge locus $\HL(S, \VV)$ is a countable union of closed irreducible algebraic
subvarieties of $S$.
\end{theor}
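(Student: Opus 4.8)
The plan is to deduce \Cref{algebraicity} from the definability results together with the o-minimal GAGA theorem \Cref{PS}. First I would recall the structure of the Hodge locus: after replacing $S$ by a finite \'etale cover (which does not affect algebraicity of the constituent subvarieties), the PVHS $\VV$ is classified by its period map $\Phi_S \colon S \lo S_{\Ga, G, M} := \Hod^0(S, \VV)$. The special subvarieties of $S$ are, by construction, the irreducible components of preimages $\Phi_S^{-1}(\Hod^0(S', \VV'))$ where $\Hod^0(S', \VV') \subset \Hod^0(S, \VV)$ ranges over the sub-Hodge-data; equivalently, $\HL(S,\VV)$ is the union, over the countably many such sub-data, of the loci where the period map lands inside a fixed complex analytic subvariety $Z \subset S_{\Ga, G, M}$ cut out by the vanishing of (the continuation of) a given Hodge tensor. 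The key point is that each such $Z$, and hence each $\Phi_S^{-1}(Z)$, is definable in an appropriate o-minimal structure.

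Next I would make this precise. By \Cref{definability}(1), $S_{\Ga, G, M}$ carries a canonical $\RR_{\an}$-definable manifold structure; and by \Cref{definability period map}, $\Phi_S \colon S \lo S_{\Ga, G, M}$ is $\RR_{\an,\exp}$-definable, where $S$ has its algebraic (hence $\RR_{\alg}$-, a fortiori $\RR_{\an,\exp}$-) definable structure. For a fixed sub-Hodge-datum, the locus $Z \subset S_{\Ga, G, M}$ over which the prescribed tensor is of Hodge type is an $\RR_{\an}$-definable complex analytic subvariety: upstairs on the Mumford-Tate domain $\cD = G/M$ it is cut out by algebraic (Hodge-type) conditions, and its image downstairs is definable because, by \Cref{definability}(2), the restriction of the uniformization $\pi$ to a semi-algebraic fundamental set $\FF$ is $\RR_{\an}$-definable. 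Therefore $\Phi_S^{-1}(Z)$ is an $\RR_{\an,\exp}$-definable subset of $S$. It is simultaneously a complex analytic subvariety of the algebraic variety $S$ (being the analytic locus where the flat section stays of Hodge type, pulled back under the holomorphic $\Phi_S$). Now the o-minimal Chow / GAGA theorem of Peterzil-Starchenko (\Cref{PS}) applies: a closed complex analytic subset of a complex algebraic variety which is also definable in an o-minimal structure is automatically algebraic. Hence each $\Phi_S^{-1}(Z)$ is a closed algebraic subvariety of $S$, and so are its finitely many irreducible components; taking the union over the countably many sub-Hodge-data gives $\HL(S,\VV)$ as a countable union of closed irreducible algebraic subvarieties.

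A few points need care. One must check that the relevant locus $Z$ downstairs is genuinely $\RR_{\an}$-definable and closed: the cleanest route is to work with the pullback to $\FF$, where one intersects the definable set $\FF \subset G/M$ with the (semi-)algebraic Hodge-type condition defining $Z$ upstairs, then push forward along the definable map $\pi_{|\FF}$; closedness of $\Phi_S^{-1}(Z)$ in $S$ (as opposed to merely in some definable chart) follows because the set of points where a given locally flat tensor is of Hodge type is closed in the analytic topology by semicontinuity of Hodge filtrations, and one applies \Cref{PS} to its closure. Another point is that there are only countably many sub-Hodge-data to consider, which is standard (the monodromy group and the ambient lattice have only countably many relevant sub-objects), so countability of the union is automatic.

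The main obstacle is the second paragraph: verifying that the Hodge-type locus $Z \subset S_{\Ga, G, M}$ is $\RR_{\an}$-definable, i.e. correctly transporting the manifestly (semi-)algebraic Hodge-type conditions on $\cD = G/M$ through the uniformization to the quotient. This is exactly where \Cref{definability}(2) — definability of $\pi$ restricted to a semi-algebraic Siegel/fundamental set — does the work, but one has to be slightly careful that the fundamental set is chosen independently of the (countably many) tensors, and that the intersection with the algebraic condition upstairs remains definable and descends correctly. Once that is in hand, the rest is a formal combination of \Cref{definability period map}, definability of compositions and preimages in o-minimal structures, and the single invocation of \Cref{PS}.
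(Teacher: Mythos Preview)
Your overall strategy is exactly the paper's: show that each special subvariety $Y\subset S_{\Ga,G,M}$ is $\RR_{\an,\exp}$-definable, pull back along the definable period map $\Phi_S$ (\Cref{definability period map}), and invoke Peterzil--Starchenko (\Cref{PS}). The one substantive difference is \emph{how} you establish definability of $Y$.

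The paper does this via \Cref{definability}(3): a special subvariety of $S_{\Ga,G,M}$ is by definition the image of a Hodge morphism $f:S_{\Ga',G',M'}\to S_{\Ga,G,M}$, which is in particular a morphism of arithmetic quotients, hence $\RR_{\an}$-definable; the image of a definable map is definable. You instead try to use \Cref{definability}(2): the Hodge-type locus $\tilde Z\subset G/M$ is semi-algebraic, and you want to push it down through the definable $\pi_{|\FF}$. Here there is a genuine gap. Definability of $Z=\pi(\tilde Z)$ in $S_{\Ga,G,M}$ amounts to definability of $\pi^{-1}(Z)\cap\FF=(\Ga\cdot\tilde Z)\cap\FF$, a priori a \emph{countable} union of semi-algebraic sets. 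To conclude, you need that only finitely many $\gamma\in\Ga$ satisfy $\gamma\tilde Z\cap\FF\neq\emptyset$; equivalently, that a Siegel set for $\G'$ meets only finitely many $\Ga$-translates of a Siegel set for $\G$. This is not automatic --- it is exactly Orr's finiteness theorem (\Cref{orr}), which is the engine behind the proof of \Cref{definability}(3). So your route through (2) implicitly requires redoing the proof of (3); simply citing (3), as the paper does, is both shorter and avoids the delicate point you flag in your final paragraph.

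Two minor remarks. First, your worry about ``closedness'' of $\Phi_S^{-1}(Z)$ is unnecessary: $Y$ is a closed complex analytic subvariety of $S_{\Ga,G,M}$ and $\Phi_S$ is holomorphic, so $\Phi_S^{-1}(Y)$ is automatically closed analytic in $S$. Second, the countability of the set of special subvarieties is indeed standard (there are countably many sub-Hodge data), so that part is fine.
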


The proof of \Cref{algebraicity} in
\cite{CDK95} works as follows. Let $\ol{S}$ be a smooth
compactification of $S$ with a simple normal crossing divisor $D$ at
infinity. Locally in the analytic topology
$S$ identifies with $(\Delta^*)^r \times
\Delta^l$ inside $\ol{S} = \Delta^{r+l}$ (where $\Delta$ denotes the
unit disk). The ${SL_2}^n$-orbit theorems of \cite{Schmid} and 
\cite{CKS} describes extremely precisely the asymptotic of the period
map $\Phi_S$ on $(\Delta^*)^r \times
\Delta^l$. Using this description, Cattani, Deligne and Kaplan manage
to write sufficiently explicitly the equation of the locus $S(v)
\subset (\Delta^*)^r \times \Delta^l$ of the points at which some
determination of a given multivalued flat section $v$ of $\VV$ is a Hodge
class to prove that its closure $\ol{S(v)}$ in $\Delta^{r+l}$ is
analytic in this polydisk. Our proof via \Cref{definability period
  map} bypass these delicate local computations, hence seems a worthwhile
simplification.

In view of \Cref{definability period map}, its corollary
\Cref{algebraicity}, and the recent proof \cite{BaT} (using
\Cref{algebraicity} and o-minimal technics) of
the Ax-Schanuel conjecture for pure Hodge
varieties stated in \cite[Conj. 7.5]{K17}, we hope to convey the idea
that o-minimal geometry is an important tool in variational Hodge theory. We refer to \cite[section 1.5]{K17} for
possible applications of these results to the structure of $\HL(S,
\VV)$.

\Cref{algebraicity} has been extended to the case of (graded
polarizable, admissible) variation of mixed Hodge structures in \cite{BP1}, \cite{BP2},
\cite{BP3}, \cite{BPS}, \cite{KNU11} using \cite{CDK95} and the ${SL_2}^n$-orbit
theorem of \cite{KNU08} which  extends \cite{Schmid} and
\cite{CKS} to the mixed case. Our o-minimal proof of \Cref{definability period
  map} certainly extends to this case, thus giving a simpler proof of the
algebraicity of Hodge loci in full generality. We will come back to
this problem is a sequel to this paper.

\subsection{Acknowledgments} I would like to thank Patrick Brosnan,
who asked me some time ago about a proof of \Cref{algebraicity} using o-minimal technics; Mark Goresky,
Lizhen Ji and Arvind Nair, who made me notice that the map
from the Borel-Serre compactification to the Baily-Borel one is not
subanalytic, and that Borel-Serre compactifications are not
functorial; Wilfried Schmid, who confirmed to me that \Cref{finitely many
  Siegel} should hold; and especially Colleen Robles, for our fruitful
exchanges about \Cref{finitely many Siegel}.

\subsection{}After finishing this paper, I learned from Phillip
Griffiths and Akshay Venkatesh that Benjamin
  Bakker and Jacob Tsimerman are announcing results similar to
  \Cref{definability period map} and \Cref{finitely many Siegel}.

\section{Proof of \Cref{definability}}

\subsection{Semi-algebraic structure on $G/M$}
The existence of a natural semi-algebraic structure on $G/M$, which we
use in \Cref{definability}(iii) is classical. We provide a proof for the
convenience of the reader.

\begin{lem} \label{G/M semi}
Let $\G$ be a connected semi-simple linear algebraic $\QQ$-group, $G:= \G(\RR)^+$ the real Lie group
connected component of the identity of $\G(\RR)$, and $M \subset G$ a
connected compact subgroup. Then $G/M$ admits a natural structure of a
semi-algebraic set, and the projection map $G \lo G/M$ is semi-algebraic.
\end{lem}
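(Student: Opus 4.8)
The statement asserts that $G/M$ carries a natural semi-algebraic structure with $G \to G/M$ semi-algebraic. The plan is to realize $G/M$ as a closed semi-algebraic subset of an affine space on which $G$ acts algebraically, using that $M$ is the stabilizer of a suitable point.

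First I would note that $\G$ is an affine algebraic $\QQ$-group, hence a closed subgroup of some $\GL_N$, so $G = \G(\RR)^+$ is a semi-algebraic subset of $\RR^{N^2}$ (it is the identity component of the real points, and a connected component of a real algebraic group is semi-algebraic — it is a union of connected components of the smooth locus, cut out by sign conditions on polynomials), and the group operations are semi-algebraic (in fact polynomial/rational). The subgroup $M$, being a compact — hence in particular closed — subgroup of $G$, will need to be shown to be itself semi-algebraic; this is where a little care is required. Since $M$ is compact, its Zariski closure $\overline{M}$ in $\GL_{N,\RR}$ has the property that $\overline{M}(\RR)$ is compact with identity component containing $M$; as $M$ is connected one gets $M = \overline{M}(\RR)^+$, so $M$ is real algebraic in $G$, hence semi-algebraic. (Alternatively, one invokes that a compact Lie subgroup of a linear algebraic group over $\RR$ is algebraic, a classical fact of Chevalley.)

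Next, the key step: produce a semi-algebraic — indeed algebraic — action of $\G$ on some affine variety $V$ over $\RR$ and a point $v_0 \in V(\RR)$ whose stabilizer in $G$ is exactly $M$, with the orbit map $g \mapsto g\cdot v_0$ inducing a homeomorphism $G/M \xrightarrow{\ \sim\ } G\cdot v_0$. By Chevalley's theorem there is a finite-dimensional algebraic representation $\G \to \GL(W)$ over $\QQ$ and a line $L \subset W$ whose stabilizer is the Zariski closure $\overline{M}$; replacing $W$ by a symmetric or exterior power and $L$ by the corresponding line one can often arrange the stabilizer of a vector $v_0 \in L$ to be $\overline{M}$ itself, but in any case, since $\overline{M}$ is reductive (being the Zariski closure of a compact group) one can instead argue as follows: $\overline{M}(\RR)$ is compact, so averaging a generic linear form over $\overline{M}(\RR)$ with respect to Haar measure gives an $\overline{M}$-invariant vector $v_0$ on which the stabilizer of $v_0$ in $\G$ is reductive with real points containing $M$; shrinking as needed (taking the fixed vector in a faithful representation coming from $\G/\overline{M} \hookrightarrow$ some $\GL(W')$, which is quasi-affine, and passing to an affine embedding) yields $v_0 \in V(\RR)$ with $\mathrm{Stab}_{\G}(v_0)(\RR)^+ = M$. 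The orbit $G\cdot v_0 \subset V(\RR) \subset \RR^n$ is then a semi-algebraic set: it is the image under the polynomial map $G \to \RR^n$, $g \mapsto g\cdot v_0$, of the semi-algebraic set $G$, and images of semi-algebraic sets under polynomial (even semi-algebraic) maps are semi-algebraic by the Tarski–Seidenberg theorem.

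Finally I would check that the continuous bijection $G/M \to G\cdot v_0$ is a homeomorphism: the map $G \to G\cdot v_0$ is open onto its image because $M$ is compact (the orbit map of a Lie group action with compact stabilizer is open onto the orbit — properness of $M$ makes the quotient map $G \to G/M$ proper-fibered, and combined with local sections one gets openness), so the bijection $G/M \to G\cdot v_0$ is a homeomorphism. Transporting the semi-algebraic structure of $G\cdot v_0$ back along this homeomorphism equips $G/M$ with a semi-algebraic structure, and the projection $G \to G/M$ is then identified with the polynomial orbit map $g \mapsto g\cdot v_0$ followed by a homeomorphism, hence semi-algebraic. Naturality (independence of the chosen embedding $\G \hookrightarrow \GL_N$ and of $v_0$, up to semi-algebraic isomorphism) follows because any two such orbit realizations are intertwined by a $G$-equivariant map which is the restriction of a polynomial map, again semi-algebraic by Tarski–Seidenberg. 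The main obstacle is the middle step — exhibiting $V$, $v_0$ with stabilizer exactly $M$ (not just $\overline{M}(\RR)^+$, and not a larger compact group); the Haar-averaging trick over the compact group $\overline{M}(\RR)$ applied to a faithful representation, together with the remark that $\G/\overline{M}$ is quasi-affine (Chevalley) hence sits in an affine $V$, is what makes it work, but one must be attentive that passing to $\RR$-points and identity components does not enlarge the stabilizer.
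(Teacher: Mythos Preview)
Your overall strategy coincides with the paper's: invoke Chevalley's theorem to find a finite-dimensional representation of $\G_\RR$ and a line whose stabiliser is the real algebraic group $\bM_\RR$ underlying $M$, upgrade to a fixed vector using compactness, and then realise $G/M$ as a connected component of the image of the resulting polynomial orbit map, which is semi-algebraic by Tarski--Seidenberg.

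Where you diverge from the paper, and where your argument becomes genuinely unclear, is the passage from ``stabilises a line'' to ``fixes a vector with stabiliser exactly $M$''. You propose Haar-averaging a generic linear form to get an $\overline{M}$-invariant vector, then an unspecified ``shrinking as needed'' step; as written this does not produce a vector whose stabiliser in $\G(\RR)$ is precisely $M$ (averaging gives an invariant covector, and there is no reason the stabiliser of the resulting object is not strictly larger). You yourself flag this as the main obstacle. The paper resolves it in one line: since $M$ is compact \emph{and connected}, any continuous homomorphism $M\to\RR^*$ is trivial, so $M$ does not merely stabilise the Chevalley line $l$ but fixes any generator $v\in l$. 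The stabiliser of $v$ in $\G(\RR)$ is then contained in the line-stabiliser $\bM_\RR(\RR)=M$ (using that a compact Lie group is already the set of real points of a real algebraic group, the [OV] reference) and contains $M$, hence equals $M$. No averaging or iteration is required.

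A minor difference in the other direction: the paper composes the orbit map with a generating set $p_1,\dots,p_d$ of $\bM_\RR$-invariant polynomials (Hilbert's finiteness) before taking the image in $\RR^d$. Your direct route---taking the orbit $G\cdot v\subset W$ itself as the semi-algebraic model and checking that $G/M\to G\cdot v$ is a homeomorphism via compactness of $M$---is already sufficient for the semi-algebraic conclusion, so on that point your argument is slightly more economical.
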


\begin{rem} \label{rem1}
In general $G/M$ does not admit a structure of real algebraic variety.
This is already true for $G$: for instance the group $SO(p,q)$ is a real
algebraic variety but its connected component $G:= SO(p,q)^+$
is only semi-algebraic for $p\geq q>0$. On the other hand any compact real Lie group
$M$ admits a natural structure $\bM_\RR$ of real algebraic group, see \cite[Th. 5, p.133]{OV}.
\end{rem}

\begin{proof}
Let $\bH_\RR$ be a real reductive algebraic subgroup of $\G_\RR$. Recall the classical:
\begin{sublem} (Chevalley)
There exists a finite dimensional $\G_\RR$-module $W$ and a line $l \subset W$ such
that the stabilizer in $\G_\RR$ of $l$ is precisely $\bH_\RR$.
\end{sublem}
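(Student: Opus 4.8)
The plan is to deduce the Sublemma from the classical fact that the right regular representation of $\G_\RR$ on its affine coordinate ring $\RR[\G]$ is locally finite. Write $I \subset \RR[\G]$ for the ideal of functions vanishing on $\bH_\RR$, and let $\rho$ denote the right translation action, $(\rho_g f)(x) = f(xg)$. The first step is the standard identity, valid for any Zariski-closed subgroup: $\bH_\RR = \{ g \in \G_\RR : \rho_g I \subseteq I \}$. Indeed, if $g \in \bH_\RR$ and $f \in I$, then $(\rho_g f)(x) = f(xg) = 0$ for every $x \in \bH_\RR$, so $\rho_g f \in I$; conversely, if $\rho_g I \subseteq I$, then in particular every $\rho_g f$ with $f \in I$ vanishes at the identity, i.e. $f(g) = 0$ for all $f \in I$, so $g \in \bH_\RR$.

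Next I would choose the module $W$. Since $\RR[\G]$ is Noetherian, $I$ is generated by finitely many functions $f_1, \dots, f_m$; using the comultiplication one sees that the $\RR$-span of all right translates $\{\rho_g f : g \in \G_\RR\}$ of a single $f$ is finite-dimensional, so there is a finite-dimensional subspace $V \subset \RR[\G]$, stable under $\rho$, containing $f_1, \dots, f_m$ (the resulting representation of $\G_\RR$ on $V$ is automatically algebraic). Set $U := V \cap I$ and $d := \dim U$. I claim $\bH_\RR$ is exactly the stabilizer in $\G_\RR$ of the subspace $U \subset V$: if $g \in \bH_\RR$ then $\rho_g V = V$ and $\rho_g I = I$, hence $\rho_g U = U$; conversely, since $U$ contains the generators $f_i$ we have $I = \RR[\G] \cdot U$, and because $\rho_g$ is a ring automorphism of $\RR[\G]$, $\rho_g U = U$ forces $\rho_g I = \RR[\G] \cdot \rho_g U = \RR[\G] \cdot U = I$, so $g \in \bH_\RR$ by the first step. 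Finally I would put $W := \bigwedge^d V$ and $l := \bigwedge^d U$; this is a line, and a linear automorphism of $V$ preserves $l$ if and only if it preserves $U$, so the stabilizer of $l$ in $\G_\RR$ equals the stabilizer of $U$, which is $\bH_\RR$.

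I do not expect any genuine obstacle: the argument is entirely classical (this is how Chevalley's theorem is proved in Borel's or Springer's textbooks). The only points deserving a line of justification are the local finiteness of the right regular representation — allowing $V$ to be taken finite-dimensional and $\rho$-stable — and the elementary observation that passing from a $d$-dimensional subspace $U$ to the line $\bigwedge^d U$ does not change the stabilizer. Note also that the reductivity hypothesis on $\bH_\RR$ plays no role here; the statement holds for an arbitrary Zariski-closed subgroup of $\G_\RR$.
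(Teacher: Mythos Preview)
Your proof is correct and follows essentially the same route as the paper: both arguments work inside the coordinate ring $\RR[\G_\RR]$, identify $\bH_\RR$ as the stabilizer of its defining ideal under the regular representation, pass to a finite-dimensional $\G_\RR$-stable subspace containing generators of this ideal via local finiteness, and then use the exterior-power trick to convert a subspace into a line. The only cosmetic differences are that the paper phrases things via left multiplication while you use right translation, and you spell out more carefully why the stabilizer of $U = V \cap I$ in $V$ is exactly $\bH_\RR$ (a step the paper leaves implicit); your remark that reductivity of $\bH_\RR$ is irrelevant is also correct.
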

\begin{proof}
Consider the action of $\G_\RR$ on itself by left multiplication. Then
the stabilizer of the closed subvariety $\bH_\RR$ is $\bH_\RR$
itself. Thus, $\bH_\RR$ is also the stabilizer of the ideal
$I(\bH_\RR) \subset \RR[\G_\RR]$. Let us choose a finite-dimensional
space $I \subset I(\bH_\RR)$ which generates that ideal. As
$I(\bH_\RR)$ is a rational $\bH_\RR$-module we can also assume that $I$ is
$\bH_\RR$-stable. Hence it is contained in a finite dimensional
$\G_\RR$-submodule $J \subset \RR[\G_\RR]$. As $\bH_\RR$ is the
stabilizer of $I$, it is also the stabilizer of the line $l:=\bigwedge^n
I$ of the $\G_\RR$-module $W:= \bigwedge^nJ$, where $n := \dim(I)$.
\end{proof}

Apply the previous result to $\bH_\RR = \bM_\RR$, the real algebraic
subgroup of $\G_\RR$ such that $\bM_\RR(\RR)=M$. As the group $M$ is
compact connected, it not only stabilizes the line $l$ but fixes any
generator $v$ of $l$. 

By a classical result of Hilbert (see \cite[Ch. VIII, \textsection
14]{Weyl}) the (graded) algebra $\RR[W]^{\bM_\RR}$ of
$\bM_\RR$-invariant polynomials on $W$ is finitely generated, say by
homogeneous elements $p_1, \cdots, p_d$. Consider the real algebraic map
$p : \G(\RR) \rightarrow W \rightarrow \RR^d$
obtained by composing the orbit map of the vector $v \in W$ with $
(p_1, \cdots, p_d): W \lo \RR^d$. It identifies $\G(\RR)/M$ with the image
$p(\G(\RR))$, hence $G/M$ with a connected component of
$p(\G(\RR))$. As $p$ is real algebraic the subset $p(\G(\RR))$, hence
its connected component $G/M$, is semi-algebraic.
As $p$ is real-algebraic, the projection $G \lo G/M$ is semi-algebraic.
\end{proof}

\subsection{Definability of arithmetic quotients: proof of
  \Cref{definability}(1)} \label{general}

In this section we recall basic facts about real manifolds with
corners and their definable version. The proof of \Cref{definability}(i)
is then a direct consequence of the existence of the Borel-Serre
compactification $\ol{S_{\Ga, G, M}}$, see \cite{BS}, \cite{BJ}.

\subsubsection{Real analytic manifolds with corners}
From the analytic point of view, the class of real analytic manifolds
with corners is natural: a compact real-analytic manifold with corners
is the real version of the compactification of a complex analytic manifold
by a normal crossing divisor. However this class of
manifolds has been poorly studied and even their definition is not 
universally agreed. We use the one given by \cite{Dou}, which has been
clarified and developed in \cite{Joyce}. For the convenience of the reader we recall the basic
definitions but we refer to \cite{Joyce} for more details. Notice
that Joyce works in the $C^\infty$ context, but all the definitions we
need translate literally to the real-analytic setting by replacing ``smooth'' with ``real-analytic''.

Let $X$ be a paracompact Hausdorff topological space $X$ and $n\geq 1$
an integer. An $n$-dimensional chart with corners on $X$ is a pair 
$(U, \varphi)$ where $U$ is an open subset in $\RR^n_k:= \RR_{\geq 0}^k
\times \RR^{n-k}$ for some $0 \leq k \leq n$ and $\varphi: U \lo X$ is a homeomorphism with a
non-empty open set $\varphi(U)$.

Given $A \subset \RR^m$ and $B \subset \RR^n$ and $\alpha:A \lo B$
continuous, we say that $\alpha$ is real-analytic if it extends to a
real-analytic map between open neighborhoods of $A$, $B$.

Two $n$-dimensional charts with
corners $(U, \varphi)$, $(V, \psi)$ on $X$ are said real-analytically compatible if $\psi^{-1}
\circ \varphi: \varphi^{-1}(\varphi(U) \cap \psi(V)) \lo
\psi^{-1}(\varphi(U) \cap \psi(V))$ is a homeomorphism and $\psi^{-1}
\circ \varphi$ (resp. its inverse) are real-analytic in the sense
above.

An $n$-dimensional real analytic atlas with corners for $X$ is a system $\{(U_i,
\varphi_i): i \in I\}$ of pairwise real-analytically compatible charts
with corners on $X$ with $X= \bigcup_{i\in I} \varphi_i(U_i)$. We call
such an atlas maximal if is not a proper subset of any other
atlas. Any atlas is contained in a unique maximal atlas: the set of
all charts with corners $(U, \varphi)$ on $X$ compatible with $(U_i,
\varphi_i)$ for all $i \in I$. 

A real-analytic manifold with corners of dimension $n$ is a paracompact Hausdorff
topological $X$ equipped with a maximal $n$-dimensional real-analytic
atlas with corners. Weakly real-analytic maps between real-analytic
manifolds with corners are the continuous maps which are real-analytic
in charts (cf. \cite[def. 3.1]{Joyce}, where a stronger notion of
real-analytic map is also defined; we won't need this strengthened notion).

Given $X$ a real-analytic $n$-manifold with corners, one defines its
boundary $\partial X$ (cf. \cite[def. 2.6]{Joyce}. This is a
real-analytic $n$-manifold with corners for $n>0$, endowed with an
immersion (not necessarily injective) $i_X: \partial X
\lo X$ (cf. \cite[prop.2.7]{Joyce}) which is real-analytic
(\cite[Theor. 3.4.(iv)]{Joyce}) in particular weakly real-analytic.

\subsubsection{$\cR$-definable manifolds with corners}
Let $\cR$ be any fixed o-minimal expansion of $\RR$. The notion of
$\cR$-definable manifold is given in \cite[chap.10]{VDD} and in
\cite[p.507]{VdM2}. From now on $\cR$ denotes an o-minimal expansion
of $\RR_\an$. We will need the extended notion of
$\cR$-definable manifold with corners.

Let $X$ be a paracompact Hausdorff
topological space $X$. An $n$-dimensional chart with
corners $(U, \varphi)$ on $X$ is said to be $\cR$-definable if $U$ is an $\cR$-definable subset of
$\RR^n$ (equivalently: of $\RR^n_k$).

Two $n$-dimensional $\cR$-definable charts with
corners $(U, \varphi)$, $(V, \psi)$ on $X$ are said $\cR$-compatible if $\psi^{-1}
\circ \varphi: \varphi^{-1}(\varphi(U) \cap \psi(V)) \lo
\psi^{-1}(\varphi(U) \cap \psi(V))$ is an $\cR$-definable
homeomorphism between $\cR$-definable subsets 
$\varphi^{-1}(\varphi(U) \cap \psi(V))$ and $\psi^{-1}(\varphi(U) \cap
\psi(V))$ of $\RR^n$. 

An $n$-dimensional $\cR$-definable atlas with corners for $X$ is a system $\{(U_i,
\varphi_i): i \in I\}$, {\em $I$ finite}, of pairwise $\cR$-compatible $\cR$-definable charts
with corners on $X$ with $X= \cup_{i\in I} \varphi_i(U_i)$. Two such
atlases $\{(U_i, \varphi_i): i \in I\}$ and$\{(V_j,
\psi_j): j \in J\}$  are said $\cR$-equivalent if all the ``mixed''
transition maps $\psi_j \circ \varphi_i^{-1}$ are $\cR$-definable.

An $\cR$-definable manifold with corners of dimension $n$ is a paracompact Hausdorff
topological $X$ equipped with an $\cR$-equivalence class of $n$-dimensional $\cR$-definable
atlas with corners.

\begin{rem}
Notice that the definitions of real-analytic manifold with corners
and $\cR$-definable manifold with corners are parallel,
except the crucial fact that we work in a strictly finite setting for
$\cR$-definable manifolds: the set $I$ of charts has to be finite. This finiteness
condition, in addition to the definability condition, ensures the
tameness at infinity of the $\cR$-definable manifolds with corners.
\end{rem}

We say that a subset $Z\subset X$ is $\cR$-definable (resp. open or closed)
if $\varphi_i^{-1}(Z \cap \varphi_i(U_i))$ is an $\cR$-definable (resp. open or closed) subset
of $U_i$ for all $i \in I$. An $\cR$-definable map between 
$\cR$-definable manifolds (with corners) is a map whose graph is an $\cR$-definable subset of the
$\cR$-definable product manifold (with corners).

\subsubsection{Compact real-analytic manifolds with corners are
$\RR_{\an}$-definable}

\begin{prop} \label{ManifoldWC}
Let $X$ be a compact real-analytic $n$-manifold with corners. Then $X$ has
a natural structure of $\RR_{\an}$-definable manifold with
corners. Moreover the map $i_X: \partial X \lo X$ is
$\RR_{an}$-definable. In particular the interior $X \setminus
i_X(\partial X)$ is an $\RR_{\an}$-definable manifold.
\end{prop}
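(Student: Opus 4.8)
The plan is to reduce Proposition~\ref{ManifoldWC} to a purely local statement about the model spaces $\RR^n_k$, together with the compactness of $X$, and then invoke the fact that $\RR_{\an}$ is o-minimal. First I would fix a maximal real-analytic atlas with corners on $X$. By definition, around every point of $X$ there is a chart $(U,\varphi)$ with $U$ open in some $\RR^n_k$. Since $\varphi(U)$ is open in $X$ and $X$ is compact, one can extract a \emph{finite} subcover: finitely many charts $(U_i,\varphi_i)$, $i\in I$, with $X=\bigcup_{i\in I}\varphi_i(U_i)$. Moreover, shrinking each $U_i$ if necessary, I would arrange that each $U_i$ is a bounded open box (or a semi-algebraic, hence $\RR_{\an}$-definable, open subset) of $\RR^n_k$ whose closure in $\RR^n_k$ is still contained in the domain of a real-analytic chart; this uses that $X$ is locally compact and that the model spaces are locally semi-algebraic. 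The point of keeping the closures inside a chart domain is that all transition maps will then be \emph{restricted} analytic.

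The key step is to check that the transition maps $\psi_j^{-1}\circ\varphi_i$, defined on $\varphi_i^{-1}(\varphi_i(U_i)\cap\psi_j(U_j))$, are $\RR_{\an}$-definable. By the definition of real-analytic compatibility recalled in the excerpt, each such transition map is the restriction of a map that extends real-analytically to an open neighbourhood of the (compact) closure of its domain. A real-analytic function on a neighbourhood of a compact box, composed with an affine rescaling sending that box into $[0,1]^n$, is a finite sum of restricted analytic functions in the sense of Van den Dries; hence it is $\RR_{\an}$-definable, and so is its restriction to the definable domain $\varphi_i^{-1}(\varphi_i(U_i)\cap\psi_j(U_j))$ (this domain is definable because it is the preimage under an $\RR_{\an}$-definable map of an intersection of two definable opens). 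Its inverse is $\psi_j^{-1}\circ\varphi_i$ the other way and is $\RR_{\an}$-definable for the same reason, so each transition map is an $\RR_{\an}$-definable homeomorphism between $\RR_{\an}$-definable subsets of $\RR^n$. Thus $\{(U_i,\varphi_i):i\in I\}$ is a finite $\RR_{\an}$-definable atlas with corners, and its $\RR_{\an}$-equivalence class is independent of the choices (any two such finite refinements have $\RR_{\an}$-definable mixed transition maps, by the same restricted-analytic argument); this equips $X$ with its natural $\RR_{\an}$-definable structure with corners.

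For the statement about the boundary, recall from \cite[def.~2.6, prop.~2.7, Theor.~3.4]{Joyce} that $\partial X$ is again a compact real-analytic $n$-manifold with corners, with a weakly real-analytic immersion $i_X:\partial X\lo X$. I would apply the first part of the proposition to $\partial X$ itself to get its $\RR_{\an}$-definable structure, and then note that in suitable charts $i_X$ is locally modelled on the standard face inclusions $\RR^{n-1}_{k-1}\hookrightarrow\RR^n_k$ (a coordinate hyperplane inclusion composed with a real-analytic chart change), hence restricted-analytic, hence $\RR_{\an}$-definable. Covering $\partial X$ by the finitely many charts adapted to the chosen atlas of $X$ shows the graph of $i_X$ is a finite union of $\RR_{\an}$-definable pieces, so $i_X$ is $\RR_{\an}$-definable. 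Finally $i_X(\partial X)$ is then an $\RR_{\an}$-definable closed subset of $X$ (image of a definable map from a compact, or alternatively a finite union of definable pieces), so its complement $X\setminus i_X(\partial X)$ is an $\RR_{\an}$-definable open submanifold --- with no corners, since away from the boundary the charts take values in open subsets of $\RR^n=\RR^n_0$.

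The main obstacle I anticipate is purely bookkeeping rather than conceptual: one must be careful that the chart domains can be chosen with compact closure \emph{inside} larger chart domains so that ``real-analytic on the closure'' genuinely holds (making the transition maps restricted analytic rather than merely analytic on an open set), and one must verify that this can be done with \emph{finitely} many charts --- which is exactly where compactness of $X$ is used, and why the analogous statement fails for non-compact real-analytic manifolds with corners unless one already has extra tameness input (as is provided, in the case of interest, by Borel--Serre theory giving a \emph{compact} $\ol{S_{\Ga,G,M}}^{BS}$). Checking the independence of the $\RR_{\an}$-structure on the atlas is the other routine-but-necessary verification.
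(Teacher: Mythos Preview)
Your proposal is correct and follows essentially the same route as the paper: choose relatively compact semi-algebraic chart domains, use compactness to extract a finite atlas, observe that real-analytic compatibility on such domains gives restricted-analytic (hence $\RR_{\an}$-definable) transition maps, check independence of choices, and then treat $\partial X$ and $i_X$ by the same mechanism via its weak real-analyticity. The paper is more terse (it simply asserts that real-analytic compatibility ``implies immediately'' $\RR_{\an}$-compatibility), whereas you spell out the restricted-analytic reasoning; one small slip is that $\partial X$ is an $(n-1)$-manifold with corners, not an $n$-manifold.
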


\begin{proof}
For each point $x$ of $X$ choose $\varphi_x: U_x \lo (X, x)$ a real-analytic
chart with corners whose image $\varphi(U_x)$ is a neighborhood of
$x$. Without loss of generality we can assume that $U_x \subset \RR^n_k$ is relatively
compact and semi-analytic, hence $\RR_{\an}$-definable. Hence $(U_x,
\varphi_x)$ is a real-analytic chart with corners for $X$ which is
also an $\RR_\an$-definable chart with corners for $X$. 

Fix $x, y$ two points in $X$. The fact that the two real-analytic charts $(U_x, \varphi_x)$ and $(U_y,
\varphi_y)$ are real-analytically compatible implies immediately that
they are $\RR_{\an}$-compa\-tible.

The space $X$ is compact hence one can extract from the covering
family $\{(U_x, \varphi_x),\; x \in X\}$ a finite subfamily $\{ (U_i,
\varphi_i), \; i \in I\}$, such that $X = \bigcup_{i\in I}
\varphi_i(U_i)$: this is an $n$-dimensional
$\RR_{\an}$-definable atlas with corners for $X$, which defines a
structure of $\RR_\an$-definable manifold with corners on $X$.

One easily checks that this structure is independent of the choice of
the finite extraction $\{ (U_i,
\varphi_i), \;i \in I\}$ of $\{(U_x, \varphi_x), \;x \in X\}$, and
also of the choice of the relatively compact and semi-analytic subsets
$U_x$.

Hence $X$ has a natural structure of $\RR_{\an}$-definable manifold with
corners. The same procedure endows the compact real-analytic $(n-1)$-manifold
with corners $\partial X$ with a natural $\RR_\an$-definable
structure. The fact that $i_X: \partial X \lo X$ is weakly
real-analytic implies immediately that $i_X$ is $\RR_\an$-definable
and that the manifold $X \setminus i_X(\partial X)$ is $\RR_\an$-definable.
\end{proof}

\subsubsection{Proof of \Cref{definability}(1)}

\begin{proof}[\unskip\nopunct]
In \cite{BS} Borel and Serre construct a natural compactification
$\overline{S_{\Gamma, G, K}}^{BS}$ of any arithmetic locally symmetric space
$S_{\Gamma, G, K}$ in the category of real-analytic manifolds with
corners, using the notion of geodesic actions and $S$-spaces. 
In \cite[\textsection 3]{BJ} Borel and Ji give a uniform construction of a
compactification $\overline{S_{\Ga, G, M}}^{BS}$ of any arithmetic quotient $S_{\Ga, G, M}$
in the category of real-analytic manifolds with corners, simplifying
the approach of \cite{BS} as they do not rely anymore on the notion of
$S$-spaces and delicate inductions. 

The strategy of Borel-Ji consists in constructing a partial
compactification $\overline{G}^{BS}$ of $G$ in the category of
real-analytic manifolds with corners \cite[Prop.6.3]{BJ}, such that the left $\G(\QQ)^+$-action 
on $G$ (see \cite[prop. 3.12]{BJ}) and the commuting right $K$-action of a maximal
compact subgroup $K$ (see \cite[Prop.3.17]{BJ}) both extend to an action by weakly analytic maps to
$\overline{G}^{BS}$ (see proof of \cite[Prop. 6.4]{BJ}). For any neat
arithmetic subgroup $\Ga$ of $G$ and compact subgroup $M$ of $G$,  the action of $\Gamma \times M$ on
$\overline{G}^{BS}$ is free and proper. The quotient $\ol{S_{\Ga,
G,  M}}^{BS}:= \Gamma \backslash \ol{G}^{BS} /M$ provides a
compactification of the arithmetic quotient $S_{\Ga, G, M}$ in the
category of real-analytic manifolds with corners. Notice that the
compactification $\ol{S_{\Ga, G,  M}}^{BS}$ was already constructed by Kato and Usui \cite{KU}
when $S_{\Ga, G, M}$ is a Hodge variety (see \cite[Remark
3.18]{BJ}). More details on this construction will be given in the
next sections, as we will need them for proving
\Cref{definability}(ii) and (iii). 

For now, \Cref{definability}(1) follows immediately from the existence of $\ol{S_{\Ga,
G,  M}}^{BS}$ and \Cref{ManifoldWC}.

\end{proof}

\subsection{Siegel sets and definability of the uniformization map:
  proof of \Cref{definability}(2)}

\subsubsection{Siegel sets}  \label{Siegel sets}
A crucial ingredient in this paper in the classical notion of Siegel sets for
$\G$, which we recall now. We follow \cite[\textsection 2]{BJ} and refer to \cite[\S
12]{bor} for details.

We assume that the $\QQ$-rank of $\G$ is positive
(equivalently, $\Ga$ is not cocompact). Let $\bP$ be a $\QQ$-parabolic subgroup of $\G$. We denote by
$\bN_P$ ist unipotent radical and by $\bL_P$ the Levi quotient $\bN_P
\backslash \bP$ of $\PP$. Let $N_P$, $P$, and $L_P$ be the Lie groups
of real points of $\bN_P$, $\bP$ and $\bL_P$ respectively. Let $\bS_P$
be the split center of $\bL_{P}$ and $A_P$ the connected component of
the identity in $\bS_{P}(\RR)$. Let $\bM_P := \cap_{\chi \in
  X^*(\bL_{P})} \ker  \chi^2$ and $M_P= \bM_P(\RR)$. Then $L_P$ admits
a decomposition $L_P = A_P M_P$. 

Let $X$ be the symmetric space of maximal compact subgroups of $G :=\G(\RR)^+$. Choosing a point $x \in X$ corresponds to choosing a maximal
compact subgroup $K_x$ of $G$, or equivalently a Cartan involution
$\theta_x$ of $G$. The choice of $x$ defines a unique real Levi subgroup $\bL_{P,x} \subset \bP_\RR$
lifting $(\bL_P)_\RR$ which is $\theta_x$-invariant, see
\cite[1.9]{BS}. Although $\bL_P$ is defined over $\QQ$ this is not
necessarily the case for $\bL_{P,x} $. The parabolic group $P$
decomposes as 
\begin{equation} \label{LD}
P= N_P A_{P, x} M_{P_{x}} \;\;,
\end{equation}
inducing a horospherical decomposition of $G$:
\begin{equation} \label{HD}
G = N_P A_{P, x} M_{P_{x}} K_x \;\;.
\end{equation}
We recall (see \cite[Lemma 2.3]{BJ} that the right action of $P$ on
itself under the horospherical decomposition is given by
\begin{equation} \label{mult}
(n_0 a_0 m_o) (n, a, m) = (n_0 \cdot (a_0 m_0) n (a_0 m_0)^{-1}, a_0
a, m_0m) \;\;.
\end{equation}

{\it In the following the reference to the basepoint $x$ in various
subscripts is omitted.}
We let $\Phi(A_P, N_P)$ be the set of characters of $A_P$ on the Lie
algebra $\Fn_P$ of $N_P$, ``the roots of $P$ with respect to
$A_P$''. The value of $\alpha \in \Phi(A_P, N_P)$ on $a \in A_P$ is
denoted $a^\alpha$. Notice that the map $a \mapsto a^\alpha$ from
$A_P$ to $\RR^*$ is semi-algebraic.

There is a unique subset $\Delta(A_P, N_P)$ of
$\Phi(A_P, N_P)$ consisting of $\dim A_P$ linearly independent roots,
such that any element of $\Phi(A_P, N_P)$ is a linear combination with
positive integral coefficients of elements of $\Delta(A_P, N_P)$ to be
called the simple roots of $P$ with respect to $A_P$. 

\begin{defi}(Siegel set)
For any $t >0$, we define $A_{P, t} = \{ a \in A_P \; | \; a^\alpha
>t, \; \alpha \in \Delta(A_P, N_P) \}$.
For any bounded set $U \subset N_P$ and $W \subset M_{P}K$ the subset
$\FS:= U \times A_{P,t} \times W \subset G$ is called a
Siegel set for $G$ associated to $\bP$ and $x$.

Given $M \subset K$ a connected compact subgroup, a Siegel set $\FS$
for $G/M$ is the image of a Siegel set $\FS \subset G$ under the natural
projection map $G \lo G/M$.
\end{defi}

\begin{prop} \cite[Prop. 2.5]{BJ} \label{reduction}
\begin{itemize}
\item[(1)] There are only finitely many $\Gamma$-conjugacy classes of
  parabolic $\QQ$-subgroups. Let $\bP_1$, \dots, $\bP_k$ be a set of
  representatives of the $\Gamma$-conjugacy classes of parabolic
  $\QQ$-subgroups. There exists Siegel sets $\FS_i:= U_i \times A_{P_i,t_i}
  \times W_i$ associated to $\bP_i$ and $x_i$, $1 \leq i \leq k$, whose
  images in $\Gamma \backslash G$ cover the whole space. 
\item[(2)] For any two parabolic subgroups $\bP_i$ and Siegel
  sets $\FS_i$ associated to $\bP_i$, $i=1, 2$, the set
  $$\{ \gamma \in \Gamma \; |\; \gamma \FS_1 \cap \FS_{2} \not =
  \emptyset\}$$ is finite.
\item [(3)] Suppose that $\bP_1$ is not $\Gamma$-conjugate to
  $\bP_2$. Fix $U_i$, $W_i$, $i =1,2$. Then  $\gamma \FS_1\cap \FS_2=
  \emptyset$ for all $t_1, t_2$ sufficiently large.
\item[(4)] For any fixed $U, W$, when $t >>0$, $\gamma \FS \cap \FS=\emptyset$ for all
  $\gamma \in \Gamma - \Gamma_P$, where $\Gamma_P := \Gamma \cap P$.
\item[(5)] For any two different parabolic subgroups $\bP_1$ and
  $\bP_2$, when $t_1, t_2 >>0$ then $\FS_1 \cap \FS_2 = \emptyset$.
  \end{itemize}
\end{prop}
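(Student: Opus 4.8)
The plan is to derive all five statements from the classical reduction theory of arithmetic subgroups of semisimple $\QQ$-groups (Borel--Harish-Chandra, Borel), whose engine is a dictionary between the split-torus coordinate of a point in a Siegel set and a family of \emph{heights}, combined with Mahler's compactness criterion and an induction on the $\QQ$-rank $r$ of $\G$. Concretely, fix a faithful $\QQ$-rational representation $\G\hookrightarrow\GL(V)$ with a $\Gamma$-stable lattice $L\subset V_\QQ$. For each $\QQ$-parabolic $\bP$ and each $\alpha\in\Delta(A_P,N_P)$, let $\bP_\alpha\supseteq\bP$ be the associated maximal $\QQ$-parabolic and choose, in a suitable exterior power of $V$ with its lattice $\bigwedge^\bullet L$, a lattice vector $v_\alpha$ spanning a line $\ell_\alpha$ whose $\G$-stabilizer is exactly $\bP_\alpha$ and on which $A_P$ acts through a character $\omega_\alpha$ that is a positive rational combination of the simple roots of $\bP$. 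The basic estimate is that, along the horospherical decomposition \eqref{HD} for $\bP$ and writing $g=n\,a\,m\,k$, one has $\|g\cdot v_\alpha\|\asymp a^{\omega_\alpha}$ uniformly for $n$ in a bounded subset of $N_P$, $mk$ in a bounded subset of $M_PK$, and $a\in A_{P,t}$ with $t\ge 1$; indeed $\omega_\alpha$ dominates the remaining $A_P$-weights of the ambient representation once $a^\beta\ge 1$ for every simple $\beta$. Hence on a Siegel set $\FS=U\times A_{P,t}\times W$ the heights $g\mapsto\|g\,v_\alpha\|$ are bounded below, together they determine the $A_P$-coordinate of $g$ up to a bounded factor, and the line spanned by $g\cdot v_\alpha$ converges to $\ell_\alpha$ as $a\to\infty$ in $A_{P,t}$ (using that $v_\alpha$, being a highest weight vector, is fixed by $N_P$).

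Statements (1) and (2) I would prove together by induction on $r$, the case $r=0$ being the (excluded) cocompact one. Applying Mahler's criterion to the lattices $g\cdot\bigwedge^\bullet L$ shows that the complement in $\Gamma\backslash G$ of a fixed compact set is covered by the images of cuspidal neighbourhoods attached to the maximal $\QQ$-parabolics $\bP_\alpha$; near such a cusp the geometry is governed by $N_{P_\alpha}\,A_{P_\alpha,t}\times M_{P_\alpha}$, the group $\bM_{P_\alpha}$ is a reductive $\QQ$-group of $\QQ$-rank $<r$ to which the inductive hypothesis applies, and the $A_{P_\alpha,t}$-factor contributes only finitely many boundary directions. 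Assembling the finitely many cusps together with the Bruhat decomposition of $\bP_0(\QQ)$ then yields both the covering statement and the finiteness of the set of $\Gamma$-conjugacy classes of $\QQ$-parabolic subgroups. For (2): from $\gamma s_1=s_2$ with $s_i\in\FS_i$, feeding the heights $v_\alpha$ of $\bP_1$ and $v_\beta$ of $\bP_2$ into the identity, and using $\gamma\cdot\bigwedge^\bullet L=\bigwedge^\bullet L$, bounds $\gamma$ and $\gamma^{-1}$ from above and below on a fixed finite family of lattice vectors; Mahler's criterion then confines $\gamma$ to a compact subset of $\G(\RR)$, and since $\Gamma$ is discrete only finitely many such $\gamma$ occur. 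I expect this step --- making the estimates uniform and organising the induction --- to be the main obstacle; once it is in place the remaining statements are essentially formal.

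Finally, suppose (3) failed: then there are $\QQ$-parabolics $\bP_1$ and $\bP_2$ which are not $\Gamma$-conjugate, fixed $U_i,W_i$, increasing sequences $t_i^{(j)}\to\infty$, and $\gamma_j\in\Gamma$ with $\gamma_j\FS_1^{(j)}\cap\FS_2^{(j)}\neq\emptyset$, where $\FS_i^{(j)}:=U_i\times A_{P_i,t_i^{(j)}}\times W_i\subseteq\FS_i^{(1)}$. By (2) the $\gamma_j$ lie in a finite set, so after passing to a subsequence $\gamma_j=\gamma$ is constant; writing $\gamma s_1^{(j)}=s_2^{(j)}$, applying the heights of $\bP_1$ to both sides and letting $j\to\infty$, the escape-direction statement identifies $\lim_j$ of the line spanned by $s_2^{(j)}\cdot v_\alpha$ with $\gamma\cdot\ell_\alpha$ for every simple root $\alpha$ of $\bP_1$; combining this with the symmetric relation obtained from the heights of $\bP_2$, a short argument with these limits forces $\gamma\bP_1\gamma^{-1}=\bP_2$, contradicting the non-conjugacy. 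This proves (3). Statement (5) is the case $\gamma=1$ of the same argument (forcing $\bP_1=\bP_2$, contrary to hypothesis), and (4) is the case $\bP_1=\bP_2=\bP$: there one obtains $\gamma\bP\gamma^{-1}=\bP$, hence $\gamma$ lies in the normalizer $N_\G(\bP)(\QQ)=\bP(\QQ)$ since parabolic subgroups are self-normalizing, so $\gamma\in\Gamma\cap P=\Gamma_P$, again a contradiction.
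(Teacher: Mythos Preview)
The paper does not prove this proposition; it is quoted verbatim from \cite[Prop.~2.5]{BJ}, which in turn summarises classical reduction theory (\cite{bor}, \S\S12--15). There is therefore no argument in the paper to compare yours against, so I comment on the sketch itself.

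The overall architecture --- heights attached to the maximal $\QQ$-parabolics $\bP_\alpha\supseteq\bP$ via highest-weight lines in exterior powers of a faithful lattice representation, Mahler's compactness, and induction on the $\QQ$-rank for (1) --- is the classical one and is sound. In (2) the phrase ``Mahler's criterion then confines $\gamma$ to a compact subset of $\G(\RR)$'' is a mild abuse: what you actually use is that two-sided bounds on $\|\gamma\cdot v\|$ for $v$ in a lattice basis bound $\|\gamma\|_{\op}$ and $\|\gamma^{-1}\|_{\op}$, hence confine $\gamma$ to a compact and therefore finite subset of the discrete group $\Gamma$.

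For (3)--(5) there is a genuine gap: your basic estimate $\|g\cdot v_\alpha\|\asymp a^{\omega_\alpha}$ on a Siegel set, and with it the line-convergence claim, are stated with the wrong variance and fail as written. With $g=n\,a\,w\in\FS$ and $w\in M_PK$ one has $g\cdot v_\alpha=na\cdot(wv_\alpha)$, and the $\ell_\alpha$-component of $wv_\alpha$ can vanish; already for $\G=\SL_2$, $v_\alpha=e_1$, $n=1$ and $w=\left(\begin{smallmatrix}0&-1\\1&0\end{smallmatrix}\right)\in K$ one finds $\|gv_\alpha\|=a^{-1}$ rather than $a$, and the line $\RR\,gv_\alpha$ is orthogonal to $\ell_\alpha$ for every $a$. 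The correct height is $\|g^{-1}v_\alpha\|$: writing $g^{-1}=w^{-1}a^{-1}n^{-1}$, one has $n^{-1}v_\alpha=v_\alpha$ (highest weight), $a^{-1}v_\alpha=a^{-\omega_\alpha}v_\alpha$, $m^{-1}v_\alpha$ a bounded scalar multiple of $v_\alpha$ (since $M_P\subset\bP_\alpha$ stabilises $\ell_\alpha$), and a $K$-invariant norm absorbs $k^{-1}$; hence $\|g^{-1}v_\alpha\|\asymp a^{-\omega_\alpha}$ uniformly on $\FS$, with no exceptional set. With this correction your limit argument for (4) goes through cleanly: from $\gamma s_1=s_2$ one gets $\|s_1^{-1}(\gamma^{-1}v_\alpha)\|=\|s_2^{-1}v_\alpha\|\asymp a_2^{-\omega_\alpha}\to 0$, and analysing the $A_P$-weight decomposition of the lattice vector $\gamma^{-1}v_\alpha$ shows (using that $n_1^{-1}$ only raises weights and that one may let $a_1$ escape along \emph{any} ray in $A_{P,t}$) that $\gamma^{-1}v_\alpha\in\ell_\alpha$, i.e.\ $\gamma\in\bP_\alpha$; intersecting over $\alpha$ gives $\gamma\in\bP$, hence $\gamma\in\Gamma_P$. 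The two-parabolic statements (3) and (5) need a bit more bookkeeping (matching the $\ell_\alpha$ for $\bP_1$ with the $\ell_\beta$ for $\bP_2$); the details are in \cite[\S15]{bor}.
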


When $U$ and $W$ are chosen to be relatively compact open semi-algebraic subsets of
$N_P$ and $M_P K$ respectively then the Siegel set $\FS= U \times A_{P, t}
  \times W$ is semi-algebraic in $G$. As the projection map $G \lo
  G/M$ is semi-algebraic, a Siegel set for $G/M$ image of a
  semi-algebraic Siegel set for $\G$ is semi-algebraic. {\em We will
only consider such semi-algebraic Siegel sets in the rest of the text.}

\subsubsection{The partial Borel-Serre compactifications $\ol{G}^{BS}$
  and $\ol{\Gamma \backslash G}^{BS}$} \label{atlas}
 
Let $\bP \subset \G$ be a parabolic subgroup. Let $\Delta= \{ \alpha_1,
\ldots, \alpha_r\}$ be the set of simple roots in $\Phi(A_P,
N_P)$. Consider the semi-algebraic diffeomorphism $e_P: A_P \lo (\RR_{>0})^r$ defined
by  \begin{equation} \label{e_P}
e_P (a) = (a^{-\alpha_{1}}, \ldots, a^{-\alpha_{r}}) \in (\RR_{>0})^r
\subset \RR^r\;\;.
\end{equation}
Let $\ol{A_P} = [0, \infty)^r \subset \RR^r$ be the closure of
$e_P(A_P)$ in $\RR^r$. We denote by $\ol{A_{P,t}} \subset \ol{A_P}$ the closure of $e_P(A_{P,t})$. 

\medskip 
Let $$\ol{G}^{BS}= G \cup \coprod_{\bP \subset \G} (N_P \times (M_P
K))$$ be the Borel-Serre partial compactification of $G$ 
constructed in \cite[\textsection 3.2]{BJ}. The topology on
$\ol{G}^{BS}$ is such that an unbounded sequence $(y_j)_{j \in \NN}$ in
$G$ converges to a point $(n, m) \in N_P \times (M_P K)$
if and only if, in terms of the horospherical decomposition $G = N_P
\times A_P \times (M_PK)$, $y_j = (n_j, a_j, m_j)$ with $n_j \in N_P$,
$a_j \in A_P$, $m_j \in M_P K$, and the components $n_j$, $a_j$ and $m_j$
satisfy the conditions:

1) For any $\alpha \in \Phi(A_P, N_P)$, $(a_j)^\alpha \lo + \infty$,

2) $n_j \lo n$ in $N_P$ and $m_j \lo m$ in $M_P K$.

We refer to \cite[p274-275]{BJ} for the precise description of the
similar glueing between $N_P \times (M_P K)$ and $N_Q \times (M_Q K)$ for two
different parabolic subgroups $\bP \subset \bQ$.

Then:
\begin{prop} \cite[Prop.3.3]{BJ}
The embedding $N_P \times A_P \times (M_PK) = G \subset
\ol{G}^{BS}$ extends naturally to an embedding $N_P \times \ol{A_P}
\times (M_P K) \hookrightarrow \ol{G}^{BS}$.
\end{prop}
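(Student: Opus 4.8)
The plan is to write the extension down explicitly, stratum by stratum, and then check it is a homeomorphism onto its image by unwinding the definition of the topology on $\ol{G}^{BS}$. Fix once and for all the base point $x$ used in the construction of $\ol{G}^{BS}$, so that all horospherical decompositions below are taken relative to it. Recall that the $\QQ$-parabolic subgroups $\bQ$ of $\G$ containing $\bP$ are in bijection with the subsets $I\subseteq\Delta(A_P,N_P)$: to $I$ corresponds the parabolic $\bQ_I\supseteq\bP$ with split central torus $A_{Q_I}=\bigl(\bigcap_{\alpha\in\Delta(A_P,N_P)\setminus I}\ker\alpha\bigr)^\circ\subseteq A_P$, so $\bQ_\Delta=\bP$ and $\bQ_\emptyset=\G$. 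For $\bQ=\bQ_I$ one has the compatible relative horospherical data of \cite[\S 2, \S 3]{BJ}: a semidirect decomposition $N_P=N_Q\ltimes N_P^Q$ with $N_P^Q=N_P\cap M_Q$; a direct product $A_P=A_Q\times A_P^Q$ in which $A_P^Q=\bigl(\bigcap_{\alpha\in I}\ker\alpha\bigr)^\circ$ is the split center of the parabolic $\bP\cap\bM_Q$ of $\bM_Q$; and a horospherical decomposition of $M_Q$ relative to $\bP\cap\bM_Q$, compatible with that of $G$ relative to $\bP$, which I write as a real-analytic identification $[\cdot]\colon N_P^Q\times A_P^Q\times(M_PK)\to M_QK$. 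Note that $A_Q$ is central in $L_Q$, hence commutes with $M_Q$ and $N_P^Q$. Finally, via $e_P$ identify $\ol{A_P}$ with $[0,\infty)^r$ ($r=\dim A_P$) and stratify it by the locally closed pieces $S_I:=\{(x_1,\dots,x_r):x_i=0\iff i\in I\}$, indexed by $I\subseteq\{1,\dots,r\}\cong\Delta(A_P,N_P)$, so $S_\emptyset=(0,\infty)^r=e_P(A_P)$.

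On $N_P\times S_\emptyset\times(M_PK)=G$ take the map to be the given inclusion $G\subset\ol{G}^{BS}$. For $I\neq\emptyset$, write a point of $N_P\times S_I\times(M_PK)$ as $(n,x,m)$, where the nonvanishing coordinates $x_j$ ($j\notin I$) of $x$ encode an element $a'\in A_P^{Q_I}$; decompose $n=\nu\mu$ with $\nu\in N_{Q_I}$, $\mu\in N_P^{Q_I}$, and send $(n,x,m)$ to the point of the boundary component $N_{Q_I}\times(M_{Q_I}K)$ with $N_{Q_I}$-coordinate $\nu$ and $M_{Q_I}K$-coordinate $[\mu,a',m]$. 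All the decompositions used are canonical once $x$ is fixed, so this is well defined; and since distinct $I$ land in distinct boundary components (resp.\ in $G$ for $I=\emptyset$), and the coordinate maps are bijective on each stratum, the resulting map $N_P\times\ol{A_P}\times(M_PK)\lo\ol{G}^{BS}$ is injective and restricts to the identity on $G$.

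The heart of the matter is continuity (and continuity of the inverse). By the sequential description of the topology of $\ol{G}^{BS}$ recalled in the text and on \cite[pp.~274--275]{BJ}, it suffices to take a sequence $(n_k,x^{(k)},m_k)$ converging in $N_P\times\ol{A_P}\times(M_PK)$ to $(n,x,m)$ with $x\in S_I$ and show its image converges in $\ol{G}^{BS}$ to the image of $(n,x,m)$. Consider first $x^{(k)}\in S_\emptyset$, so $x^{(k)}=e_P(a_k)$ with $a_k\in A_P$; write $a_k=a_k^Q a_k'$ ($a_k^Q\in A_{Q_I}$, $a_k'\in A_P^{Q_I}$) and $n_k=\nu_k\mu_k$ as above. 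Since $A_{Q_I}$ is central in $L_{Q_I}$ it commutes with $\mu_k$ and $m_k$, so the $\bQ_I$-horospherical coordinates of the $k$-th point are precisely $(\nu_k,\,a_k^Q,\,[\mu_k,a_k',m_k])$. The convergence criterion for $N_{Q_I}\times(M_{Q_I}K)$ then asks that (i) $(a_k^Q)^\beta\to+\infty$ for every $\beta\in\Phi(A_{Q_I},N_{Q_I})$, and (ii) $\nu_k\to\nu$ and $[\mu_k,a_k',m_k]\to[\mu,a',m]$. For (i): such a $\beta$ is the restriction to $A_{Q_I}$ of some $\alpha\in\Phi(A_P,N_P)$ whose expansion in $\Delta(A_P,N_P)$ has nonnegative integral coefficients, at least one strictly positive on an $\alpha_i$ with $i\in I$ (else $\beta$ would be trivial on $A_{Q_I}$); since $x^{(k)}\to x\in S_I$ gives $a_k^{\alpha_i}=1/x^{(k)}_i\to+\infty$ for $i\in I$ and $a_k^{\alpha_j}\to 1/x_j$ bounded for $j\notin I$, we get $a_k^\alpha\to+\infty$, and as $a_k^\alpha=(a_k^Q)^\beta\cdot(a_k')^{\alpha|_{A_P^{Q_I}}}$ with the second factor bounded (because $a_k'\to a'$), (i) follows. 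Statement (ii) follows from $n_k\to n$, $a_k'\to a'$, $m_k\to m$ and the continuity — indeed semi-algebraicity or real-analyticity — of the horospherical coordinate projections and of the map $[\cdot]$. A sequence reaching $S_I$ through a shallower face $S_J$ ($J\subsetneq I$) is treated by the same computation carried out relative to $\bQ_J\supseteq\bQ_I$, using the gluing between $N_{Q_J}\times(M_{Q_J}K)$ and $N_{Q_I}\times(M_{Q_I}K)$ described in \cite[pp.~274--275]{BJ}. This proves continuity. For the inverse, proceed symmetrically: on the image $\Omega$ one reads off, on each piece, the triple $(\nu,a',(\mu,m))$, obtaining continuous maps $\Omega\to N_P$, $\Omega\to\ol{A_P}$, $\Omega\to M_PK$ whose product inverts the map; hence it is a homeomorphism onto $\Omega$, i.e.\ an embedding.

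The step I expect to be the genuine obstacle is verifying (i)--(ii) at the \emph{intermediate} faces $N_{Q_I}\times(M_{Q_I}K)$ with $\bP\subsetneq\bQ_I\subsetneq\G$: one must handle simultaneously the compatible relative horospherical decompositions for the nested pair $\bP\subset\bQ_I$, the precise gluing topology between the boundary faces from \cite[pp.~274--275]{BJ}, and the combinatorics of how roots in $\Phi(A_P,N_P)$ restrict to $A_{Q_I}$. Once that is in place, the deepest face $N_P\times(M_PK)$ — for which the sequential criterion is exactly the one spelled out in the text — and the openness of the map onto $\Omega$ present no further difficulty.
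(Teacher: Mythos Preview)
The paper does not prove this proposition: it is stated with the citation \cite[Prop.~3.3]{BJ} and no argument is given. What you have written is therefore not to be compared with anything in the present paper but rather with the original proof in Borel--Ji.

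That said, your reconstruction follows precisely the line of \cite[\S3]{BJ}: stratify $\ol{A_P}$ by the faces $S_I$ indexed by subsets $I\subset\Delta(A_P,N_P)$, send the stratum $N_P\times S_I\times(M_PK)$ to the boundary piece $N_{Q_I}\times(M_{Q_I}K)$ via the compatible relative horospherical decompositions for $\bP\subset\bQ_I$, and verify continuity using the sequential description of the topology. Your computation of the $\bQ_I$-horospherical coordinates of a point given in $\bP$-coordinates (using that $A_{Q_I}$ is central in $L_{Q_I}$, hence commutes with $N_P^{Q_I}\subset M_{Q_I}$) is correct, as is your root argument for $(a_k^Q)^\beta\to+\infty$. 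The part you flag as delicate --- convergence from a shallower face $S_J$ to a deeper $S_I$ and openness of the map --- is exactly where Borel--Ji do the work (their treatment of the gluing between $N_{Q_J}\times(M_{Q_J}K)$ and $N_{Q_I}\times(M_{Q_I}K)$ for $\bQ_I\subset\bQ_J$); your sketch of ``the same computation carried out relative to $\bQ_J\supseteq\bQ_I$'' is the right idea but would need the explicit relative horospherical formula to be fully rigorous. In short: your approach is the standard one and is essentially correct, with the intermediate-face gluing left at the level of a pointer to \cite{BJ}.
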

We denote by $G(P)$ the image of $N_P \times \ol{A_P}
\times (M_P K)$ under this embedding. It is called the corner
associated with $\bP$. As explained in \cite[Prop. 6.3]{BJ}
$\ol{G}^{BS}$ has the structure of a real-analytic manifold with
corners, a system of real analytic neighbourhood of a point $(n, m) \in N_P \times
(M_PK)$ being given by the $\ol{\FS_{U,t, W}}:= U \times \ol{A_{P, t}} \times W$, for $U$ 
a neighborhood of $n$ in $N_P$, $W$ a neighborhood of $m$ in $M_PK$
and $t>0$, see \cite[Lemma 3.10, Prop. 6.1 and Prop. 6.3]{BJ}.

The left $\G(\QQ)$-multiplication on $G$ extends to a real
analytic action on $\ol{G}^{BS}$: see \cite[Prop. 3.12]{BJ} for the
extension to a continous action and the proof of \cite[Prop. 6.4]{BJ}
for the proof that the extended action is real analytic). The
restriction of this extended action
to a neat $\Ga$ is free and properly discontinuous (see
\cite[Prop. 3.13 and Prop. 6.4]{BJ}). Then $\ol{\Gamma \backslash
  G}^{BS}:= \Gamma \backslash \ol{G}^{BS}$ is a compact real analytic
manifold with corners compactifying $\Gamma \backslash G$. The action
of any compact subgroup $M$ of $K$ on $\Gamma \backslash G$ extends to
a proper real analytic action on $\Gamma \backslash G$, hence the
quotient $\ol{S_{\Ga, G, M}}^{BS} := \Gamma \backslash \ol{G}^{BS} /M$
is a compactification of $ S_{\Ga, G, M}$ in the category of real
analytic manifolds with corners. We denote by $\ol{\pi} : \ol{G}^{BS} \lo \ol{S_{\Ga, G, M}}^{BS} $
the extension of $\pi$.

By \Cref{reduction}(1) there exist finitely many $\bP_1, \ldots, \bP_k$
parabolic subgroups of $\G$ and Siegel sets $\FS_i:= U_i \times A_{P_{i}, t_{i}}
  \times W_i$, $1 \leq
  i \leq k$, with $U_i$, $W_i$ compact semi-algebraic subsets of
$N_{P_{i}}$ and $M_{P_{i}} K$ respectively, whose images $V_i:=
\pi(\FS_{i})$, $1 \leq
  i \leq k$ cover $S_{\Ga, G, M}$.  Then the images $\ol{V_i} :=
\ol{\pi}(\ol{\FS_{i}})$ of $\ol{\FS_{i}}:= U_i \times \ol{A_{P_{i}, t_{i}}}
\times W_i \subset \ol{\G}^{BS}$, $1 \leq
  i \leq k$, cover $\ol{S_{\Ga, G, M}}^{BS}$.
The $\ol{V_i}$'s, $1 \leq i \leq k$, form an explicit finite atlas of the
$\RR_\an$-definable manifold with corners $\ol{S_{\Ga, G, M}}^{BS}$, and their open $\RR_{an}$-definable subsets $V_i$, $1
\leq i \leq k$, form an explicit finite atlas of the
$\RR_\an$-definable submanifold $S_{\Ga, G, M}$ of $\ol{S_{\Ga, G, M}}^{BS}$.

\subsubsection{Proof of \Cref{definability}(2)}

\begin{proof}[\unskip\nopunct]
Let $\FS \subset G/M$ be a Siegel set. Translating $\FS$ by an element
of $\Ga$ (notice that this translation is a semi-algebraic
transformation of $G/M$) we can assume without loss of generality that $\FS$ is (the
image in $G/M$ of) one of the $\FS_i$ of the previous paragraph. We
are thus reduced to showing that for each $i$, $1 \leq i \leq k$, the map $\pi_i: \FS_i \lo V_i $ is $\RR_{\an}$-definable, or equivalently that the composite map $\FS_i \stackrel{\pi_i}{\lo} V_i \hookrightarrow
\ol{V_i}$ is. 

This composite map
factorises as 
$ 
\xymatrix@1{
\FS_i  \ar[rr]^{1_{U_{i}} \times e_{P_{i}}
    \times 1_{W_{i}}} & &\ol{\FS_{i}}\ar[rr]^{\ol{\pi_i}} & &\ol{V_i}\;\;.
}
$
On the one hand, it follows from the definition~(\ref{e_P}) of
$e_{P_{i}}: A_{P_{i}, t_{i}}  \lo \ol{A_{P_{i}, t_{i}}} \subset \RR^{r_{i}}$ that $e_{P_{i}}$, hence also $1_{U_{i}} \times e_{P_{i}}
    \times 1_{W_{i}}$, is semi-algebraic. On the other hand, the map $\ol{\pi_i}: U_i \times
\ol{A_{P_{i}, t_{i}}} \times W_i \lo \ol{V_i}$ is a real analytic map
between compact sets, hence is $\RR_\an$-definable. This concludes the
proof that $\pi_{|\FS}: \FS \lo S_{\Ga, G, M}$ is
$\RR_\an$-definable. 

With the notations above, the set $\cF:=
\cup_{i=1}^k \FS_i$ is a semi-algebraic fundamental set for the action
of $\Ga$ on $G/M$. As each $\pi_i: \FS_i \lo S_{\Ga, G, M} $ is
definable, it follows that $\pi_{\cF}: \cF \lo S_{\Ga, G, M}$ is
$\RR_{\an}$-definable, which concludes the proof of \Cref{definability}(2).
\end{proof}

\subsection{Morphisms of arithmetic quotients are definable: proof of
  \Cref{definability}(3)}

\begin{proof}[\unskip\nopunct]

Let $f: S_{\Gamma', G', M'} \lo S_{\Gamma,G, M}$ be a morphism of
arithmetic quotients. Hence $f$ is deduced from a morphism $f: \G' \lo \G$
of semi-simple linear algebraic $\QQ$-group such that $f(M') \subset
M$ and $f(\Ga') \subset \Ga$. 

In the case where $\G' = \G$, Goresky and MacPherson show in \cite[Lemma 6.3]{GMcP}
that $f$ extends uniquely to a real analytic morphism $\ol{f} :  \ol{S_{\Gamma', G', M'} }^{BS} \lo \ol{S_{\Gamma,G,
    M}}^{BS}$. This proves \Cref{definability}(3) for $\G'=\G$.

\begin{rem}
Notice this implies that any Hecke correspondence $c_g = (c_1,
c_2)$ as in (\ref{Hecke}) has a unique real analytic extension 
\begin{equation} \label{extended Hecke}
\xymatrix{
\overline{c_g} = (\ol{c_1}, \ol{c_2}) :& \ol{S_{\Ga, G, M}}^{BS} &\ol{S_{g^{-1}\Gamma g \cap \Gamma, M}}^{BS} \ar[l]^{\ol{c_1}}
\ar[r]^{g \cdot} \ar@/_1pc/[rr]_{\ol{c_{2}}} & \ol{S_{\Gamma \cap g \Gamma g^{-1}, G, M}}^{BS}
\ar[r] &\ol{S_{\Ga, G, M}}^{BS}}\;\;,
\end{equation}
mapping boundary to boundary (\cite[Lemma 6.3]{GMcP} states only the
continuity of $\ol{c_g}$ but
the real analyticity follows from the proof of \cite[Lemma
6.3]{GMcP}). In particular Hecke correspondences on $S_{\Gamma,G, M}$
are $\RR_\an$-definable.
\end{rem}

Suppose that $f: \G' \lo \G$ is surjective. Without loss of generality
we can assume that $\G$, and then $\G'$, are adjoint. Then $\G' = \G
\times \bH$, $S_{\Ga', G', M'} = S_{\Ga' \cap G, G, M' \cap G} \times S_{\Ga' \cap H, H, M' \cap H}$, the map $f$ coincides with the
projection onto the first factor (this projection is obviously
$\RR_\an$-definable) composed with the morphism of
arithmetic quotients $i: S_{\Ga' \cap G,
  G, M' \cap G}  \lo S_{\Ga, G, M}$, which is $\RR_\an$-definable from the
case $\G' = \G$. This proves \Cref{definability}(3) when $f: \G' \lo \G$ is surjective.

We are thus reduced to proving \Cref{definability}(3) in the case
where $f: \G' \lo \G$ is a strict inclusion. In that case the morphism $f$ does not usually extend
to a real analytic morphism (or even a continuous one)
$\ol{f}$ between the Borel-Serre compactifications (in other words the
Borel-Serre compactification is not functorial). The problem is
that two parabolic subgroups $\bP_i \subset \G$, $i=1,2$, can be non conjugate
under $\Ga$ while their intersections $\bP_i \cap \G'$ are
$\Ga'$-conjugate parabolic subgroups of $\G'$.

Let $(\ol{V'_i})_{1\leq i \leq k}$ be an
$\RR_{\an}$-atlas for $\ol{S_{\Ga', G', M'}}^{BS}$ as in
\Cref{atlas}. Showing that $f: S_{\Ga', G', M'} \lo S_{\Ga, G, M}$ is
$\RR_{\an}$-definable is equivalent to showing that for each
$i$, $1 \leq i \leq k$, the restriction $f: V'_i \lo S_{\Ga, G, M}$ is definable.
As the diagram
$$
\xymatrix{
\FS_i':= U'_{i} \times A'_{P'_{i}, t'_{i}} \times W'_{i} \ar[r]^>>>>>>f
\ar[d]_{\pi'_{i}} & G \ar[d]^{\pi} \\
V'_{i} \ar[r]_f  & S_{\Ga, G, M}
}
$$
is commutative, it is enough to show that the composite
\begin{equation}  \label{composite}
\FS'_i
\stackrel{f}{\longrightarrow} G \stackrel{\pi}{\longrightarrow}
S_{\Ga, G, M}
\end{equation}
is $\RR_{\an}$-definable.

We use the following:
\begin{theor} (\cite[Theor.1.2]{Orr}) \label{orr}
Let $\G$ and $\bH$ be reductive linear $\QQ$-algebraic groups, with
$\bH \subset \G$. Let $\FS_H:= U_H \times A_{P_{H}, t} \times W_H \subset \HH(\RR)$ be a Siegel set
for $\bH$. 

Then there exists a finite set $C \subset \G(\QQ)$ and a Siegel set
$\FS:= U \times A_{P, t} \times W \subset \G(\RR)$ such that $\FS_H \subset C\cdot \FS$.
\end{theor}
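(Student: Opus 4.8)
The plan is to build, out of the parabolic $\QQ$-subgroup $\bP_H\subset\bH$ underlying $\FS_H$, a parabolic $\QQ$-subgroup $\bP\subset\G$ whose horospherical decomposition refines that of $\bP_H$; to deduce that $\FS_H$ lies in a finite union of Siegel sets of $\G$ attached to the $\G(\QQ)$-conjugates of $\bP$; and then to use the standard fact that Siegel sets attached to $\G(\QQ)$-conjugate parabolic subgroups are contained in finitely many $\G(\QQ)$-translates of a single Siegel set. After enlarging $\FS_H$ and translating it by finitely many elements of $\bH(\QQ)$ (all to be absorbed into the final $C$), I may assume its base point is adapted to $\bP_H$, i.e. that the $\theta$-stable Levi subgroup $\bL_{P_H}\subset\bP_H$ is defined over $\QQ$.

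\emph{Construction of $\bP$.} Let $\mathbf A\subset\bH$ be the maximal $\QQ$-split central torus of $\bL_{P_H}$, so $A_{P_H}=\mathbf A(\RR)^+$, and view $\mathbf A$ as a $\QQ$-split torus of $\G$. Put $\bL:=Z_{\G}(\mathbf A)$, a Levi $\QQ$-subgroup of $\G$ with split centre $\bS_P\supset\mathbf A$ and $Z_{\G}(\bS_P)=\bL$. Choose a cocharacter $\lambda\in X_*(\mathbf A)$ that is generic (so $Z_{\G}(\lambda)=\bL$) and satisfies $\langle\lambda,\alpha\rangle>0$ for all $\alpha\in\Phi(A_{P_H},N_{P_H})$, and let $\bP:=\bP(\lambda)$ be the associated parabolic $\QQ$-subgroup of $\G$, with Levi $\bL$ and unipotent radical $\bN_P$. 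Finally take the base point of $\G$ above that of $\bH$, so that $K_H=K\cap\bH(\RR)$.

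\emph{Comparison of the decompositions.} Three inclusions, immediate from the construction, then do the bookkeeping. First, $\Lie N_{P_H}$ lies in the sum of the positive $\lambda$-weight subspaces of $\Lie\G$, which is $\Lie\bN_P$; hence $N_{P_H}\subset N_P$. Second, $L_{P_H}=Z_{\bH}(\mathbf A)=\bH\cap\bL\subset\bL=L_P$, so restriction of $\QQ$-rational characters from $\bL$ to $\bH\cap\bL$ gives $M_{P_H}\subset M_P$, whence with $K_H\subset K$ one obtains $M_{P_H}K_H\subset M_PK$. Third, $\mathbf A\subset\bS_P$ gives $A_{P_H}=\mathbf A(\RR)^+\subset\bS_P(\RR)^+=A_P$. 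Consequently $\FS_H=U_H\cdot A_{P_H,t}\cdot W_H$ is contained in $U\cdot A_{P_H,t}\cdot W$, where $U$ is a bounded subset of $N_P$ and $W$ a bounded subset of $M_PK$.

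\emph{The split-torus directions — the main point.} The remaining difficulty, which I expect to be the real obstacle, is that $A_{P_H,t}$ is in general \emph{not} a Siegel cone $A_{P,t'}$ of $\G$: the restrictions to $\mathbf A$ of the simple roots in $\Delta(A_P,N_P)$ need not be nonnegative combinations of the simple roots in $\Delta(A_{P_H},N_{P_H})$, so $A_{P_H,t}$ may leave every single Siegel cone of $\G$. I would get around this by a small-parameter Weyl covering: for any $t'<1$,
\[
A_P=\bigcup_{\sigma\in W(\G,\bS_P)}A_{Q_\sigma,t'},\qquad Q_\sigma:=\dot\sigma\,\bP\,\dot\sigma^{-1},
\]
where $\dot\sigma$ runs through rational representatives of the relative Weyl group and $Q_\sigma$ through the finitely many conjugates of $\bP$ with Levi $\bL$; this holds because every point of $A_P$ lies in the closure of some Weyl chamber, where the simple roots defining that chamber take values $\ge1>t'$. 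Since $A_{P_H,t}\subset A_{P_H}\subset A_P$, any $h=uaw\in U\cdot A_{P_H,t}\cdot W$ has $a\in A_{Q_\sigma,t'}$ for some $\sigma$; writing $u=n_1a_1m_1$ in the horospherical decomposition of $\G$ for $Q_\sigma$ (with $n_1,a_1,m_1$ bounded, and $a_1$ commuting with $a$ as both lie in the centre $\bS_P$ of $L_{Q_\sigma}=\bL$) yields $h=n_1\,(a_1a)\,(m_1w)$ with $n_1$ in a fixed bounded subset of $N_{Q_\sigma}$, $a_1a\in A_{Q_\sigma,t''}$ for a smaller $t''>0$, and $m_1w$ in a fixed bounded subset of $M_{Q_\sigma}K=M_PK$; thus $h$ belongs to a fixed Siegel set of $\G$ attached to $Q_\sigma$. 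Hence $\FS_H$ is contained in a finite union of Siegel sets of $\G$ attached to the pairwise $\G(\QQ)$-conjugate parabolics $Q_\sigma$, and the usual Siegel-property manipulations (change of base point, and right translation by a rational element, are each absorbed into finitely many rational left translates of a fixed Siegel set; see \cite[\S12]{bor}, \cite[\S2]{BJ}) convert this into an inclusion $\FS_H\subset C\cdot\FS$ with $\FS$ a single Siegel set of $\G$ and $C\subset\G(\QQ)$ finite, as wanted. A quantitative alternative, used in \cite{Orr}, replaces the Weyl-covering step by explicit polynomial comparisons between Siegel coordinates and heights of lattice points in a faithful $\QQ$-representation of $\G$, the set $C$ then recording the finitely many shapes a reduced basis may take.
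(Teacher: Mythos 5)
First, a framing remark: the paper does not prove this statement at all — it is quoted as Orr's Theorem 1.2 and used as a black box in the proof of Theorem 1.1(3) — so what you are really attempting is a new structural proof of Orr's theorem, and the attempt has a genuine gap at its core, in the ``Weyl covering'' step. The torus $\bS_P$ (split centre of $\bL=Z_{\G}(\mathbf{A})$) is in general \emph{not} a maximal $\QQ$-split torus of $\G$; the chambers of $X_*(\bS_P)\otimes\RR$ correspond to \emph{all} parabolic subgroups with Levi $\bL$, and these fall into several $\G(\QQ)$-conjugacy classes, while the relative Weyl group $N_\G(\bS_P)/Z_\G(\bS_P)$ can be trivial. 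Since the $Q_\sigma=\dot\sigma\,\bP\,\dot\sigma^{-1}$ all have the same type, their cones cannot cover $A_P$ in general. Concretely, take $\G=\SL_3$ and $\bH=\mathbf{A}=\{\diag(t,t,t^{-2})\}$, so that $\Delta(A_{P_H},N_{P_H})=\emptyset$ and $\FS_H$ is essentially all of $\mathbf{A}(\RR)^+$: here $\bL$ is the $(2,1)$-Levi, $\bS_P=\mathbf{A}$, the relative Weyl group is trivial, and your union reduces to the single cone $A_{\bP,t'}$, which misses an entire half-line of $A_{P_H}$; covering it requires the opposite parabolic, which is not conjugate to $\bP$. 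This particular defect is repairable (let $Q$ run over all parabolic $\QQ$-subgroups containing $\bL$, a finite set, all defined over $\QQ$ because $\bS_P$ is $\QQ$-split), but then your step 3 must absorb Siegel sets attached to non-conjugate parabolics and varying base points into $C\cdot\FS$, which the conjugation-by-$\dot\sigma$ bookkeeping as written does not do.

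The more serious gap is the displayed manipulation $h=n_1(a_1a)(m_1w)$. Passing from $n_1a_1m_1\cdot a\cdot w$ to this expression requires $m_1$ to commute with $a$; but $m_1$ lies in $M_{Q_\sigma}K$, and precisely in the situations where a chamber switch is needed the element $u\in N_P$ does \emph{not} lie in $Q_\sigma$, so $m_1$ has a nontrivial $K$-component, which does not commute with $a$. Put differently: writing $h=a\,(a^{-1}ua)\,w=a\omega$ with $\omega$ bounded and then decomposing $\omega$ with respect to $Q_\sigma$, the actual $N_{Q_\sigma}$-component of $h$ is $\Ad(a)$ applied to a bounded element of $N_{Q_\sigma}$, and $\Ad(a)$ is \emph{expanding} on $\Lie N_{Q_\sigma}$ for $a\in A_{Q_\sigma,t'}$; so the boundedness of the unipotent coordinate — which is exactly what membership in a fixed Siegel set for $Q_\sigma$ requires — is unproven. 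This is not a cosmetic bookkeeping issue: controlling bounded unipotent and compact factors pushed across a torus element that has left the original chamber, at the cost of finitely many rational translates, \emph{is} the content of Orr's theorem, and it is why his proof goes through a faithful representation and height/reduced-basis estimates for lattices (the ``quantitative alternative'' your last sentence defers to). As written, your argument establishes the statement only in the easy regime where $A_{P_H,t}$ stays inside the cones of conjugate parabolics and $u$ lands in $Q_\sigma$; the general case is left open.
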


Applying this result to $\G' \subset \G$ and the Siegel set $\FS'_i$
of $\G'$, there exists a finite set $C_i \subset \G(\QQ)$
and a Siegel set $\FS_i:= U_i \times A_{P_{i}, t_{i}} \times W_i$ such that the
composition~(\ref{composite}) factorizes as
\begin{equation} \label{factor}
\FS'_i \lo C_i \cdot \FS_i \stackrel{\pi}{\longrightarrow}
S_{\Ga, G, M}\;\;.
\end{equation}

The inclusion $\FS'_i \lo C_i \cdot \FS_i$ is semi-algebraic. The map
$C_i \cdot \FS_i \stackrel{\pi}{\longrightarrow}
S_{\Ga, G, M}$ is $\RR_{\an}$-definable by
\Cref{definability}(2). 

This finishes the proof of \Cref{definability}(3).
\end{proof}

\section{Definability of the period map}

\subsection{Reduction of \Cref{definability period map}
  to a local statement}

In the situation of \Cref{definability period map}, 
let $S \subset \ol{S}$ be a smooth compactification such that $\ol{S}
-S$ is a normal
crossing divisor. Let $(\overline{S_i})_{1\leq i \leq m}$ be a finite
open cover of $\ol{S}$ such that the pair $(\ol{S_{i}}, S_i := S\cap
\overline{S_i})$ is biholomorphic to $(\Delta^n, (\Delta^*)^{r_{i}}
\times \Delta^{l_i:= n-r_{i}})$. To show that the period map $\Phi_S: S \lo \Hod^0(S,
\VV) = S_{\Ga, G, M}$ is $\RR_{\an, \exp}$-definable, it is enough to
show that for each $i$, $1 \leq i \leq m$, the restricted period map
\begin{equation} 
{\Phi_S}_{|S_{i}}: S_i= (\Delta^*)^{r_{i}} \times \Delta^{l_{i}} \lo S_{\Ga, G,
  M}
\end{equation}
is $\RR_{\an, \exp}$-definable. Without loss of generality we can
assume that $r_i= n$ and $l_i=0$ by allowing some factors with trivial
monodromies. Finally we are reduced to proving:

\begin{theor} \label{polydisc}
Let $\VV \rightarrow (\Delta^*)^n$ be a polarized variation of pure
Hodge structures of weight $k$
over the punctured polydisk $(\Delta^*)^n$, with period map $\Phi:
(\Delta^*)^n \lo S_{\Ga, G, M}$.
Then $\Phi$ is $\RR_{\an, \exp}$-definable.
\end{theor}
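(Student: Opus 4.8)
The plan is to reduce the definability of $\Phi$ on $(\Delta^*)^n$ to the interaction of two already-established facts: the $\RR_{\an}$-definability of the uniformization map on Siegel sets (\Cref{definability}(2)) and the finiteness statement for Siegel sets along the image of a period map with unipotent monodromy (\Cref{finitely many Siegel}). The standard first reduction, via Borel's monodromy theorem and a finite base change $z_j \mapsto z_j^{e}$ (which is $\RR_{\alg}$-definable and surjective on the relevant pieces), lets us assume the local monodromies around the punctures are unipotent; since a finite surjective algebraic map is definable and definability descends along it, proving the statement after this base change suffices. So from now on $\Phi : (\Delta^*)^n \to S_{\Ga,G,M}$ has unipotent monodromy.

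Next, lift $\Phi$ to $\tilde\Phi : \FH^n \to G/M$ along the universal cover $\FH^n \to (\Delta^*)^n$, $\bz \mapsto (e^{2\pi i z_1},\dots,e^{2\pi i z_n})$. Fix a fundamental domain $F \subset \FH^n$ for the deck group $\ZZ^n$ of the form $\{ -1/2 \le \Reel z_j < 1/2\}$; on $F$ the covering map $q : F \to (\Delta^*)^n$ is a real-analytic bijection, and — crucially — $q^{-1}$ is $\RR_{\an,\exp}$-definable by the explicit formulas $\Reel z_j = \tfrac{1}{2\pi}\arg(w_j)$, $\Ima z_j = -\tfrac{1}{2\pi}\log|w_j|$, which involve $\log$ (hence $\exp$) and restricted analytic functions ($\arg$). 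I would then split $\FH^n$ (and hence $F$) into two regions: a ``bounded-height'' region where $\Ima z_j \le \eta$ for at least one $j$, which covers a relatively compact piece of $(\Delta^*)^n$ staying away from the divisor in at least one coordinate, and the ``deep'' region $R := \{\bz \in F : \Ima z_j \ge \eta \ \forall j\}$. On the bounded-height region the map $\Phi$ extends continuously (in fact real-analytically) across the relevant part of $\ol S = \Delta^n$ after using nilpotent orbit/removable singularity arguments in the remaining directions — more simply, one covers it by finitely many pieces on which an induction on $n$ applies since at least one puncture is ``turned off''; these pieces are relatively compact and $\Phi$ is real-analytic there, hence $\RR_{\an}$-definable. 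The essential case is the deep region $R$.

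On $R$ we invoke \Cref{finitely many Siegel} with the constants $C = 1/2$ and the given $\eta$: there are finitely many Siegel sets $\FS_1,\dots,\FS_N \subset G/M$ with $\tilde\Phi(R) \subset \bigcup_{i} \FS_i$. For each $i$, consider $R_i := \tilde\Phi^{-1}(\FS_i) \cap R$. Because the composite $\pi \circ \tilde\Phi = \Phi \circ q$ on $R$, we have $\Phi|_{q(R_i)} = \pi|_{\FS_i} \circ \big(\tilde\Phi|_{R_i}\big) \circ \big(q^{-1}|_{q(R_i)}\big)$. The rightmost factor $q^{-1}$ is $\RR_{\an,\exp}$-definable as noted; the leftmost factor $\pi|_{\FS_i}$ is $\RR_{\an}$-definable by \Cref{definability}(2) (each $\FS_i$ being a semi-algebraic Siegel set). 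The remaining issue is the middle factor $\tilde\Phi|_{R_i}$, i.e.\ definability of the lifted period map itself on a piece of a product of half-planes landing in a Siegel set: here I would use the $SL_2^n$-orbit theorem of \cite{CKS} (equivalently the nilpotent orbit theorem in the several-variable form) to write $\tilde\Phi(\bz)$ explicitly as $\exp\!\big(\sum_j z_j N_j\big)\cdot \psi(e^{2\pi i z_1},\dots,e^{2\pi i z_n})$ for a holomorphic map $\psi$ extending real-analytically over the origin, with $N_j$ the monodromy logarithms; the exponential-of-a-linear-form factor is manifestly $\RR_{\exp}$-definable on $R$ (the entries are polynomials in the $z_j$ composed with $\exp$, on a definable domain where $\Ima z_j$ is bounded below — so the relevant $\exp(2\pi i z_j)$ have bounded modulus and lie in a relatively compact set), and $\psi$ composed with the coordinatewise $\exp(2\pi i z_j)$ is real-analytic on a relatively compact set, hence $\RR_{\an}$-definable. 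Composing, $\tilde\Phi|_{R_i}$ is $\RR_{\an,\exp}$-definable, so $\Phi|_{q(R_i)}$ is, and since $R = \bigcup_i R_i$ with finitely many $i$ and definability is preserved by finite unions of definable pieces (the $R_i$ being definable, as preimages of Siegel sets under a definable map), $\Phi|_{q(R)}$ is $\RR_{\an,\exp}$-definable. Putting the bounded-height region and the deep region together gives the theorem.

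The main obstacle, and the technical heart of the argument, is controlling $\tilde\Phi$ on the deep region: one needs \Cref{finitely many Siegel} (which is exactly where the multivariable $SL_2^n$-orbit theorem of Cattani--Kaplan--Schmid enters, the one-Siegel-set statement being false for $n \ge 2$) to even make sense of pulling back along finitely many Siegel charts, and then one needs the asymptotic description of the period map precise enough to see that, coordinate-by-coordinate, the growth is at worst polynomial-times-exponential so that it becomes definable in $\RR_{\an,\exp}$ rather than merely $\RR_{\an}$. Everything else — the unipotent reduction, the $q^{-1}$ formulas, the induction handling the bounded-height strata — is routine bookkeeping in o-minimality.
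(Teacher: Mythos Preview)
Your argument is essentially the paper's: reduce to unipotent monodromy, lift to $\FH^n$, show via the nilpotent orbit theorem that $\tilde{\Phi}(\bz)=\exp(\sum_j z_j N_j)\cdot\Psi(p(\bz))$ is $\RR_{\an,\exp}$-definable on a fundamental strip, invoke \Cref{finitely many Siegel} to land in finitely many Siegel sets $\FS_i$, and then compose with the $\RR_{\an}$-definable $\pi_{|\FS_i}$ from \Cref{definability}(2). Two small simplifications relative to what you wrote: the bounded-height/deep split with an induction on $n$ is not needed---the paper simply works on the strip $\{|\Reel z_j|<1/2,\ \Ima z_j>1\}$ throughout (implicitly shrinking the polydisk, harmless in view of the reduction preceding \Cref{polydisc}); and since the $N_j$ are nilpotent, $\exp(\sum_j z_j N_j)$ is \emph{polynomial} in the $z_j$, so the only genuine use of the real exponential is in the covering map $p$ (equivalently your $q^{-1}$), not in the ``middle factor'' $\tilde{\Phi}$.
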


\subsection{Proof of \Cref{polydisc} assuming
  \Cref{finitely many Siegel}}

\begin{proof}[\unskip\nopunct]
Let us fix $x_0$ a basepoint in  $(\Delta^*)^n$. We denote by $V_\ZZ$ the fiber
$\VV_{x_{0}}$ of $\VV$ at $x_0$ (modulo torsion) and define $V:= V_\ZZ
\otimes_\ZZ \QQ$.
It follows from Borel's monodromy
theorem \cite[Lemma (4.5)]{Schmid} that the monodromy transformation $T_i \in
\G(\ZZ) \subset \GL(V_\ZZ)$, $1\leq i \leq n$, of the local system $\VV$, corresponding to counterclockwise simple
circuits around the various punctures, are
quasi-unipotent. Replacing $(\Delta^*)^n$ by a
finite \'etale cover if necessary we can assume without loss of
generality that all the $T_i$'s are unipotent. Let $N_i \in
\Fg_\QQ$, $1 \leq i \leq n$, be the logarithm of $T_i$; this is a nilpotent element in $\Fg_\QQ$.

Let $\FH$ denote the Poincar\'e upper-half plane and $\exp(2 \pi i
\cdot): \FH \lo \Delta^*$ the uniformizing map of $\Delta^*$. Let $\FS_\FH
\subset \FH$ be the usual Siegel fundamental set $\{ (x, y)
\in \FH\; | \; y>1, -1/2 < x< 1/2\}$. 
Consider the commutative diagram
$$ 
\xymatrix{
\FS_\FH^{n}  \subset \FH^{n}
\ar[d]_{p= \exp(2 \pi i \cdot)^{n}}
\ar[r]^>>>>>>{\tilde{\Phi}} & D= G/M \ar[d]^{\pi} \\
(\Delta^*)^{n} \ar[r]_{\Phi} & S_{\Ga, G,M}\;\;, 
}
$$
where $\tilde{\Phi}$
is the lifting of $\Phi$ to the universal cover.
As the restriction $\exp(2 \pi i \cdot)_{|\FS_{\FH}}$ is $\RR_{\an,
  \exp}$-definable, the map $p_{|\FS_\FH^{n}}|: {\FS_\FH}^{n} \lo
(\Delta^*)^{n}$ is $\RR_{\an,
  \exp}$-definable. We are reduced to proving that the composition
$
\xymatrix@1{{\FS_\FH}^{n} \ar[r]^{\ti{\Phi}} & G/M \ar[r]^\pi
&S_{\Ga, G,M}}$ is $\RR_{\an, \exp}$-definable.

\begin{lem} \label{l1}
The map $\tilde{\Phi} : {\FS_\FH}^{n}\lo G/M$
is $\RR_{\an, \exp}$-definable.
\end{lem}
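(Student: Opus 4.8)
The plan is to untwist the multivaluedness of $\tilde\Phi$ using the monodromy logarithms and then to invoke the several-variable nilpotent orbit theorem of Schmid \cite{Schmid} and Cattani-Kaplan-Schmid \cite{CKS}. First I would record that the monodromy operators $T_1,\dots,T_n\in\G(\ZZ)$, being commuting unipotents, have pairwise commuting nilpotent logarithms $N_1,\dots,N_n\in\Fg_\QQ$. Writing $q_j:=\exp(2\pi i z_j)$ and letting $\check D$ denote the compact dual of $D=G/M$ (a projective homogeneous $\G(\CC)$-variety containing $D$ as an open orbit), the untwisted map $\bz\mapsto\exp\bigl(-\sum_{j=1}^n z_jN_j\bigr)\cdot\tilde\Phi(\bz)$ has values in $\check D$ and, using $\tilde\Phi(z_1,\dots,z_j+1,\dots,z_n)=T_j\cdot\tilde\Phi(\bz)$ together with $[N_i,N_j]=0$, is invariant under each $z_j\mapsto z_j+1$. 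Hence it descends to a holomorphic map $\Psi\colon(\Delta^*)^n\to\check D$, and the content of the nilpotent orbit theorem is exactly that $\Psi$ extends to a holomorphic map $\psi\colon\Delta^n\to\check D$. This yields, on all of $\FH^n$, the identity
\[ \tilde\Phi(\bz)=\exp\Bigl(\sum_{j=1}^n z_jN_j\Bigr)\cdot\psi\bigl(\exp(2\pi i z_1),\dots,\exp(2\pi i z_n)\bigr), \]
where ``$\cdot$'' denotes the algebraic action of $\G(\CC)$ on $\check D$.

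Next I would check that, restricted to $\FS_\FH^n$, each of the three operations on the right-hand side is $\RR_{\an,\exp}$-definable; write $z_j=x_j+iy_j$, so that $-1/2<x_j<1/2$ and $y_j>1$ there. Since $\sum_j z_jN_j$ is nilpotent, $\exp\bigl(\sum_j z_jN_j\bigr)$ is a fixed polynomial in $z_1,\dots,z_n$ with coefficients in $\mathrm{End}(V_\CC)$; as a map $\FS_\FH^n\to\G(\CC)\subset\GL(V_\CC)$ its matrix entries are polynomials in the real variables $x_j,y_j$, hence it is $\RR_{\alg}$-definable. The map $z_j\mapsto\exp(2\pi i z_j)=e^{-2\pi y_j}(\cos 2\pi x_j+i\sin 2\pi x_j)$ is $\RR_{\an,\exp}$-definable on $\FS_\FH$, being the product of the $\RR_{\exp}$-definable function $e^{-2\pi y_j}$ with $\cos 2\pi x_j$ and $\sin 2\pi x_j$, which are restricted analytic (hence $\RR_{\an}$-definable) on the bounded interval $(-1/2,1/2)$; moreover $|\exp(2\pi i z_j)|=e^{-2\pi y_j}<e^{-2\pi}$, so this map sends $\FS_\FH^n$ into the compact polydisc $K:=\{(q_1,\dots,q_n):|q_j|\le e^{-2\pi}\text{ for all }j\}\subset\Delta^n$. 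As $\psi$ is holomorphic on $\Delta^n$, it is real-analytic on a neighbourhood of the small compact set $K$; fixing a projective embedding of $\check D$, covering $\psi(K)$ by finitely many standard affine charts and $K$ correspondingly by finitely many compact boxes, $\psi|_K$ is given on each box by holomorphic (hence restricted analytic) functions, so it is $\RR_{\an}$-definable. Finally the action map $\G(\CC)\times\check D\to\check D$ is a morphism of complex algebraic varieties, in particular $\RR_{\alg}$-definable. Composing, the displayed identity exhibits $\tilde\Phi|_{\FS_\FH^n}$ as a composite of $\RR_{\an,\exp}$-definable maps into $\check D$; since $\tilde\Phi$ in fact takes values in $D=G/M$, which embeds in $\check D$ semi-algebraically for the structure of \Cref{G/M semi}, I conclude that $\tilde\Phi\colon\FS_\FH^n\to G/M$ is $\RR_{\an,\exp}$-definable.

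The only genuinely non-formal ingredient above is the holomorphic extendability of the untwisted map $\Psi$ across $\bigcup_j\{q_j=0\}$, i.e.\ the several-variable nilpotent orbit theorem; everything else is routine o-minimal bookkeeping, the essential elementary points being that a nilpotent matrix exponential is polynomial and that $\exp(2\pi i z)$ on a horizontal half-strip with $\Ima z$ bounded below is $\RR_{\an,\exp}$-definable with image confined to a compact polydisc, on which the holomorphic datum $\psi$ reduces to a restricted analytic function. (The genuinely delicate part of \Cref{polydisc} is not \Cref{l1} but the ensuing descent along $\pi\colon G/M\to S_{\Ga,G,M}$, which is not globally definable: that is where \Cref{finitely many Siegel} enters, confining $\tilde\Phi(\FS_\FH^n)$ to finitely many Siegel sets on each of which $\pi$ is $\RR_{\an}$-definable by \Cref{definability}(2).)
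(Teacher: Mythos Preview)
Your proof is correct and follows essentially the same approach as the paper: both invoke the nilpotent orbit theorem to write $\tilde\Phi(\bz)=\exp(\sum_j z_jN_j)\cdot\Psi(p(\bz))$ with $\Psi$ holomorphic on $\Delta^n$, then observe that the nilpotent exponential is polynomial, that $p_{|\FS_\FH^n}$ is $\RR_{\an,\exp}$-definable, that $\Psi$ restricted to a relatively compact set is restricted analytic, and that the $\G(\CC)$-action on $\check D$ is algebraic. Your version spells out more of the routine o-minimal bookkeeping (the decomposition of $e^{2\pi i z}$, the affine-chart cover of $\psi(K)$, the semi-algebraicity of $D\subset\check D$), but the argument is the same.
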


\begin{proof}
The nilpotent orbit theorem \cite[(4.12)]{Schmid} states that
(after maybe shrinking the polydisk) the map
$\tilde{\Phi}: {\FS_\FH}^{n}  \lo G/M$ is of
the form $$\tilde{\Phi} (z) = \exp (\sum_{j=1}^{n}  z_j N_j) \cdot \Psi(
p( z))$$
for $\Psi: \Delta^n \lo \check{D}$ a
holomorphic map and $\check{D} \supset D$ the compact dual of $D$. The
map $\Psi$ is the restriction to a relatively compact set of a real
analytic map. As $p_{|{\FS_\FH}^{n}}: {\FS_\FH}^{n} \lo
\Delta^n$ is $\RR_{\an, \exp}$-definable, it follows that $(z \mapsto
\Psi(p(z))$ is $\RR_{\an, \exp}$-definable.

The action of $\G(\CC)$ on $\hat{D}$ is definable, $D$ is a
semi-algebraic subset of $\hat{D}$ and $\exp (\sum_{j=1}^n  z_j
N_j)$ is polynomial in the $z_j$'s as the $N_j$'s are nilpotent. 

Hence the result.
\end{proof}

\begin{rem}
Notice that \Cref{l1} appears also in \cite[Lemma 3.1]{BaT}.
\end{rem}

It moreover follows from \Cref{finitely many Siegel}, proven in the next
section, that there exist finitely many Siegel sets $\FS_i$ $(i \in I)$
for $G/M$ such that $\tilde{\Phi}({\FS_\FH}^{n}) \subset \bigcup_{i
  \in I}\FS_i$. As $\pi_{|\FS_i}: \FS_i \lo
S_{\Ga, G, M}$, $i \in I$, is $\RR_{\an}$-definable by 
\Cref{definability}(2), and the set $I$ is finite, we deduce from \Cref{l1} that 
$ \pi \circ \tilde{\Phi} : {\FS_\FH}^{n}
\lo S_{\Ga, G,M}$
is $\RR_{\an, \exp}$-definable. This concludes the proof of
\Cref{polydisc}, hence of
\Cref{definability period map}, assuming \Cref{finitely many Siegel}.
\end{proof}

\subsection{Reminders on the ${SL_2}^n$-orbit theorem} \label{asymptote}

The proof of \Cref{finitely many Siegel} will follow from the
understanding of the asymptotic behavior of the local period map $\Phi:
(\Delta^*)^n \lo S_{\Ga, G, M}$. Let $F:= \Psi(0) \in \check{D}$ be
the {\em limiting Hodge filtration}. We denote by  
$$ \theta (\bz) = \exp(\sum_{j=1}^n z_j N_j ) \cdot F$$ the
corresponding {\em nilpotent orbit} $\theta: \CC^n \lo \check{D}$ in the sense of
\cite[(1.14)]{CKS}. The Nilpotent Orbit Theorem of Schmid
states that $\theta$ approximates $\tilde{\Phi}$
in a sharp way (see \cite[(4.12)]{Schmid} and the refined
\cite[(1.15)]{CKS} for the precise statement).  The ${SL_2}^n$-orbit
theorem of \cite{Schmid}, \cite{Ka}, \cite{CKS} and
\cite{CKS87} approximates the nilpotent orbit itself by finitely many ${SL_2}^n$-orbits.

In this section we recall what we need from the ${SL_2}^n$-orbit
theorem. We continue with the notations used in the proof of
\Cref{polydisc} and introduce the notations we need from
\cite{CKS}. Notice that the theory developed in \cite{CKS} (hence the
content of this section) works for any nilpotent orbit, not
necessarily rational. The rationality will be used in the next
section only.

Recall that if $N$ is a nilpotent operator on a vector space $V$ over
a field of characteristic zero, there exists a unique increasing
filtration $W:= W(N)$ of $V$, called the {\em monodromy filtration of
  $N$}, such that $N(W_l) \subset W_{l-2}$ and $N: \Gr^W_l \lo
\Gr^W_{-l}$ is an isomorphism \cite[(1.6.1)]{De74}. Let $C:= \{\sum_{i=1}^n \lambda_j N_j,
\lambda_j \in \RR_{>0}\}$. In \cite[(3.3)]{CK82} Cattani and Kaplan prove
that every $N \in C$ defines the same filtration $W(C):= W(N)$. Moreover, for $J$ a subset of $\{1, \cdots, n\}$, let
$C_J:= \{ \sum_{j \in J} \lambda_j N_j, \lambda_j \in \RR_{>0}\}$ be a
facet of the closure $\ol{C}$; then for every $N \in C_J$ and $N' \in
C_{J'}$, for $N'' \in C_{J \cup J'}$, $W(N'')$ is the weight
filtration of $N$ relative to $W(N')$ in the sense of
\cite[(1.6.13)]{De74}: $N$ preserves $W(N')$, $N \cdot
W(N'')_l \subset W(N'')_{l-2}$ and the filtration induced by $W(N'')$
on $\Gr^{W(N')}_l$ is the monodromy filtration of $N$ on
$\Gr^{W(N')}_l$ centered at $l$ (i.e. $N^k: \Gr^{W(N'')}_{l+k} \Gr^{W(N')}_l
\simeq \Gr^{W(N'')}_{l-k} \Gr^{W(N')}_l$). 

The $SL_2$-orbit theorem in one variable of Schmid \cite{Schmid} implies that $(W(C_J)[-k],
F)$ is a mixed Hodge structure polarized by any element $N$ of $C_J$ (in
the sense of \cite[(2.26)]{CKS}). The ${SL_2}^n$-orbit 
\cite[Theor. 4.20]{CKS} states a deep compatibility between these various mixed
Hodge structures when $J$ varies {\em following a fixed ordering of
the variables in $(\Delta^*)^n$}, which implies that the nilpotent orbit $\theta$ can be
sharply approximated by an ${SL_2}^n$-orbit on a sector of
$(\Delta^*)^n$ associated to this ordering. 

Let $\sigma$ be an ordering of the variables in $\CC^n$. For $1 \leq r \leq
n$ we shall follow the convention in \cite{CKS} and use a bold index $\br$ to label objects associated to the
$r$-th first variables with respect to $\sigma$. Let $C_{\br}:= \{ \sum_{j =1}^r \lambda_j N_j, \lambda_j \in
\RR_{>0}\}$ and let $W^{\br}:= W(C_\br)[-k]$. One defines Hodge filtrations $\tilde{F}_{\br} \in
\check{D}$ for $1 \leq r \leq n$ by descending induction on $r$ as
follows.
We define $\ti{F}_{\bn}$ to be the Hodge filtration of the $\RR$-split mixed Hodge
structure canonically associated to $(W^\bn, F)$ by the
$SL_2$-orbit theorem in one variable \cite[(3.30)]{CKS}. The
$\RR$-split mixed Hodge structure $(W^\bn, \ti{F}_\bn)$ is
polarized by every $N \in C_\bn$, hence the mapping $$(z_1, \cdots,
z_{n-1}) \mapsto \exp(\sum_{j=1}^{n-1} z_j N_j) \cdot (e^{i N_n}
\tilde{F}_\bn)$$ is a nilpotent orbit in $(n-1)$-variables, which takes
values in $D$ for $\Ima(z_j)>0$. In particular $(W^{\mathbf{n-1}}, e^{i N_n}
\tilde{F}_{\bn})$ is a MHS polarized by every $N \in C_{\mathbf{n-1}}$. Let
$\tilde{F}_{\mathbf{n-1}}$ be the Hodge filtration of the $\RR$-split MHS
canonically associated to it by the $SL_2$-orbit theorem in one
variable. Continuing this way one obtains, for each $1\leq r \leq n$, $\tilde{F}_\br \in \check{D}$
such that $(W^\br, \tilde{F}_\br)$ is the $\RR$-split
MHS canonically associated to $(W^{\br}, e^{i N_{r+1}}
\tilde{F}_{\mathbf{r+1}})$. It is polarized by every $N \in C_\br$.

For each $1 \leq r \leq n$ let $\hat{Y}_{\br} \in \Fg_\RR$ be the semi-simple
endomorphism grading the $\RR$-split MHS $(W^\br, \tilde{F}_\br)$
in the sense of \cite[(2.27)]{CKS}: if $V_\CC = \bigoplus I^{p,q}$ is
the Deligne's splitting \cite[(2.13)]{CKS} of  $(W(C_\br)[-k],
\tilde{F}_\br)$ (hence $I^{p,q} = \ol{I^{q,p}}$ as it is $\RR$-split)
then $\hat{Y}_{\br}$ act by multiplication by $l -k$ on the real subspace $V_l^{\br}:= \bigoplus_{p+q =l}
  I^{p,q}$. A crucial fact is that the endomorphisms
  $(\hat{Y}_{\br})_{1 \leq r \leq n}$ commute (see
  \cite[(4.37)(ii)]{CKS}), hence they define a
real multi-grading $V_\RR = \bigoplus_{l_1, \cdots, l_n} V_{l_{1},
  \cdots, l_{n}}$, where $V_{l_{1},\cdots, l_{n}} := V_{l_{1}}^{\mathbf{1}} \cap
\cdots V_{l_{n}}^{\bn} \simeq
\Gr^{W^{\bn}}_{l_{n}}(\Gr^{W^{\mathbf{n-1}}}_{l_{n-1}}(\cdots(\Gr^{W^{{\mathbf{1}}}}_{l_{1}})
\cdots))$ (denoted $U_{l_{1},\cdots, l_{n}}$ in \cite[(2.18)]{CKS87}).

For each $1\leq j \leq n$, let $\hat{N}_j^- \in \Fg_\RR$ be the
component of $N_j$ in the subspace $\bigcap_{r=1}^{j-1} \ker (\ad\,
\hat{Y}_{\br})$ relative to the decomposition of $\Fg_\RR$ in
eigenspaces of the commuting set of semisimple endomorphisms $(\ad\,
\hat{Y}_{\br})_{1 \leq r \leq j-1}$. 

For $1 \leq j \leq n$ define $\hat{Y}_j \in \Fg_\RR$ by the system of linear
equations (see \cite[(4.18)]{CKS})
\begin{equation} \label{relation}
\forall\, 1 \leq r \leq n, \quad \hat{Y}_{\br} = \sum _{j=1}^r
\hat{Y}_j \;\;.
\end{equation}
The pair $(\hat{N}_j^-,
\hat{Y}_j)$ satisfies $[\hat{Y}_j, \hat{N}_j^-] = -2 \hat{N}_j^-$
hence by the Jacobson-Morosov theorem there exists a unique element
$\hat{N}_j^+ \in \Fg_\RR$ such that $(\hat{N}_j^-, \hat{Y}_j,
\hat{N}_j^+)$ is an $\mathfrak{sl}_2$-triple. This triple defines a Lie
algebra homomorphism
$$ \rho_{j, *}: \mathfrak{sl}_{2, \RR}  \lo \Fg_\RR\;\;.$$
The ${\SL_2}^n$-orbit theorem \cite[(4.20)]{CKS} claims that the triples $(\hat{N}_j^-, \hat{Y}_j,
\hat{Y}^+)_{1 \leq j \leq n}$ are commuting two by two, hence there is 
unique algebraic group homomorphism
\begin{equation}  \label{morph}
\rho: {\SL_{2,\RR}}^n \lo \G_\RR
\end{equation} such that $\rho_{j, *}$ coincide with the
restriction to the $j$-th factor of the differential $\rho_*$ of
$\rho$; moreover the $\SL_2(\RR)^n$-orbit passing through
$\exp^{iN_1}\tilde{F}_1$ approximates sharply the nilpotent orbit $\theta$ on any
domain $C_{\sigma, \eta, \varepsilon}:= \{ \bz := \bx + i \by \in
\FH^n\;|\; y_1 \geq \varepsilon y_2, \ldots, y_{n-1}\geq \varepsilon y_n \,;\,
y_n \geq \eta \}$. For the convenience of the reader we recall the
precise statement of the second half of \cite[(4.20)]{CKS}:

\begin{theor} \cite[(4.20)(v)-(viii)]{CKS} \label{multiorbit}
 There exist $G$-valued functions $g_{\br}(y_1, \cdots, y_t)$ defined
 for $y_j>0$ if $1 \leq r \leq n-1$ and for $y_j>\alpha$, $\alpha \in
 \RR$ if $r=n$ such that:

\begin{itemize}
\item[(v)] For $j \leq r \leq n$, $g_\bj(y_1, \cdots, y_j)$ is  a
  $(0,0)$-morphism of the MHS $(W^{\br}, \tilde{F}_{\br})$.
\item[(vi)] $\sum_{s=1}^r y_s N_s = \Ad (\prod_{j=r-1}^1 g_\bj
  (y_{1}/y_{j+1}, \cdots, y_{j}/y_{j+1})) \sum_{r=1}^r
  y_s \hat{N}_s^-$.
\item[(vii)] $\exp(i \sum_{j=1}^n y_j N_j) \cdot F = (\prod_{r=n}^1
  (g_\br(y_{1}/y_{r+1}, \cdots, y_{j}/y_{r+1}))) \exp(
  i \sum_{j=1}^n y_j \hat{N}_j^-) \cdot \tilde{F}_\bn$.
\item[(viii)] The functions $g_\br(y_1, \cdots, y_r)$ and $g_\br(y_1,
  \cdots, y_r)^{-1}$ have power series expansions in non-positive
  powers of $y_1/y_2$, $y_2/y_3$, ...., $y_r$ with constant term $1$
  and convergent in regions of the form $C_{\sigma, \eta,
    \varepsilon}$.
\end{itemize}
\end{theor}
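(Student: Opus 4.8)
This is the second half of the $SL_2^n$-orbit theorem, and the plan is to reconstruct the iterated construction of Cattani--Kaplan--Schmid by induction on the number $n$ of variables, using Schmid's one-variable $SL_2$-orbit theorem both as the base case and as the engine of the inductive step. We freely use the part of \cite[(4.20)]{CKS} recalled above, namely that the gradings $\hat{Y}_{\br}$ and the $\mathfrak{sl}_2$-triples $(\hat{N}_j^-,\hat{Y}_j,\hat{N}_j^+)$ commute pairwise and assemble into the homomorphism $\rho$; the point of (v)--(viii) is the \emph{quantitative} comparison of the nilpotent orbit with the resulting $SL_2(\RR)^n$-orbit, encoded by the functions $g_{\br}$. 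We fix the ordering $\sigma=(1,\dots,n)$.

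\textbf{Base case $n=1$.} For the one-variable nilpotent orbit $\zeta\mapsto e^{\zeta N}\cdot F$, $N\in C_{\mathbf 1}$, Schmid's theorem \cite{Schmid} (cf.\ \cite[(3.30)]{CKS}) produces the $\RR$-split MHS $(W^{\mathbf 1},\tilde F_{\mathbf 1})$ with grading $\hat Y_1=\hat Y_{\mathbf 1}$, the triple $(\hat N_1^-,\hat Y_1,\hat N_1^+)$, and a real-analytic $G$-valued function $g_{\mathbf 1}(y)$ (defined for $y$ large) with $e^{iyN}\cdot F=g_{\mathbf 1}(y)\cdot e^{iy\hat N_1^-}\cdot\tilde F_{\mathbf 1}$; Schmid's Hodge-norm estimates give the convergent expansions of $g_{\mathbf 1}(y)^{\pm 1}$ in non-positive powers of $y$ with constant term $1$, and the fact that $g_{\mathbf 1}(y)$ commutes with $\hat Y_1$, i.e.\ is a $(0,0)$-morphism of $(W^{\mathbf 1},\tilde F_{\mathbf 1})$. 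This is exactly (v)--(viii) for $n=1$.

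\textbf{Inductive step.} Assume (v)--(viii) for nilpotent orbits in $n-1$ variables, and construct the filtrations $\tilde F_{\br}$ by descending induction on $r$ as recalled in \Cref{asymptote}. The key structural input, from \cite[(3.3)]{CK82} and the relative-weight-filtration relations of \cite[(1.6.13)]{De74} recalled above, is that $(z_1,\dots,z_{n-1})\mapsto\exp(\sum_{j<n}z_jN_j)\cdot(e^{iN_n}\tilde F_{\bn})$ is again a polarized nilpotent orbit, now in $n-1$ variables, and that its monodromy filtrations, $\RR$-split MHS, gradings, triples and functions $g_{\mathbf 1},\dots,g_{\mathbf{n-1}}$ coincide with those attached to $\theta$ for all indices $r\le n-1$. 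Applying the induction hypothesis to this $(n-1)$-variable orbit therefore yields (v)--(viii) for the indices $1,\dots,n-1$. Incorporating the last variable is done by the one-variable theorem at the top stage, comparing $F$ with its $\RR$-split limit $\tilde F_{\bn}$ along any $N\in C_{\bn}$, which produces $g_{\bn}$; composing the resulting identity with the tail identity of the induction hypothesis gives the product formula (vii). Formula (vi) is then obtained by conjugating $\sum_{s\le r} y_s\hat N_s^-$ by the same product of $g$'s: since $\hat N_j^-$ is by definition the component of $N_j$ in $\bigcap_{r<j}\ker(\ad\,\hat Y_{\br})$, conjugation by $g_{\mathbf j}$ reinstates the discarded components of $N_j$ exactly one stage at a time. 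Property (v) follows because the function delivered at the $r$-th stage of the one-variable theorem commutes with the grading $\hat Y_{\br}$ by construction; together with the commutation of the $\hat Y_{\br}$'s granted above, this shows $g_{\mathbf j}$ commutes with $\hat Y_{\br}$ for every $r\ge j$, hence is a $(0,0)$-morphism of each finer $\RR$-split MHS $(W^{\br},\tilde F_{\br})$, $j\le r\le n$.

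\textbf{The main obstacle.} The genuine difficulty is the uniformity in (viii): each $g_{\br}$ is the output of a one-variable $SL_2$-orbit theorem whose asymptotic behaviour a priori depends on the filtration-parameters frozen from the remaining variables, whereas one needs a region of convergence $C_{\sigma,\eta,\varepsilon}$ independent of those parameters. Establishing this requires re-running Schmid's ODE and Hodge-norm estimates with the frozen data allowed to tend to its split limit, tracking that the coefficients of the expansions stay bounded and the decay rates stay uniform as the ratios $y_1/y_2,\dots,y_{r-1}/y_r,y_r$ all tend to infinity inside $C_{\sigma,\eta,\varepsilon}$. Here the commutativity of the gradings $\hat Y_{\br}$ is precisely what makes the iteration close up: it simultaneously diagonalises $V_\RR$ into the multigraded pieces $V_{l_1,\dots,l_n}$ on which the $j$-th $\mathfrak{sl}_2$-factor of $\rho$ acts, roughly, only through the $j$-th index, so that the stage-by-stage one-variable estimates multiply without interference. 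I expect this uniform-convergence analysis --- rather than the essentially formal identities (v), (vi), (vii) --- to be the crux, exactly as in \cite[\textsection 4]{CKS}, and the most economical route is to quote those estimates verbatim rather than redo them.
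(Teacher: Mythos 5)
There is nothing in the paper for your argument to be compared against: \Cref{multiorbit} is the second half of the ${SL_2}^n$-orbit theorem of Cattani--Kaplan--Schmid, and the paper does not prove it --- it is quoted verbatim from \cite[(4.20)]{CKS} ``for the convenience of the reader'' and then used as a black box in the proof of \Cref{finitely many Siegel} (via \Cref{Un Siegel} and \Cref{Un Siegel Nilp}). So a proof of this statement would have to reproduce the analysis of \cite[\S\S 2--4]{CKS} (or \cite{Ka}, \cite{CKS87}); the paper deliberately does not attempt this.

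As a reconstruction of that analysis, your plan has the right skeleton (descending induction on the number of variables, the one-variable $SL_2$-orbit theorem as base case and engine, commutativity of the $\hat{Y}_{\br}$), but it is not a proof, and the gap is exactly where you place it and then step around it: item (viii) --- the existence of the $g_{\br}$ with expansions in non-positive powers of $y_1/y_2,\dots,y_r$, constant term $1$, convergent uniformly on $C_{\sigma,\eta,\varepsilon}$ --- \emph{is} the quantitative content of the theorem, so ``quoting those estimates verbatim'' from \cite{CKS} is circular: you would be citing the statement you set out to prove. Moreover, the part you call essentially formal is not. The induction hypothesis applied to the $(n-1)$-variable orbit through $e^{iN_n}\tilde{F}_{\bn}$ compares $\exp(i\sum_{j<n}y_jN_j)\cdot e^{iN_n}\tilde{F}_{\bn}$ with its split orbit, whereas (vii) concerns $\exp(i\sum_{j\le n}y_jN_j)\cdot F$; a one-variable comparison $e^{iy_nN_n}\cdot F\approx g_{\bn}(y_n)\,e^{iy_n\hat{N}_n^-}\cdot\tilde{F}_{\bn}$ cannot simply be composed with it, because $g_{\bn}$ does not commute with $\exp(i\sum_{j<n}y_jN_j)$, and it is precisely to absorb this non-commutativity that the functions $g_{\br}$ in (vi)--(vii) must depend on all the ratios $y_1/y_{r+1},\dots,y_r/y_{r+1}$ and be controlled by the ODE and Hodge-norm estimates of \cite[\S 4]{CKS}. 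In other words, (vi) and (vii) are outputs of the same hard estimates as (viii), not formal corollaries of commutativity plus the one-variable theorem. If you intend to treat the result as known, the honest course is the paper's: state it and cite \cite[(4.20)]{CKS}; if you intend to prove it, the uniform asymptotic analysis must actually be carried out.
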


\subsection{Proof of \Cref{finitely many Siegel}}

\begin{proof}[\unskip\nopunct]
Let $R>0$, $\eta >0$ and $\Phi: (\Delta^*)^n\lo S_{\Ga, G, M}$, be as in \Cref{finitely
  many Siegel}. 
For any point $\bz \in \FH$ with $|\Reel \bz| \leq R$ and $\Ima \bz
>\eta$ there exists an ordering $\sigma$ of the variables in
$(\Delta^*)^n$ such that $\bz \in C_{\sigma, \eta, 1}$. As any compact
set is contained in a Siegel set, 
\Cref{finitely many Siegel} follows from \Cref{Un Siegel} below.
\end{proof}

\begin{theor} \label{Un Siegel}
Let $\Phi: (\Delta^*)^n \lo S_{\Ga, G, M}$ be a period
map with unipotent monodromy on a product of punctured disks, with a
given order of variables $\sigma$. 

For any given constants $R>0$, and $\eta >0$ sufficiently large, there exists a Siegel set
$\FS \subset G/M$ such that $\tilde{\Phi}(\bz) \subset  \FS$
whenever $|\Reel \bz| \leq R$ and $\bz \in C_{\sigma, \eta, 1}$. 
\end{theor}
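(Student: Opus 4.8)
The plan is to first replace the period map by a genuine $\SL_2^n$-orbit via the nilpotent orbit theorem, and then to combine the $\SL_2^n$-orbit theorem (\Cref{multiorbit}) with elementary reduction theory to land inside a single Siegel set; essentially all the difficulty is concentrated in controlling the error terms along the cone $C_{\sigma,\eta,1}$. After permuting coordinates we may assume $\sigma$ is the standard ordering, so that $C_{\sigma,\eta,1}=\{\bz=\bx+i\by\in\FH^n\;|\;y_1\geq y_2\geq\cdots\geq y_n\geq\eta\}$, and we set $\Omega:=\{\bz\in C_{\sigma,\eta,1}\;|\;|\Reel\bz|\leq R\}$, so that on $\Omega$ all the $x_j$ are bounded. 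Throughout, ``bounded'' means ``contained in a fixed relatively compact set with relatively compact inverse'', and I keep the notation of \Cref{asymptote}. First I would invoke the nilpotent orbit theorem \cite[(4.12)]{Schmid}: after shrinking the polydisk (that is, enlarging $\eta$), $\tilde\Phi(\bz)=\exp(\sum_j z_j N_j)\cdot\Psi(p(\bz))$ with $\Psi\colon\Delta^n\lo\check D$ holomorphic and $F=\Psi(0)$. Since the $N_j$ commute, $\exp(\sum_j z_j N_j)=u(\bx)\exp(i\sum_j y_j N_j)$ with $u(\bx)=\exp(\sum_j x_j N_j)$, which on $\Omega$ is bounded and lies in a fixed $\QQ$-unipotent subgroup of $\G$; and for $\bz\in\Omega$ the point $p(\bz)$ stays in a fixed compact neighbourhood of $0$, so $\Psi(p(\bz))$ stays in a fixed relatively compact subset of $\check D$. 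Hence, up to a bounded left factor, it suffices to control the nilpotent orbit $\theta(\bz)=\exp(\sum_j z_j N_j)\cdot F$ on $\Omega$.

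The heart of the argument is the passage from $\theta$ to an $\SL_2^n$-orbit. Combining the refined comparison of $\tilde\Phi$ with $\theta$ \cite[(1.15)]{CKS} and \Cref{multiorbit}(v),(vii),(viii), I expect a factorisation, valid for all $\bz\in\Omega$ once $\eta$ is large enough,
\[
\tilde\Phi(\bz)=u(\bx)\cdot\gamma(\bz)\cdot\rho(\mathbf a(\by))\cdot o,
\]
where $\rho\colon\SL_{2,\RR}^n\lo\G_\RR$ is the morphism \eqref{morph}, $o\in D=G/M$ is a fixed base point, $\mathbf a(\by)\in\SL_2(\RR)^n$ is the diagonal torus element attached to $\by$ by the $\SL_2^n$-orbit theorem (so that $\rho(\mathbf a(\by))\cdot o$ is the value of the approximating $\SL_2^n$-orbit at $i\by$), and $\gamma(\bz)\in G$ collects the discrepancy between $\tilde\Phi$ and $\theta$ together with the product $\prod_{r=n}^1 g_\br(y_1/y_{r+1},\dots,y_r/y_{r+1})$ of \Cref{multiorbit}(vii). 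The main obstacle is to prove that $\gamma(\bz)$ stays \emph{bounded} over the whole of $\Omega$: on $C_{\sigma,\eta,1}$ all ratios $y_j/y_{j+1}$ are $\geq1$ and $y_n\geq\eta$, so \Cref{multiorbit}(viii) bounds the $g_\br$ there and the estimates of \cite{CKS} bound the $\tilde\Phi$–$\theta$ discrepancy on regions of this shape, but assembling these into one uniform bound for $\gamma(\bz)$ requires carefully unwinding the asymptotics of \cite[(4.20)]{CKS} along the cone — this is the multivariable counterpart of Schmid's one-variable estimate \cite[Cor.~5.29]{Schmid}, and that the ordering built into $C_{\sigma,\eta,1}$ cannot be dropped is exactly what \cite[Claims~A.5.8 and~A.5.9]{GGLR} show. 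Along the way I would record that $\gamma(\bz)$ preserves every monodromy weight filtration $W^\br$: the $g_\br$ are $(0,0)$-morphisms of the mixed Hodge structures $(W^\br,\tilde F_\br)$ by \Cref{multiorbit}(v), and $\tilde\Phi(\bz)$, $\theta(\bz)$ induce the same $W^\br$; hence $\gamma(\bz)$ lies in $\bP(\RR)$, where $\bP:=\{g\in\G\;|\;g\,W^\br=W^\br\ \text{for}\ 1\leq r\leq n\}$ is a parabolic subgroup (the $\hat Y_\br$ commute) defined over $\QQ$, since each $W^\br=W(C_\br)[-k]$ is rational (the rational element $N_1+\cdots+N_r$ lies in $C_\br$).

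Granting this, the rest is reduction theory. For $\by$ with $y_1\geq\cdots\geq y_n\geq\eta$ the element $\mathbf a(\by)$ lies in a Siegel set of $\SL_2(\RR)^n$ relative to the Borel whose image under $\rho$ is contained in $\bP(\RR)$ (the Borel built from the $\hat N^-_j$ and $\hat Y_j$), with Siegel parameter tending to $\infty$ with $\eta$; by the compatibility of $\rho$ with the reduction theories of $\SL_2^n$ and of $\G$ — in one variable this is the computation behind \cite[Cor.~5.29]{Schmid}, in general a check in the root datum of $\bP$ in which the ordering $y_1\geq\cdots\geq y_n$ is precisely what confines $\rho(\mathbf a(\by))$ to a single chamber — the element $\rho(\mathbf a(\by))$ lies in a Siegel set for $G$ attached to $\bP$. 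It then remains to absorb the two bounded left factors: $u(\bx)\in N_P$ enlarges the unipotent part of the Siegel set, while $\gamma(\bz)\in\bP(\RR)$, written through the Langlands decomposition $P=N_P A_P M_P$ and inserted via the multiplication rule \eqref{mult}, enlarges its compact parts and divides the Siegel parameter by a constant; this yields a single Siegel set for $G$ whose image in $G/M$ contains $\tilde\Phi(\Omega)$, which is the assertion. In summary, the only genuine difficulty is the boundedness in Step~2 — extracting from the $\SL_2^n$-orbit theorem a uniform bound for the correction factor $\gamma(\bz)$ over the entire cone (fixed $R$, large $\eta$), the multivariable analogue of \cite[Cor.~5.29]{Schmid} — whereas Steps~1 and~3 are routine, the one point needing care in Step~3 being the identification of the $\QQ$-parabolic $\bP$ and of the Siegel direction of $\rho(\mathbf a(\by))$.
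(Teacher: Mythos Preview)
Your overall strategy matches the paper's: reduce to the nilpotent orbit, apply the $\SL_2^n$-orbit theorem, identify a $\QQ$-parabolic $\bP$, and land in a Siegel set for $\bP$. But there is one genuine missing ingredient, and it sits exactly at the point you flag as ``needing care'' in Step~3.

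You define $\bP$ as the common stabilizer of the rational weight filtrations $W^\br$; this is correct and coincides with the paper's parabolic. But for the Siegel-set argument you need $\rho(\mathbf a(\by))$ to lie in $A_P$, the identity component of the \emph{$\QQ$-split} center of the Levi of $\bP$. The semisimple elements $\hat Y_\br$ are produced by the (real-analytic) $SL_2$-orbit theorem and are a priori only real, not rational; the real torus they span sits in the center of the real Levi but need not be $\QQ$-split, so $\rho(\mathbf a(\by))$ need not lie in $A_P$ at all, and then its $M_P$-component can be unbounded. The paper handles this by \Cref{rational} (the content of \cite[(3.7.1)]{CDK95}, the multivariable analogue of Schmid's one-variable \cite[(5.17)]{Schmid}): there exists $g\in G$, in fact $g\in N_P$, with $g\rho g^{-1}$ defined over $\QQ$. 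After this conjugation the $\hat Y_\br$ span a $\QQ$-split torus $\bS\subset\bL_P$, hence $\exp(-\tfrac12\sum_j\log s_j\,\hat Y_\bj)\in A_P$, and only then does the ``root-datum check'' you allude to make sense. This rationality step is not a formality and cannot be absorbed into your boundedness of $\gamma(\bz)$.

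Two smaller points. First, your assertion $u(\bx)\in N_P$ requires $N_j\in\Fn_P$ for each $j$; this is \Cref{nil} in the paper and uses that $N_j$ is a $(-1,-1)$-morphism of $(W^\br,\tilde F_\br)$ for $r\geq j$ and has nonpositive $W^\br$-weight for $r<j$. Second, packaging the $\tilde\Phi$--$\theta$ discrepancy into a bounded \emph{left} factor $\gamma(\bz)\in\bP(\RR)$ is problematic: the holomorphic correction $\Psi(p(\bz))$ acts on the \emph{right} of $\exp(i\sum y_jN_j)$, and conjugating it past this polynomial-in-$\by$ element does not obviously keep it bounded (nor does \Cref{multiorbit}(v) give that the full product $\prod_r g_\br$ preserves every $W^\br$). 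The paper avoids this by first proving the Siegel-set statement for the nilpotent orbit alone (\Cref{Un Siegel Nilp}, via the horospherical decomposition and the power-series expansions of \Cref{multiorbit}(viii)) and then transferring it to $\tilde\Phi$ using the $G$-invariant distance estimate of the nilpotent orbit theorem, exactly as in Schmid's passage from \cite[(5.25)]{Schmid} to \cite[(5.26)]{Schmid}.
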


\begin{proof}[Proof of \Cref{Un Siegel}]
When $n=1$ this is proven by Schmid in \cite[(5.29)]{Schmid}. We
adapt his proof for any $n \geq 1$.

\subsubsection{} The first step consists in 
generalizing \cite[(5.17)]{Schmid}, using that the $N_i$'s, $1 \leq i \leq n$, are
rational:

\begin{lem} \label{rational}
There exists a morphism of $\QQ$-groups
$$\psi: {\SL_2}^n \lo \G$$ and an element $g \in G$ such that
$\psi_\RR = g \circ \rho \circ g^{-1}$ (where $\rho$ is defined in \Cref{morph}).
\end{lem}

\begin{proof}
This is the content of \cite[(3.7.1)]{CDK95}, where it is shown that
there exists an element $g \in G$ transporting the multi-grading
$V_{l_{1}, \cdots, l_n}$ of $V_\RR$ splitting all the $W^{\bj}$ to a
rational one. Notice that Cattani, Deligne and Kaplan argue with the
multi-grading obtained from the commuting semi-simple elements
$(\hat{Y}_j)_{1\leq j \leq n}$ (see \cite[(3.5.2)]{CDK95}) rather than
the one we use obtained from the $(\hat{Y}_{\bj})_{1\leq j \leq n}$
but this is clearly equivalent in view of \Cref{relation}.
\end{proof}

{\em From now on, and in order to keep the notations reasonable, we
replace $\hat{N}_j^{-}$, $\hat{Y}_j$, etc ... by $\Ad g (\hat{N}_j^-)$, $\Ad g
(\hat{Y}_j)$, 
etc ... so that the $SL_2$-triplets $(\hat{N}_j^{-}, \hat{Y}_j,
\hat{N}_{j}^+)$ belongs to $\Fg_\QQ$}.
With these new notations Theorem \cite[(4.20)]{CKS}
remains true if one performs the obvious notational modifications. In particular
\cite[(4.20)(viii)]{CKS} becomes:
$$
\exp(i \sum_{j=1}^n y_j N_j) \cdot F = g_\sigma(\bss) 
  g^{-1} \cdot \exp( i \sum_{j=1}^n y_j \hat{N}_j^-) \cdot
  g\tilde{F}_\bn\;\;,
$$
where $s_1 := y_1/y_2$, ... , $s_{n-1}:= y_{n-1}/y_n$, $s_n=
y_n$, $\bss:= (s_1, \cdots, s_n)$ and $g_\sigma(\bss):= (\prod_{r=n}^1
  (g_\br(y_{1}/y_{r+1}, \cdots, y_{j}/y_{r+1})))$. 
In view of \cite[(3.12)]{CKS} and the relation $e^{i
  \hat{N}_\br^-} \tilde{F}_\br = e^{i N_1}\tilde{F}_{\mathbf 1}$, see \cite[(4.20)(i)]{CKS},
this equality becomes
\begin{equation} \label{approx}
\exp(i \sum_{j=1}^n y_j N_j) \cdot F = g_\sigma(s) g^{-1} \cdot \exp(-
\frac{1}{2} \sum_{j=1}^n \log s_j \hat{Y}_{\bj}) \cdot g e^{i N_1}
\tilde{F}_{\mathbf{1}} \;\;.
\end{equation}

\subsubsection{} Using \Cref{rational} we can associate a
rational parabolic $\bP$ of $\G$ to our situation as follows.

\sspace
Let $\Fh:= \psi_* ({\mathfrak{sl}_2}^n) \subset \Fg_\QQ$ be the Lie algebra of the image
$\HH:= \psi({\SL_2}^n) \subset \G$. We denote by $\Fs \subset \Fh$ its
toral subalgebra generated by the semi-simple elements $\hat{Y}_j$'s.  

\begin{lem}
Let $s$ be the dimension of $\Fs$. There exist $1\leq i_1< i_2 <\ldots<
i_s\leq n$ such that $\hat{Y}_{r} =0$ for $r
\not = i_j$ for some $1\leq j \leq s$, and $\Fh \simeq
{\mathfrak{sl}_2}^s$ is the product of the $\mathfrak{sl}_2$-triples $(\hat{N}_{i_{j}}^-, \hat{Y}_{i_{j}},
\hat{N}_{i_{j}}^+)$, $1 \leq j \leq s$.
\end{lem}

\begin{proof}

Suppose that $(\hat{Y}_{i_{j}})_{1\leq j \leq s_l}$ is a
  maximal set of linearly independent vectors among the $\hat{Y}_k$'s,
  $1 \leq k \leq l$.  As $\hat{Y}_{i_{l+1}}$ has to commute with all $\hat{N}_{i_{j}}^-$ for
$1\leq j \leq s_l$ and $[\hat{Y}_{i_{j}}, \hat{N}_{i_{k}}]= -2 \delta_{jk}
\hat{N}_{i_{j}}$, either $\hat{Y}_j$ is linearly independent from the
$(\hat{Y}_{i_{j}})_{1\leq j \leq s_l}$'s, or $\hat{Y}_j=0$
(corresponding to the case $W^j = W^{j+1}$). It follows immediately by
induction on $l$ that there exists $1\leq i_1< i_2 <\ldots<
i_s\leq n$ such that $\hat{Y}_{r} =0$ for $r
\not = i_j$ for some $1\leq j \leq s$; that $\Fs =
\oplus_{j+1}^{s} \QQ \cdot \hat{Y}_{i_{j}}$; and that $\Fh \simeq
{\mathfrak{sl}_2}^s$ is the product of the $\mathfrak{sl}_2$-triples $(\hat{N}_{i_{j}}^-, \hat{Y}_{i_{j}},
\hat{N}_{i_{j}}^+)$, $1 \leq j \leq s$.
\end{proof}

Let $\bS \subset \G$ be the $\QQ$-split torus with Lie algebra
$\Fs$. The basis $(\hat{Y}_{\mathbf{i_{j}}})_{1 \leq j \leq s}$
defines a total order $>$ on $X^*(\bS)$ as follows: a character $\lambda
\in X^*(\bS)$ is positive if its differential 
$d\lambda \in \Fs^*$ is non-zero and satisfies:
\begin{equation} \label{pos}
\forall \, 1\leq j \leq s, \quad d \lambda(\hat{Y}_{\mathbf{i_{j}}})
\leq 0\;\;.
\end{equation}
Notice that~(\ref{pos}) is equivalent to 
\begin{equation} \label{pos1}
\forall \, 1\leq r \leq n, \quad d \lambda(\hat{Y}_{\br})
\leq 0\;\;.
\end{equation}

Let $\Phi^+(\SS, \G) \subset \Phi(\SS, \G)$ be the set of
positive roots for $>$ and let $\bP \subset \G$ be the associated
parabolic subgroup, see \cite[Theor. 4.15]{BT}. Its Levi subgroup is $\LL\subset
\G$, the centralizer in $\G$ of $\SS$. Notice that the Lie algebra
$\Fn_\PP \subset \Fp$ of the unipotent radical $\bN_P \subset \bP$ is the
set of elements in $\Fg$ which belongs to $\bigcap_{r=1}^n W^\br_{\geq
  0}\, \Fg$ and to $W^\br_{>0} \, \Fg$ for at least one index $r$ (where  $W^\br
\Fg$ is the weight filtration induced on $\Fg \subset \End V$ by 
the filtration $W^\br$ on $V$). 

\begin{lem}
The element $g$ in \Cref{rational} belongs to $N_P$.
\end{lem}

\begin{proof}
By definition $g$ preserves $W^\br$, and induces the identity on
$\Gr^{W^{\br}}$, for all $1 \leq r \leq n$. Hence the result.
\end{proof}

\subsubsection{} The second step in the proof of \Cref{Un Siegel}
consists in generalizing \cite[(5.25)]{Schmid}, stating that \Cref{Un
  Siegel} holds true if one replace the local period map by its
approximating nilpotent orbit:

\begin{lem} \label{Un Siegel Nilp}
For any given constants $R>0$, and $\eta >0$ sufficiently large, there exists a Siegel set
$\FS \subset G/M$ such that $\exp(i \sum_{i=1}^n z_i N_i) \cdot F \subset  \FS$
whenever $|\Reel \bz| \leq R$ and $\bz \in C_{\sigma, \eta, 1}$. 
\end{lem}

\begin{proof}
We denote by $K$ the unique maximal subgroup of $G$ containing the stabilizer in $G$ of $e^{i N_1}
\tilde{F}_{\mathbf{1}} \in D$. 
The horospherical decomposition $G= N_P A_P (M_PK)$ implies that there
exists unique real analytic functions $n(s)$, $a(s)$, and $m(s)$ on $\RR_{>0}^n$,
taking value in $N_P$, $A_P$ and $M_PK$ respectively, such that 
\begin{equation} \label{horospherical}
 g_\sigma(s) g^{-1} \cdot \exp(-
\frac{1}{2} \sum_{j=1}^n \log s_j \hat{Y}_{\bj})  = n(s) a(s) m(s) \;\;.
\end{equation}

It follows from~\ref{approx} that 
\begin{equation} \label{approx2}
\exp(i \sum_{j=1}^n z_j N_j) \cdot F = (\exp (\sum_{i=1}^n x_i N_i) \,
n(s)) \cdot a(s) \cdot m(s) \cdot  (g e^{i N_1}\tilde{F}_{\mathbf{1}})\;\;.
\end{equation}

By \cite[(4.20) (viii)]{CKS} the functions $n(s)$, $a(s)$, and $m(s)$
have power series expansions in non positive powers of the
$(s_j)^{1/2}$, $1 \leq i \leq n$, which are convergent on the domain
$C_{\sigma, \eta, 1}$ for $\eta$ sufficiently large, and such that
$n(\infty) = g^{-1}$, $\lim_{s \mapsto \infty} a(s) \exp (\frac{1}{2}
\sum_{j=1}^n \log s_j \hat{Y}_{\bj}) = 1$ and $m(\infty)
=1$. This proves that there exists a Siegel set $\FS \subset G/M$ such that
$\exp(i \sum_{i=1}^n y_i N_i) \cdot F \subset  \FS$ 
whenever $\by \in C_{\sigma, \eta,1}$ with
$\eta$ sufficiently large.

Moreover, it follows from \Cref{nil} below that $\sum_{i=1}^n x_i
N_i$ describes a compact set of $N_P$ when $|\bx| \leq R$. Enlarging
the $U_P$-component of $\FS$ is necessary, it follows that
$\exp(i \sum_{i=1}^n z_i N_i) \cdot F \subset  \FS$ 
whenever $|\Reel \bz| \leq R$ and $\bz \in C_{\sigma, \eta, 1}$ for
$\eta$ sufficiently large. Hence the result.
\end{proof}

\begin{lem} \label{nil}
The element $N_j \in \Fg_\QQ$, $1\leq j \leq n$, belong to the nilpotent radical
$\Fn_\PP$ of the parabolic Lie algebra $\Fp$.
\end{lem}

\begin{proof}
For $j \leq r$  the element $N_j$ is a $(-1, -1)$-morphism of $(W^\br,
\tilde{F}_\br)$, hence $N_j \in W^\br_{-2} \Fg$ (in fact $N_j$ is homogeneous of weight $(-2)$ relative to
$V^r$, i.e. $[\hat{Y}_\br, N_j] = -2 N_j$, see \cite[proof of
(4.42)]{CKS} and \cite[(2.19)(i)]{CKS87}).

For $1 \leq r \leq j-1$, it follows from \cite[(4.20)(vi)]{CKS} or
\cite[(2.19) (ii) and (iv)]{CKS87} that $N_j \in W^\br_{\leq 0} \Fg$.

Hence $N_j$ belongs to $\bigcap_{r=1}^n W^\br_{\leq 0}\Fg$, which
finishes the proof.

\end{proof}

\subsubsection{} Finally, the nilpotent orbit
theorem in many variables \cite[(4.12)]{Schmid} (refined in
\cite[(1.15)]{CKS}) states that for any $G$-invariant distance $d$ on $D$
there exists non-negative constants $\eta$, $\beta$ and $K$ such
that, for $\bz \in C_{\sigma, \eta,1}$, 
\begin{equation}
 d(\tilde{\Phi}(\bz), \exp(i \sum_{j=1}^n z_j N_j) \cdot F) \leq K
 \cdot \sum_{j=1}^n (\Ima z_j)^\beta \exp(-2 \pi y_j) \;\;.
\end{equation} 

\Cref{Un Siegel} for the local period map $\tilde{\Phi}$ follows from
the analogous result \Cref{Un Siegel Nilp} for its
approximating nilpotent orbit $\exp(i \sum_{j=1}^n z_j N_j) \cdot F$
exactly as in the deduction of \cite[(5.26)]{Schmid} from
\cite[(5.25)]{Schmid} in one variable. 

\end{proof}

\subsection{\Cref{definability} implies Borel's algebraicity theorem} \label{algebraic}

\begin{theor}\cite[Theor. 3.10]{Bor72}
Let $S$ be a complex algebraic variety and $f: S \lo S_{\Ga, G, K}$ a
complex analytic map to an arithmetic variety $S_{\Ga, G, K}$. Then
$f$ is algebraic.
\end{theor}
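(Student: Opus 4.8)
The plan is to run the argument of \Cref{definability period map} in the (much simpler) Hermitian setting and then to invoke Peterzil--Starchenko's o-minimal GAGA theorem. Since $\cD := G/K$ is a Hermitian symmetric domain, $X := S_{\Ga, G, K} = \Ga\backslash\cD$ is a smooth quasi-projective complex variety; it carries both its canonical $\RR_{\alg}$-definable structure (from the algebraic structure, compatible with the projective Baily--Borel compactification $\ol X^{BB}$) and the $\RR_{\an}$-definable structure of \Cref{definability}(1) coming from Borel--Serre, and these induce one and the same $\RR_{\an,\exp}$-definable structure on $X$ (the comparison recalled in the introduction; alternatively one uses from the outset the Baily--Borel form of \Cref{definability}(2), i.e.\ \cite{KUY}). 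Likewise $S$ carries its canonical $\RR_{\alg}$-definable structure. Granting for the moment that $f\colon S \lo X$ is $\RR_{\an,\exp}$-definable, its graph is an $\RR_{\an,\exp}$-definable closed complex analytic subvariety of the complex algebraic variety $S \times X$, hence algebraic by Peterzil--Starchenko's o-minimal Chow theorem (\Cref{PS}); projecting, $f$ is algebraic. So the whole content is the $\RR_{\an,\exp}$-definability of $f$.

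First I would localise, exactly as in the reduction of \Cref{definability period map} to \Cref{polydisc}: choose a smooth compactification $\ol S \supset S$ with normal crossing boundary and a finite cover of $\ol S$ by holomorphic polydisc charts $(\ol S_i, S_i) \cong (\Delta^n, (\Delta^*)^{r_i}\times\Delta^{l_i})$; since $\ol S$ is compact these charts are $\RR_{\an}$-definable (cf.\ \Cref{ManifoldWC}), so it suffices to prove $f_{|S_i}$ is $\RR_{\an,\exp}$-definable for each $i$, and adding trivial factors we may assume $S_i = (\Delta^*)^n$. Next, the local monodromies $T_1,\ldots,T_n \in \Ga$ of $f$ around the boundary divisors are quasi-unipotent: this is Borel's observation, which follows from the Schwarz--Pick lemma, since a holomorphic map $\Delta^* \lo \Ga\backslash\cD$ is distance-decreasing for the Poincaré metric on $\Delta^*$ and the complete, negatively curved Bergman metric on $\Ga\backslash\cD$, so images of small loops around a puncture have length tending to $0$, which together with the discreteness and torsion-freeness of $\Ga$ excludes semisimple and elliptic monodromy. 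This is the soft part of \cite{Bor72}, independent of Borel's extension theorem. Replacing $S$ by a finite étale cover — which affects neither hypothesis nor conclusion, by descent for algebraicity — we may assume every $T_i$ is unipotent, with logarithm $N_i \in \Fg_\QQ$ nilpotent.

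Then I would lift, as in the proof of \Cref{polydisc}. Write $p = \exp(2\pi i\cdot)^n \colon \FH^n \lo (\Delta^*)^n$ and $\tilde f\colon \FH^n \lo \cD$ for the lift of $f$ to the universal cover; as $p_{|\FS_\FH^n}$ is $\RR_{\an,\exp}$-definable, it is enough to show $\pi\circ\tilde f_{|\FS_\FH^n}\colon \FS_\FH^n \lo X$ is $\RR_{\an,\exp}$-definable. Here the two Hodge-theoretic inputs of \Cref{definability period map} are replaced by their classical counterparts for Hermitian domains, both elementary consequences of the description of $\cD$ near a rational boundary component as a Siegel domain of the third kind (see \cite{AMRT}): (a) the nilpotent orbit theorem, $\tilde f(\bz) = \exp(\sum_j z_j N_j)\cdot\Psi(p(\bz))$ near the boundary for some holomorphic $\Psi\colon \Delta^n \lo \check{D}$ into the compact dual $\check{D}\supset\cD$ — this makes $\tilde f_{|\FS_\FH^n}$ $\RR_{\an,\exp}$-definable exactly as in \Cref{l1}, since $\exp(\sum_j z_j N_j)$ is polynomial in $\bz$ and acts algebraically on $\check{D}$; and (b) the statement that $\tilde f(\FS_\FH^n)$ is contained in finitely many Siegel sets of $\cD$, the Hermitian case of \Cref{finitely many Siegel}. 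Given (a) and (b), $\pi\circ\tilde f_{|\FS_\FH^n}$ is the union, over the finite index set, of the compositions of the $\RR_{\an,\exp}$-definable map $\tilde f_{|\FS_\FH^n}$ with the maps $\pi_{|\FS_i}$, which are $\RR_{\an}$-definable by \Cref{definability}(2); hence it is $\RR_{\an,\exp}$-definable, and therefore so is $f$.

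The only genuinely non-formal ingredients are the asymptotic statements (a) and (b); in the Hermitian case at hand both are classical facts about the boundary geometry of bounded symmetric domains (Borel, \cite{AMRT}), and are considerably softer than Borel's extension theorem to the Baily--Borel compactification — the point of the argument being precisely that o-minimal GAGA lets one dispense with that extension theorem and with any analysis of $\ol X^{BB}$. The main remaining chore I expect is the bookkeeping comparison of the Borel--Serre $\RR_{\an}$-structure on $X$ with its algebraic structure at the level of $\RR_{\an,\exp}$-definability, needed so that \Cref{PS} applies; as noted, one may instead work throughout with the Baily--Borel structure via \cite{KUY}, in which case even this disappears.
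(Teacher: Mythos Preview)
Your approach is correct and is essentially the paper's --- show $\RR_{\an,\exp}$-definability of $f$, then apply Peterzil--Starchenko --- but the paper's proof is two lines: since $\cD=G/K$ is Hermitian symmetric, there is no Griffiths transversality constraint, so \emph{any} holomorphic map $f\colon S\to S_{\Ga,G,K}$ is automatically a period map, and \Cref{definability period map} applies verbatim to give definability; then \Cref{PS} finishes. You instead unpack and re-derive the proof of \Cref{definability period map} in the Hermitian case, arguing that the two inputs (nilpotent orbit theorem and \Cref{finitely many Siegel}) are classical there. That is a legitimate and somewhat more self-contained route, but it is not needed once \Cref{definability period map} is already in hand.

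One point you flag that the paper treats lightly: the application of \Cref{PS} needs the $\RR_{\an,\exp}$-definable structure on $S_{\Ga,G,K}$ coming from Borel--Serre (used in \Cref{definability period map}) to agree with the one coming from the algebraic (Baily--Borel) structure. The paper asserts this compatibility in the introduction (before \Cref{Ex}) but explicitly declines to prove it; so strictly speaking the paper's short proof relies on that unproven statement, or equivalently on \cite{KUY}. Your explicit mention of this, and the suggested workaround via \cite{KUY}, is to the point.
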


\begin{proof}
The map $f$ is a period map, hence is $\RR_{\an, \exp}$-definable by
\Cref{definability period map}. The graph of $f$ is thus a complex
analytic, $\RR_{\an, \exp}$-definable, subset of the smooth complex
algebraic manifold $S \times S_{\Ga, G, K}$. Recall the
following o-minimal GAGA theorem of Peterzil-Starchenko \cite[Theor. 4.4
and Corollary 4.5]{PS}, generalizing a result of
Fortuna-\L ojasiewicz \cite{FL} in the semi-algebraic case:

\begin{theor}(Peterzil-Starchenko) \label{PS}
Let $S$ be a smooth complex algebraic manifold (hence endowing the
$\CC$-analytic manifold $S$ with a canonical $\RR_\alg$-definable
manifold structure). Let $W \subset S$ be a complex analytic
subset which is also an $\cS$-definable subset for some o-minimal
structure $\cS$ expanding $\RR_{\an}$. Then $W$ is an algebraic subset
of $S$.
\end{theor}

It follows that the graph of $f$ is an algebraic subvariety of $S \times S_{\Ga, G, K}$, hence
that $f$ is algebraic (see \cite[Prop. 8]{Serre}).
\end{proof}

\section{Algebraicity of Hodge loci: proof of \Cref{algebraicity}}

\begin{proof}[\unskip\nopunct]
We refer to \cite[Section 3.1]{K17} for the notions of (connected)
Hodge datum and morphism of (connected) Hodge data, connected Hodge varieties and Hodge
morphisms of connected Hodge varieties. Notice that any connected
Hodge variety is in particular an arithmetic quotient and that any Hodge morphism of
connected Hodge varieties is in particular a morphism of arithmetic
quotients. 

A special subvariety $Y$ of the connected Hodge variety $S_{\Ga, G, M}$ is by
definition the image $Y:= f(S_{\Ga', G', M'})$ of some Hodge morphism $f: S_{\Ga', G', M'} \lo
S_{\Ga, G, M}$. It follows from \Cref{definability}(3) and the remark
above that any special subvariety of $S_{\Ga, G, M}$ is an $\RR_{\an, \exp}$-definable subset of
$S_{\Ga, G, M}$ (endowed with its $\RR_\an$-structure of
\Cref{definability}(1). The Hodge locus $\HL(S_{\Ga, G, M})$ is defined as
the (countable) union of special subvarieties of $S_{\Ga, G, M}$.

The Hodge locus $\HL(S, \VV)$ co\"incides with the preimage $\Phi_S^{-1}
(\HL(S_{\Ga, G, M}))$. Hence to prove \Cref{algebraicity} we are
reduced to proving that the preimage $W:= \Phi^{-1}(Y)$ of any special subvariety $Y
\subset S_{\Ga, G, M}$ is an algebraic subvariety of $S$. By \Cref{definability} the period map $\Phi_S: S \lo  S_{\Ga, G, M}$
is $\RR_{\an, \exp}$-definable. As $Y \subset S_{\Ga, G, M}$ is an $\RR_{\an, \exp}$-definable subset of
$S_{\Ga, G, M}$ it follows that $W=\Phi_S^{-1}(Y)$ is
an $\RR_{\an, \exp}$-definable subset of $S$ (in particular has
finitely many connected components). As $W$ is also a complex analytic
subvariety, the o-minimal GAGA \Cref{PS} of Peterzil-Starchenko
implies that $W$ is an algebraic subvariety of $S$,
which finishes the proof of \Cref{algebraicity}.

\end{proof}

\sspace
\noindent Bruno Klingler : Humboldt Universit\"at zu Berlin

\noindent email : \texttt{bruno.klingler@hu-berlin.de}.

\end{document}